\newtheorem{theorem}{Theorem}
\newtheorem{remark}{Remark}
\newtheorem{lemma}{Lemma}
\newtheorem{proposition}{Proposition}
\newcommand\grad{\operatorname{grad}}
\renewcommand\div{\operatorname{div}}
\newcommand\curl{\operatorname{curl}}
\newcommand\bu{\bm{u}}
\newcommand\bv{\bm{v}}
\newcommand \figurepath{./figures/}
\newcommand \moviepath{./movie/}
\newenvironment{customthm}[1]
  {\innercustomthm}
  {\endinnercustomthm}
\author[K. Hu]{Kaibo Hu}
\address{School of Mathematics, University of Minnesota,
 206 Church St. SE,
Minneapolis, MN 55455-0488,
USA}
\email{kaibo.hu@maths.ox.ac.uk}
\author[Y.-J. Lee]{Young-Ju Lee}
\address{Department of Mathematics, Texas State University, TX, USA}
\email{yjlee@txstate.edu}
\author[J. Xu]{Jinchao Xu}
\address{Department of Mathematics, Pennsylvania State University, University Park, PA 16802, USA}
\email{xu@math.psu.edu}
\begin{document}
\title[Helicity-conservative FEM for MHD]{Helicity-conservative finite element discretization for incompressible MHD systems}

\maketitle

\begin{abstract}
We construct finite element methods for the incompressible magnetohydrodynamics (MHD) system that precisely preserve magnetic and cross helicity, the energy law and the magnetic Gauss law at the discrete level.  The variables are discretized as discrete differential forms  in a de~Rham complex. We present numerical tests to show the performance of the algorithm. 
\end{abstract}

\smallskip
\noindent \textbf{Keywords:} magnetohydrodynamics, helicity, divergence-free, structure-preserving, finite element.




\section{Introduction} 

Numerical simulation for the incompressible magnetohydrodynamics (MHD) system is important in plasma physics. There have been a lot of efforts in designing stable and efficient numerical methods for solving the MHD equations. 

The MHD system has various conserved quantities.
Among them, the energy law and the magnetic Gauss law ($\nabla\cdot \bm{B}=0$) have been proved crucial both for the MHD physics and for computation, c.f., \cite{brackbill1980effect}. Moreover, topology of magnetic and fluid fields plays an important role in many applications of MHD. The linking and knot structures of the magnetic field are rearranged in magnetic reconnection and this fact leads to a number of consequences in physics.  The helicity of a divergence-free vector field, which is conserved in non-dissipative systems (ideal flows), is a standard measure for the extent to which the field lines wrap and coil around one another \cite{cantarella1999influence}. Fluid and MHD helicity is known to be important in the turbulence regime as discussed in, for example, \cite{frisch1975possibility,perez2009role}. 
Helicity also provides a local lower bound for the energy \cite[p. 122]{arnold1999topological}, i.e., a topological obstruction of energy relaxation.
 We refer to \cite{arnold1999topological,berger1984topological,moffatt1981some,moffatt1992helicity,moffatt2014helicity} and the references therein for more discussions on MHD helicity, and \cite{smiet2017knots} for discussions on knots in plasma physics.  

In many algorithms, these conservation laws are only approximated up to a discretization error, rather than exactly conserved. These approximation errors may then pollute the solution with unphysical behavior.
It is therefore of great interest to construct numerical methods that precisely preserve the helicity up to the machine precision, together with other conservative quantities including the energy and the magnetic Gauss law. These conservation laws are related to each other. For example, to obtain well defined magnetic helicity $\int \bm{A}_{h}\cdot \bm{B}_{h}\, \mathrm{d}x$ at the discrete level, where $\bm{A}_{h}$ is any magnetic potential of $\bm{B}_{h}$ satisfying $\nabla\times \bm{A}_{h}=\bm{B}_{h}$, the discrete magnetic field $\bm{B}_{h}$ has to be precisely divergence-free.  As a consequence, it is necessary to use algorithms preserving the magnetic Gauss law. 

While it is very difficult to provide a complete review of the huge literature on MHD simulations, let us only mention some early work on finite element methods \cite{gunzburger1991existence,schotzau2004mixed}, and recent work on finite element methods that preserve the energy law and the magnetic Gauss law at the discrete level \cite{hiptmair2018fully,hiptmair2018splitting,hu2014stable}.
    However, neither magnetic nor cross helicity is preserved in most, if not all, of these finite element schemes even in a semi-discretization with continuous time. We refer to Section \ref{helicity-polution} below and \cite{phdthesis} for an analysis of the artificial helicity pollution in one of these schemes (note that many schemes do not preserve the precise divergence-free condition of the magnetic field, so the helicity is even not defined for the numerical solutions).
  

On the other hand, we mention some existing efforts on helicity-preserving schemes. Liu and Wang \cite{liu2004energy} studied helicity-preserving finite difference methods for axisymmetric Navier-Stokes and MHD flows.  Rebholz \cite{layton2008helicity,rebholz2007energy} constructed energy- and helicity-preserving finite element methods for the Navier-Stokes equations.  See \cite{olshanskii2010note} for further discussions on, e.g.,  turbulence models. Kraus and Maj \cite{kraus2017variational} studied helicity-preserving schemes for the MHD system based on discrete exterior calculus.   To the best of our knowledge, however, it remains open to construct helicity-preserving finite element methods for MHD.

The goal of this paper is to construct finite element methods for the incompressible MHD system preserving the energy, the magnetic Gauss law and the magnetic and cross helicity at the discrete level in the ideal MHD limit.  
 Actually, the proposed scheme preserves the local helicity as well, meaning that the helicity is conserved on any subdomain of $\Omega$ if the field has certain vanishing conditions on its boundary. This in turn provides a lower bound for the local discrete energy, i.e., a topological obstruction of energy relaxation at the discrete level. As demonstrated by the numerical results, the proposed algorithm also shows a significant improvement on the approximation of helicity even for the resistive incompressible MHD models, compared to popular schemes that are not constructed with an emphasis on structure-preservation. We remark that a slightly different version of the local helicity is defined by the Woltjer invariant, 
 $$
 \mathcal{W}_{U}:=\int_{\phi_{t}(U)}\bm{A}\cdot \bm{B}\, dx,
 $$
 where $\phi_{t}$ is the flow of the velocity vector field, $U$ is any subdomain of $\Omega$ and $\bm{B}\cdot \bm{n}_{\partial U}=0$ on $\partial U$. In this paper, we only consider fixed domains and will not deal with the Woltjer invariant where the domain is dragged by the flow.

To preserve several conserved quantities at the discrete level, we adopt a discrete differential form point of view. 
Comparing to recent efforts \cite{gawlik2019variational,gawlik2020conservative} on structure-preserving discretization for the fluid mechanics  with $H(\div)$-conforming velocity, we use an $H(\curl)$-based formulation for the fluid part to preserve the helicity. A similar discretization for the Navier-Stokes equations based on the N\'ed\'elec edge element can be found in \cite{girault1990curl}.  However, helicity-preservation was not addressed there.
Comparing with existing works for the MHD discretization, e.g., \cite{hiptmair2018fully,hu2014stable}, we introduce the discrete Hodge dual ($L^{2}$ projections) of the magnetic field and the vorticity as independent variables in proper spaces. 
As a summary, we use $(\bu_{h}, \bm{\omega}_{h}, \bm{j}_{h}, \bm{E}_{h}, \bm{H}_{h}, \bm{B}_{h}, P_{h}) \in  [H^{h}_{0}({{\curl}},\Omega)]^5 \times H^{h}_{0}(\div,\Omega) \times H_0^h(\rm grad)$ as variables, where $\bu_{h}$ is the velocity, $\bm{\omega}_{h}$ is the vorticity, $\bm{j}_{h}$ is the current density, $\bm{E}_{h}$ is the electric field, $\bm{H}_{h}$ is the magnetizing field, $\bm{B}_{h}$ is the magnetic field, and $P_{h}$ is the total pressure, respectively. Here we introduce the vorticity variable $\bm{\omega}_{h}\in H_{0}^{h}(\curl, \Omega)$ independent of $\nabla\times \bm{u}_{h}\in H_{0}^{h}(\div, \Omega)$ and the magnetic variable $\bm{H}_{h}\in H_{0}^{h}(\curl, \Omega)$ independent of $\bm{B}_{h}\in H_{0}^{h}(\div, \Omega)$. Together with carefully designed discrete variational forms (\eqref{main:eq3d} below), these choices of unknowns and spaces are the key of the helicity conservation.  The algorithm is valid for unstructured meshes on general domains and can be of arbitrary order in the framework of the finite element exterior calculus (FEEC) \cite{arnold2018finite,Arnold.D;Falk.R;Winther.R.2006a}.

 The resulting system has more variables than the scheme in, e.g., \cite{hu2014stable}, but is still easy to solve. In the numerical tests, we use iterative methods to solve the algebraic systems.

The rest of the paper is organized as follows. In Section \ref{sec:preliminary} we provide preliminaries and notation. In Section \ref{sec:alg} we present our algorithm that preserves the helicity. 
In Section \ref{sec:num}, we present numerical results on the convergence and helicity-preserving properties of our algorithms. In Section \ref{sec:con}, we give some concluding remarks. In the Appendix, we show the well-posedness of the scheme presented in Section \ref{sec:alg}.

\section{Preliminaries} \label{sec:preliminary}

\subsection{Notation}

Helicity can be defined on 3D contractible domains.  One can extend the definition to nontrivial topology and different space dimensions  \cite[Chapter 3]{arnold1999topological}. However, for simplicity of presentation, in this paper we assume that $\Omega\subset \mathbb{R}^{3}$ is a contractible bounded Lipschitz domain. 

We use the standard notation for the inner product and the norm of the
{$L^{2}$} space
$$
(u,v):=\int_{\Omega}u\cdot v\, {d}x,\quad
\|u\|:=\left(\int_{\Omega} \lvert u\rvert^2\, {d}x\right)^{1/2}.
$$
 Define the following $H(D,\Omega)$ space with a given linear operator {$D$}, which is either $\grad$, $\curl$, or $\div$:
$$
H(D,\Omega):=\{v\in L^2(\Omega), Dv\in L^2(\Omega)\},  
$$
and
$$
H_0(D,\Omega):=\{v\in H(D, \Omega), t_{D}v=0 \mbox{ on } \partial\Omega\},
$$
where $t_{D}$ is the trace operator:
$$
t_{D}v:=
\left\{
  \begin{array}{cc}
    v, & D=\mathrm{grad},\\
    v\times n, & D=\mathrm{curl},\\
    v\cdot n, & D=\mathrm{div}.
  \end{array}
\right.
$$
We also define:
$$
L^2_0(\Omega):=\left\{v\in L^2(\Omega):  \int_\Omega v\, dx=0 \right\}.
$$
By definition, $H_0(\mathrm{grad}, \Omega)$ coincides with $H^1_0(\Omega)$.

The de~Rham complex in three space dimensions with vanishing boundary conditions reads:
 \begin{equation}\label{sequence3-0}
\begin{tikzcd}
0 \arrow{r}  \arrow{r} &H_{0}({\grad}, \Omega)  \arrow{r}{\grad}& {H}_{0}(\curl, \Omega)\arrow{r}{\curl} &{H}_{0}(\mathrm{div}, \Omega)\arrow{r}{\div}& L_{0}^{2}(\Omega) \arrow{r}{} & 0
 \end{tikzcd}
\end{equation}
The sequence \eqref{sequence3-0} is exact on contractible domains, meaning that  $\mathcal{N}(\curl)=\mathcal{R}(\grad)$ and $\mathcal{N}(\div)=\mathcal{R}(\curl)$, where $\mathcal{N}$ and $\mathcal{R}$ denote the kernel and range of an operator, respectively. 

The main idea of the discrete differential forms \cite{arnold2018finite,Arnold.D;Falk.R;Winther.R.2006a} or the finite element exterior calculus \cite{Bossavit.A.1998a,Hiptmair.R.2002a} is to construct finite element discretizations for the spaces in \eqref{sequence3-0} such that they fit into a discrete sequence: 
\begin{equation*}\label{sequence3-0-h}
{\begin{tikzcd}
0 \arrow{r}  &H_{0}^{h}({\grad}, \Omega)  \arrow{r}{\grad}& {H}_{0}^{h}(\curl, \Omega)\arrow{r}{\curl} &{H}^{h}_{0}(\mathrm{div}, \Omega)\arrow{r}{\div}& L_{0}^{2, h}(\Omega) \arrow{r}{} & 0
 \end{tikzcd}} 
\end{equation*}
The discrete de~Rham sequences can be  of arbitrary order \cite{Arnold.D;Falk.R;Winther.R.2006a,Boffi.D;Brezzi.F;Fortin.M.2013a}. 
We use $\mathbb{Q}_h^{\rm curl}$, the $L^2$ projection to $H_0^h({\curl},\Omega )$: 
\begin{equation}
\mathbb{Q}^{\rm curl}_h: \left [ L^{2}(\Omega)\right ]^{3}\to H^{h}_{0}({{\curl}},\Omega),
\end{equation} 
and the discrete curl operator $\nabla_h \times : H^h_0({\rm div}, \Omega) \to H_0^h({{\curl}}, \Omega)$ defined by the following relation: 
\begin{equation}
(\nabla_h \times \bm{U}, \bm{V}) = (\bm{U}, \nabla \times \bm{V}), \quad \forall (\bm{U},\bm{V}) \in H_0^h({\rm div},\Omega) \times H_0^h({{\curl}},\Omega).  
\end{equation} 
We shall also use the $L^2$ adjoint operator of the discrete gradient, i.e., the discrete divergence operator $\nabla_{h} \cdot : H^{h}_{0}(\curl, \Omega)\to H^{h}_{0}(\grad, \Omega)$ defined by 
\begin{equation}
(\nabla_{h}\cdot \bm{v}_{h}, \phi_{h}):=-(\bm{v}_{h}, \nabla \phi_{h}), \quad \forall (\bm{v}_{h}, \phi_{h}) \in H^{h}_{0}(\curl, \Omega)\times H^{h}_{0}(\grad, \Omega). 
\end{equation} 

Let $V_{h}$ be any subspace of $L^{2}$ and $\mathbb{P}_{V_{h}}: L^{2}\to V_{h}$ be the $L^{2}$ projection. Then we will frequently use the following property: 
\begin{equation}\label{move-P}
(\mathbb{P}_{V_{h}}u, v)=(u, \mathbb{P}_{V_{h}}v), \quad \forall \, u, v\in L^{2}.
\end{equation}
For any time-dependent function, we use a subscript  $t$ to denote its time derivative. For example, $(\bm{u}_{h})_{t}$ is the time derivative of the velocity if $\bm{u}_{h}$ is the velocity field.

\subsection{MHD equations and conserved quantities}\label{sec:intro}
Consider the following system of equations in $\Omega \times (0, {T}]$: 
\begin{subeqnarray}\label{main:eq2} 
\partial_t \bu - \bu\times \bm{\omega} + R_{e}^{-1}\nabla \times \nabla \times \bu - \textsf{c} \bm{j} \times \bm{B} + \nabla P &=& \bm{f},  \slabel{main:eq2m} \\ 
\bm{j} - \nabla \times \bm{B}  &=& \bm{0}, \\
\partial_t \bm{B} + \nabla \times \bm{E} &=& \bm{0}, \slabel{main:eqf} \\ 
R_m^{-1} \bm{j} -  \left ( \bm{E} + {\bf{u}} \times \bm{B} \right ) &=& \bm{0}, \slabel{main:eqo} \\ 
\nabla \cdot {\bf{u}} &=& 0,
\end{subeqnarray}
where $\partial_t {\bf{u}} = \partial {\bf{u}} / \partial t$  and $\partial_t \bm{B}=\partial \bm{B}/\partial t$ are the time derivatives of $\bu$ and $\bm{B}$; ${\bf{u}}$ is the fluid velocity; and $\bm{j}$, $\bm{B}$ and $\bm{E}$ are the volume current density, the magnetic field and the electric field, respectively. In \eqref{main:eq2m}, we use the total pressure $P:=1/2|\bm{u}|^{2}+p$ as an unknown, where $p$ is the physical pressure.  
The fluid momentum equation \eqref{main:eq2m} is in the Lamb form \cite{lamb1932hydrodynamics} with the vorticity $\bm{\omega}:=\nabla\times \bm{u}$. In \eqref{main:eq2}, $\bm{j} \times \bm{B}$ is called the Lorentz force, the force that the magnetic field exerts on the conducting fluid, and $\textsf{c}:=V_{A}^{2}/V^{2}$ is the coupling number, where $V_{A}$ and $V$ are the scales of the {{Alfv\'en}} speed and of the flow, respectively.  The parameters $R_{e}$ and $R_{m}$ are the fluid and magnetic Reynolds numbers, respectively. Throughout this paper, we will refer to the version of \eqref{main:eq2} without $R_{m}^{-1}\bm{j}$ and $R_{e}^{-1}\nabla\times \nabla\times \bm{u}$ (formally, $R_{e}=R_m=\infty$), as the {\it ideal MHD system}.

We consider the following boundary conditions for \eqref{sec:intro} on $\partial \Omega \times (0,T]$:  
\begin{eqnarray}\label{NS-bc}
\bu \times \bm{n} = \bm{0}, \quad P:=p + \frac{1}{2} |\bu|^2 = 0, \quad  \bm{B}\cdot \bm{n} = 0, \quad \mbox{ and } \quad 
\bm{E}\times \bm{n} = \bm{0}. 
\end{eqnarray}
where $\bm{n}$ is the unit outer normal vector. The initial conditions for the fluid velocity and the magnetic field are given for any {$\bm{x}\in\Omega$}
\begin{equation}
\bu(\bm{x},0) =  \bu_{0}(\bm{x}), \quad \bm{B}(\bm{x},0) =  \bm{B}_{0}(\bm{x}).
\end{equation}

In fact, the boundary conditions for $\bm{u}$ and $P$ in \eqref{NS-bc} can be seen as a vorticity boundary condition since $\bm{u}\times \bm{n}=0$ implies $(\nabla\times \bm{u})\cdot \bm{n}=0$ on $\partial \Omega$. We refer to  \cite{girault1990curl,hughes1987new} for similar boundary conditions for the Navier-Stokes equations. As we shall see below, these boundary conditions are the ones that lead to the helicity conservation of ideal MHD systems.

We briefly review some conserved quantities of \eqref{NS-bc} below. First of all, MHD equation \eqref{main:eq2} preserves the energy. 
\begin{theorem}
The MHD system \eqref{main:eq2} with the boundary condition \eqref{NS-bc} satisfies the following energy identity: 
 \begin{eqnarray}\label{resistive:energylaw} 
{1 \over 2}\frac{d}{dt} \|\bu\|_0^2 + \frac{\textsf{c}}{2}  \frac{d}{dt}\|\bm{B}\|_0^2 + R_{e}^{-1} \|\nabla\times \bu\|_0^2  + \textsf{c} R_m^{-1} \| \bm{j}\|_0^2
= (\bm{f}, \bu). 
\end{eqnarray}
\end{theorem}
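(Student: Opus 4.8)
The plan is to test each equation in the system \eqref{main:eq2} with an appropriate quantity, integrate over $\Omega$, and add the resulting identities so that the coupling terms cancel. First I would take the $L^2$ inner product of the momentum equation \eqref{main:eq2m} with $\bu$. The time-derivative term gives $\frac{1}{2}\frac{d}{dt}\|\bu\|_0^2$; the Lamb term $(\bu\times\bm\omega,\bu)$ vanishes identically since $\bu\times\bm\omega\perp\bu$ pointwise; the dissipative term, after integration by parts, becomes $R_e^{-1}\|\nabla\times\bu\|_0^2$ with no boundary contribution because the boundary term is $R_e^{-1}\int_{\partial\Omega}(\nabla\times\bu)\times\bu\cdot\bm n$, which vanishes using $\bu\times\bm n=0$ on $\partial\Omega$; the pressure term $(\nabla P,\bu)=-(P,\nabla\cdot\bu)+\int_{\partial\Omega}P\,\bu\cdot\bm n$ vanishes by $\nabla\cdot\bu=0$ and by $\bu\times\bm n=0$ (which forces $\bu$ to be normal, but then the $P=0$ boundary condition kills the surface term — alternatively $P=0$ on $\partial\Omega$ directly). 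This leaves the Lorentz term $-\textsf{c}(\bm j\times\bm B,\bu)$ on the left and $(\bm f,\bu)$ on the right.

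Next I would handle the magnetic part. Dot \eqref{main:eqf} with $\textsf{c}\bm B$ to get $\frac{\textsf c}{2}\frac{d}{dt}\|\bm B\|_0^2+\textsf{c}(\nabla\times\bm E,\bm B)=0$. Integrating the second term by parts gives $\textsf{c}(\bm E,\nabla\times\bm B)$ plus a boundary term $\textsf c\int_{\partial\Omega}(\bm E\times\bm B)\cdot\bm n$ which vanishes since $\bm E\times\bm n=0$ on $\partial\Omega$. Using $\bm j=\nabla\times\bm B$ from the second equation, this becomes $\textsf{c}(\bm E,\bm j)$. Now substitute Ohm's law \eqref{main:eqo}, $\bm E=R_m^{-1}\bm j-\bu\times\bm B$, to obtain $\textsf{c}(\bm E,\bm j)=\textsf{c}R_m^{-1}\|\bm j\|_0^2-\textsf c(\bu\times\bm B,\bm j)$. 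Finally, observe the algebraic identity $(\bu\times\bm B,\bm j)=(\bm j\times\bm B,\bu)$ (both equal the triple product $\bm u\cdot(\bm B\times\bm j)$ up to sign, checked pointwise), so the Lorentz term from the momentum equation and the $\bu\times\bm B$ term from Ohm's law cancel exactly when the two identities are summed.

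Adding the velocity identity and the magnetic identity then yields precisely \eqref{resistive:energylaw}. The routine calculations are the integration-by-parts formulas and the vector triple-product bookkeeping; there is no genuine obstacle here, only care with signs and with checking that every boundary term is annihilated by one of the four conditions in \eqref{NS-bc}. The one point worth stating carefully is the cancellation of the Lorentz coupling: it is essential that $\textsf{c}$ multiplies the magnetic equation by the same factor it appears with in the momentum equation, so that $-\textsf c(\bm j\times\bm B,\bu)$ and $+\textsf c(\bu\times\bm B,\bm j)$ are negatives of each other. This cross-term cancellation, together with the pointwise orthogonality $\bu\perp(\bu\times\bm\omega)$, is what makes the energy identity hold with only the two dissipative terms $R_e^{-1}\|\nabla\times\bu\|_0^2$ and $\textsf c R_m^{-1}\|\bm j\|_0^2$ surviving on the left.
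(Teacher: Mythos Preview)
Your approach is correct and is exactly the one the paper sketches: test the momentum equation with $\bu$, test Faraday's law with $\textsf{c}\bm{B}$, use Ohm's law to eliminate $\bm{E}$, and rely on the cancellation between the Lorentz force term and the magnetic advection term. One small bookkeeping slip: the scalar triple product identity actually gives $(\bu\times\bm{B},\bm{j}) = -(\bm{j}\times\bm{B},\bu)$, not equality; with the correct sign the two coupling contributions $-\textsf{c}(\bm{j}\times\bm{B},\bu)$ and $-\textsf{c}(\bu\times\bm{B},\bm{j})$ indeed sum to zero, so your conclusion stands.
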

The energy law \eqref{resistive:energylaw} is well known (see, for example, \cite{liu2009introduction}) and the key of the proof is a 
cancelation between the Lorentz force term $ ( \bm{j} \times \bm{B},\bu)=((\nabla\times \bm{B})\times \bm{B}, \bu)$ obtained by multiplying $\bu$ to the equation \eqref{main:eq2m} and the magnetic advection $ (\bu \times \bm{B}, \nabla \times \bm{B})$ obtained by multiplying $\bm{B}$ to \eqref{main:eqf}.  This cancelation reflects symmetry in the operator structure of the MHD system (c.f., \cite[(4.7)]{ma2016robust}). 

We now discuss the helicity conservation for \eqref{main:eq2}. There are two kinds of helicity in the MHD system: the magnetic helicity  $\mathcal{H}_{m}$ and the cross helicity $\mathcal{H}_c$, which are defined, respectively, as follows: 
\begin{equation*} 
\mathcal{H}_{m} := \int_{\Omega}\bm{B}\cdot\bm{A}\, {d}x, \quad \mbox{ and } \quad  \mathcal{H}_c :=  \int_{\Omega} \bm{B} \cdot \bm{u}\, {d}x.
\end{equation*} 
Here $\bm{A}$ is any potential such that $\nabla \times \bm{A} = \bm{B}$.  The definition of $\mathcal{H}_{m}$ is gauge invariant, i.e., not depending on the choice of the magnetic potential $\bm{A}$ since $\int_{\Omega}\bm{B}\cdot\nabla\phi\, {d}x=0$ for any scalar field $\phi$ with the given boundary conditions.

In ideal MHD systems, the magnetic and cross helicity is conserved. 
We state a slightly more general helicity identity as follows. The proofs (usually with vanishing boundary conditions and $R_{e}=R_{m}=\infty$) can be found in, e.g., \cite{maj2017mathematical}. 
\begin{lemma}\label{lem:continuous-helicity}
For the MHD system \eqref{main:eq2}, the following identity holds: 
\begin{equation} \label{id:Hm}
\frac{\mathrm{d}}{\mathrm{d}t} \mathcal{H }_m = -\int_{\partial\Omega}(\bm{A}_{t}+2\bm{E})\times\bm{A}\cdot \bm{n}\, {d}s-2R_{m}^{-1}\int_{\Omega}\bm{B}\cdot \nabla\times \bm{B}\, {d}x,
\end{equation} 
\begin{align}  \nonumber
\frac{d}{dt} \mathcal{H}_{c} = \int_{\partial \Omega} &( [\bu\times \bm{B}] \times \bu -  P\bm{B}-R_{m}^{-1}(\nabla\times \bm{B})\times \bm{u}-R_{e}^{-1}\bm{\omega}\times \bm{B})\cdot \bm{n}\, {d}s\\ &
+\int_{\Omega}\bm{f}\cdot \bm{B}\, {d}x -(R_{e}^{-1}+R_{m}^{-1})\int_{\Omega}\nabla\times \bm{B}\cdot \nabla\times \bm{u}\, {d}x.\label{id:Hc}
\end{align} 
\end{lemma}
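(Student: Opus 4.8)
The plan is to differentiate each helicity under the integral sign and reduce everything to the evolution equations \eqref{main:eq2m}, \eqref{main:eqf}, \eqref{main:eqo} by repeated use of the vector integration-by-parts identity
\[
\int_\Omega (\nabla\times \bm{a})\cdot \bm{b}\,\mathrm{d}x - \int_\Omega \bm{a}\cdot(\nabla\times \bm{b})\,\mathrm{d}x = \int_{\partial\Omega}(\bm{a}\times \bm{b})\cdot \bm{n}\,\mathrm{d}s,
\]
together with the elementary facts $\bm{a}\cdot(\bm{a}\times\bm{b})=0$ and the alternating property of the scalar triple product. I also use $\nabla\cdot\bm{B}=0$, which is preserved by \eqref{main:eqf} (take the divergence) and is consistent with the data.

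First I would treat the magnetic helicity. Fixing a potential $\bm{A}$ with $\nabla\times\bm{A}=\bm{B}$, so that $\bm{B}_t=\nabla\times\bm{A}_t$, and differentiating $\mathcal{H}_{m}=\int_\Omega\bm{B}\cdot\bm{A}\,\mathrm{d}x$, one integrates $\int_\Omega\bm{B}\cdot\bm{A}_t\,\mathrm{d}x=\int_\Omega(\nabla\times\bm{A})\cdot\bm{A}_t\,\mathrm{d}x$ by parts to identify it with $\int_\Omega\bm{B}_t\cdot\bm{A}\,\mathrm{d}x$ up to the boundary term $-\int_{\partial\Omega}(\bm{A}_t\times\bm{A})\cdot\bm{n}\,\mathrm{d}s$, giving $\tfrac{\mathrm{d}}{\mathrm{d}t}\mathcal{H}_{m}=2\int_\Omega\bm{B}_t\cdot\bm{A}\,\mathrm{d}x-\int_{\partial\Omega}(\bm{A}_t\times\bm{A})\cdot\bm{n}\,\mathrm{d}s$. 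Substituting $\bm{B}_t=-\nabla\times\bm{E}$ from \eqref{main:eqf} and integrating by parts once more turns $\int_\Omega\bm{B}_t\cdot\bm{A}\,\mathrm{d}x$ into $-\int_\Omega\bm{E}\cdot\bm{B}\,\mathrm{d}x-\int_{\partial\Omega}(\bm{E}\times\bm{A})\cdot\bm{n}\,\mathrm{d}s$. Finally Ohm's law \eqref{main:eqo} gives $\bm{E}=R_{m}^{-1}\bm{j}-\bm{u}\times\bm{B}$, and since $(\bm{u}\times\bm{B})\cdot\bm{B}=0$ and $\bm{j}=\nabla\times\bm{B}$ the volume contribution collapses to $R_{m}^{-1}\int_\Omega\bm{B}\cdot(\nabla\times\bm{B})\,\mathrm{d}x$; grouping the two boundary integrals into $(\bm{A}_t+2\bm{E})\times\bm{A}$ yields \eqref{id:Hm}.

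For the cross helicity I would differentiate $\mathcal{H}_{c}=\int_\Omega\bm{B}\cdot\bm{u}\,\mathrm{d}x$. The term $\int_\Omega\bm{B}_t\cdot\bm{u}\,\mathrm{d}x$ becomes $-\int_\Omega\bm{E}\cdot(\nabla\times\bm{u})\,\mathrm{d}x-\int_{\partial\Omega}(\bm{E}\times\bm{u})\cdot\bm{n}\,\mathrm{d}s$ via \eqref{main:eqf} and integration by parts. Into $\int_\Omega\bm{B}\cdot\bm{u}_t\,\mathrm{d}x$ I substitute the momentum equation \eqref{main:eq2m}: the Lorentz term dies because $\bm{B}\cdot(\bm{j}\times\bm{B})=0$; the pressure term integrates by parts to $-\int_{\partial\Omega}P\,\bm{B}\cdot\bm{n}\,\mathrm{d}s$ using $\nabla\cdot\bm{B}=0$; the diffusion term yields $-R_{e}^{-1}\int_\Omega(\nabla\times\bm{u})\cdot(\nabla\times\bm{B})\,\mathrm{d}x-R_{e}^{-1}\int_{\partial\Omega}(\bm{\omega}\times\bm{B})\cdot\bm{n}\,\mathrm{d}s$; and $\int_\Omega\bm{B}\cdot(\bm{u}\times\bm{\omega})\,\mathrm{d}x$ together with $\int_\Omega\bm{f}\cdot\bm{B}\,\mathrm{d}x$ are kept. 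I then substitute $\bm{E}=R_{m}^{-1}(\nabla\times\bm{B})-\bm{u}\times\bm{B}$ into $-\int_\Omega\bm{E}\cdot(\nabla\times\bm{u})\,\mathrm{d}x$: its piece $\int_\Omega(\bm{u}\times\bm{B})\cdot\bm{\omega}\,\mathrm{d}x$ exactly cancels $\int_\Omega\bm{B}\cdot(\bm{u}\times\bm{\omega})\,\mathrm{d}x$ by antisymmetry of the triple product (the cross-helicity analogue of the Lorentz/magnetic-advection cancellation behind \eqref{resistive:energylaw}), its piece $-R_{m}^{-1}\int_\Omega(\nabla\times\bm{B})\cdot(\nabla\times\bm{u})\,\mathrm{d}x$ combines with the diffusion volume term, and inserting the same expression for $\bm{E}$ into $-\int_{\partial\Omega}(\bm{E}\times\bm{u})\cdot\bm{n}\,\mathrm{d}s$ produces $([\bm{u}\times\bm{B}]\times\bm{u}-R_{m}^{-1}(\nabla\times\bm{B})\times\bm{u})\cdot\bm{n}$ on $\partial\Omega$. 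Collecting all terms gives \eqref{id:Hc}.

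The computation is essentially mechanical; the only delicate point is sign bookkeeping — through the two successive integrations by parts in the magnetic case and, above all, through the triple-product cancellation in the cross-helicity case, where one must confirm $\int_\Omega\bm{B}\cdot(\bm{u}\times\bm{\omega})\,\mathrm{d}x=-\int_\Omega(\bm{u}\times\bm{B})\cdot\bm{\omega}\,\mathrm{d}x$ and not the opposite sign.
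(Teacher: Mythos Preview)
Your argument is correct and complete. The paper itself does not give a proof of this lemma; it only remarks that the proofs ``(usually with vanishing boundary conditions and $R_{e}=R_{m}=\infty$) can be found in, e.g., \cite{maj2017mathematical}'', so your computation supplies exactly the details the paper defers to the literature, via the expected route of differentiating under the integral, integrating by parts, and invoking \eqref{main:eqf}, \eqref{main:eqo}, \eqref{main:eq2m}.
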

Here the first term on the right hand side of \eqref{id:Hm} is due to the contribution of boundary terms and the second term reflects the effect of diffusion. With the boundary condition $\bm{B}\cdot\bm{n}=0$ as \eqref{NS-bc}, we can choose $\bm{A}$ such that $\bm{A}\times \bm{n}=0$ and thus the first term vanishes. In ideal MHD systems, the second term also vanishes and therefore the magnetic helicity is conserved.  Similarly, with the boundary conditions in \eqref{NS-bc} and $\bm{f}$ being any gradient field,  the right hand side of \eqref{id:Hc} vanishes in the ideal MHD limit and therefore the cross helicity is conserved.   This observation is summarized in the following theorem.
\begin{theorem}
In the ideal MHD systems with the boundary conditions \eqref{NS-bc} with $\bm{f}$ being a gradient field, the magnetic helicity and the cross helicity are conserved in the evolution:
$$
\frac{d}{d t} \mathcal{H}_m=\frac{d}{dt} \mathcal{H}_{c}=0.
$$
\end{theorem}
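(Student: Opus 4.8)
The plan is to deduce the final theorem directly from Lemma~\ref{lem:continuous-helicity} by specializing the boundary and forcing hypotheses. The whole argument is: take the two identities \eqref{id:Hm} and \eqref{id:Hc}, set $R_e^{-1}=R_m^{-1}=0$ (ideal limit), and then show that every boundary integral that survives vanishes under the boundary conditions \eqref{NS-bc} together with $\bm{f}=\nabla g$ for some scalar potential $g$.

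For the magnetic helicity: in the ideal limit the bulk term $-2R_m^{-1}\int_\Omega \bm{B}\cdot\nabla\times\bm{B}\,dx$ drops out, leaving only $-\int_{\partial\Omega}(\bm{A}_t+2\bm{E})\times\bm{A}\cdot\bm{n}\,ds$. First I would record that since $\bm{B}\cdot\bm{n}=0$ on $\partial\Omega$ and $\Omega$ is contractible, one can choose a vector potential $\bm{A}$ with $\nabla\times\bm{A}=\bm{B}$ and $\bm{A}\times\bm{n}=0$ on $\partial\Omega$ (this is the standard gauge-fixing; I would cite \cite[Chapter 3]{arnold1999topological} or construct it by solving $\nabla\times\bm{A}=\bm{B}$, $\nabla\cdot\bm{A}=0$ with the tangential boundary condition). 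With that gauge, $\bm{A}\times\bm{n}=0$ persists for all $t$, hence $\bm{A}_t\times\bm{n}=0$ as well, so the integrand $(\bm{A}_t+2\bm{E})\times\bm{A}\cdot\bm{n}$ is a sum of cross products each involving a vector parallel to $\bm{n}$ with $\bm{A}$, and since $\bm{A}$ is tangential this whole expression is zero pointwise on $\partial\Omega$. Actually the cleanest way: $\bm{A}\times\bm{n}=0$ means $\bm{A}=(\bm{A}\cdot\bm{n})\bm{n}$ is normal, so for any field $\bm{w}$, $(\bm{w}\times\bm{A})\cdot\bm{n}=\bm{w}\cdot(\bm{A}\times\bm{n})=0$. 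Either way the boundary integral vanishes and $\tfrac{d}{dt}\mathcal{H}_m=0$. Alternatively, and perhaps more transparently, one can note that the magnetic helicity identity can equivalently be read off using $\bm{E}\times\bm{n}=0$ from \eqref{NS-bc}, but the tangential-$\bm{A}$ gauge handles both $\bm{A}_t$ and $\bm{E}$ uniformly, so I would go with that.

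For the cross helicity: in the ideal limit the two bulk diffusion terms $(R_e^{-1}+R_m^{-1})\int_\Omega \nabla\times\bm{B}\cdot\nabla\times\bm{u}\,dx$ and the $R_m^{-1},R_e^{-1}$ boundary contributions vanish, leaving
\begin{equation*}
\frac{d}{dt}\mathcal{H}_c=\int_{\partial\Omega}\big([\bm{u}\times\bm{B}]\times\bm{u}-P\bm{B}\big)\cdot\bm{n}\,ds+\int_\Omega \bm{f}\cdot\bm{B}\,dx.
\end{equation*}
The bulk term: with $\bm{f}=\nabla g$, $\int_\Omega \nabla g\cdot\bm{B}\,dx=-\int_\Omega g\,\nabla\cdot\bm{B}\,dx+\int_{\partial\Omega} g\,\bm{B}\cdot\bm{n}\,ds=0$, using $\nabla\cdot\bm{B}=0$ (which follows from \eqref{main:eqf}, $\partial_t\bm{B}=-\nabla\times\bm{E}$, preserving $\nabla\cdot\bm{B}=0$ from the initial data — I would note this) and $\bm{B}\cdot\bm{n}=0$. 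For the boundary term $P\bm{B}\cdot\bm{n}$: immediately zero since $\bm{B}\cdot\bm{n}=0$; one does not even need $P=0$. For $([\bm{u}\times\bm{B}]\times\bm{u})\cdot\bm{n}$: use $\bm{u}\times\bm{n}=0$, so $\bm{u}=(\bm{u}\cdot\bm{n})\bm{n}$ is normal on $\partial\Omega$; then $([\bm{u}\times\bm{B}]\times\bm{u})\cdot\bm{n}=\bm{u}\cdot(\bm{u}\times[\bm{u}\times\bm{B}])$... more simply, expand $(\bm{a}\times\bm{u})\cdot\bm{n}=\bm{a}\cdot(\bm{u}\times\bm{n})=0$ with $\bm{a}=\bm{u}\times\bm{B}$. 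So the whole boundary integrand vanishes pointwise, giving $\tfrac{d}{dt}\mathcal{H}_c=0$.

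The main obstacle — really the only nontrivial point — is the existence of the tangential gauge $\bm{A}$ with $\bm{A}\times\bm{n}=0$ on a general contractible Lipschitz domain, and the (routine but worth stating) fact that one may take this gauge to be time-independent in structure so that $\bm{A}_t\times\bm{n}=0$; I would dispatch this by citing the topological-hydrodynamics literature rather than reproving it. Everything else is vector-calculus bookkeeping: integration by parts for the forcing term and the scalar triple product identity $(\bm{a}\times\bm{b})\cdot\bm{n}=\bm{a}\cdot(\bm{b}\times\bm{n})$ applied to the tangential boundary fields. Since Lemma~\ref{lem:continuous-helicity} is already granted, the proof is short and I would present it as "substitute the hypotheses into \eqref{id:Hm}–\eqref{id:Hc}; each term on the right is seen to vanish as follows," and then list the three or four vanishing arguments above.
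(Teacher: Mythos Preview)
Your proposal is correct and follows essentially the same approach as the paper: the paper's proof is just the short paragraph preceding the theorem, which says to choose $\bm{A}$ with $\bm{A}\times\bm{n}=0$ (possible since $\bm{B}\cdot\bm{n}=0$) so the boundary term in \eqref{id:Hm} vanishes, and then notes that the boundary conditions \eqref{NS-bc} together with $\bm{f}$ a gradient kill the right-hand side of \eqref{id:Hc} in the ideal limit. You have supplied the vector-identity details ($(\bm{a}\times\bm{b})\cdot\bm{n}=\bm{a}\cdot(\bm{b}\times\bm{n})$ and the integration by parts for $\int\nabla g\cdot\bm{B}$) that the paper leaves implicit, but the logic is the same.
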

Note that the  magnetic helicity is conserved as long as (formally) $R_{m}^{-1}=0$, even if $R_{e}$ is finite.

\section{Helicity-preserving numerical discretization} \label{sec:alg}
In this section, we construct finite element methods that preserve both magnetic and cross helicity. 
To motivate the scheme, in Section \ref{helicity-polution} we show that extra terms pollute the helicity in some existing numerical schemes, e.g., the algorithms in \cite{hu2014stable}. 
We present our new finite element methods in Section \ref{sec:equiv} and prove their properties.

\subsection{Helicity-pollution in non-conservative schemes}\label{helicity-polution}

Let $\bm{V}^h \subset [H_0^1(\Omega)]^{3}$ and $Q^h \subset L^{2}_{0}(\Omega)$ be an inf-sup stable Stokes finite element pair and $\mathbb{Q}_h^V : [L^2(\Omega)]^3 \to \bm{V}^h$ be the $L^2$ projection. Define $\bm{Z}_{h}:=\bm{V}^h \times Q^h \times H_0^h({\rm div}, \Omega)\times H_{0}^{h}(\curl, \Omega)\times H_{0}^{h}(\curl, \Omega)$.  A seemingly natural discretization of \eqref{main:eq2}, in the spirit of existing discretization schemes, e.g., \cite{hu2014stable,hu2019structure}, is the following: Find $(\bm{u}_{h}(t), P_{h}(t), \bm{B}_h(t), \bm{E}_{h}(t), \bm{j}_{h}(t)) \in \bm{Z}_{h}$ such that for any $(\bm{v}_{h}, Q_{h}, \bm{C}_h, \bm{F}_{h}, \bm{k}_{h}) \in \bm{Z}_{h}$:
\begin{subeqnarray}\nonumber
(\partial_t \bu_{h}, \bm{v}_{h}) - (\bu\times (\nabla\times \bm{u}_{h}), \bm{v}_{h}) + R_{e}^{-1}(\nabla \times \bu_{h}, \nabla \times \bm{v}_{h})\\ - (P_{h}, \nabla\cdot \bm{v}_{h}) - \textsf{c} (\bm{j}_{h} \times \bm{B}_{h}, \bm{v}_{h})&=& (\bm{f}, \bm{v}_{h}),  \slabel{main:eq2m-h} \\ 
(\bm{j}_{h}, \bm{k}_{h}) - ( \bm{B}_{h}, \nabla\times \bm{k}_{h})  &=& \bm{0},\slabel{main:eqj-h} \\
(\partial_t \bm{B}_{h}, \bm{C}_{h}) + (\nabla \times \bm{E}_{h}, \bm{C}_{h}) &=& \bm{0}, \slabel{main:eqf-h} \\ 
R_m^{-1} (\bm{j}_{h}, \bm{F}_{h}) -   ( \bm{E}_{h} + {\bf{u}}_{h} \times \bm{B}_{h}, \bm{F}_{h} ) &=& \bm{0}, \slabel{main:eqo-h} \\ 
{({\bf{u}}_{h}, \nabla Q_{h})} &=& 0.
\end{subeqnarray}

From \eqref{main:eqj-h} and \eqref{main:eqo-h}, we have $\bm{j}_{h}=\nabla_{h}\times \bm{B}_{h}$ and $R_{m}^{-1}\bm{j}_{h}=\mathbb{Q}_{h}^{\curl}(\bm{E}_{h}+{\bf{u}}_{h} \times \bm{B}_{h})=\bm{E}_{h}+\mathbb{Q}_{h}^{\curl}({\bf{u}}_{h} \times \bm{B}_{h})$ (since $\bm{E}_{h}$ is already in $H^{h}_{0}(\curl, \Omega)$), respectively.  Therefore
\begin{equation}\label{eqn:EBu}
\bm{E}_{h}=R_{m}^{-1}\nabla_{h}\times \bm{B}_{h}-\mathbb{Q}_{h}^{\curl}({\bf{u}}_{h} \times \bm{B}_{h}).
\end{equation}
From \eqref{main:eqf-h}, $(\bm{B}_{h})_{t}+\nabla\times \bm{E}_{h}=0$, and $\frac{d}{dt}(\nabla\cdot \bm{B}_{h})=0$. 
Assume that $\bm{B}_{h}^{0}$, the initial data of $\bm{B}_{h}$, satisfies the divergence-free condition $\nabla\cdot \bm{B}_{h}^{0}=0$.  Then the divergence-free condition $\nabla\cdot \bm{B}_{h}=0$ holds at any time $t\geq 0$.  Therefore there exists $\bm{A}_{h}\in H^{h}_{0}(\curl)$ such that $\nabla\times \bm{A}_{h}=\bm{B}_{h}$. Then $\nabla\times ((\bm{A}_{h})_{t}+\bm{E}_{h})=0$ and without loss of generality ($\bm{A}_{h}$ is chosen up to a gradient potential), we have $(\bm{A}_{h})_{t}+\bm{E}_{h}=0$. Substituting $\bm{E}_{h}$ by \eqref{eqn:EBu}, we have 
$$
(\bm{A}_{h})_{t}=-R_{m}^{-1}\nabla_{h}\times \bm{B}_{h}+\mathbb{Q}_{h}^{\curl}({\bf{u}}_{h} \times \bm{B}_{h}).
$$
Consequently, by the Leibniz rule and integration by parts,
\begin{align*}
\frac{d}{dt}(\bm{A}_{h}, \bm{B}_{h})&=2((\bm{A}_{h})_{t}, \bm{B}_{h})=-2R_{m}^{-1}(\bm{B}_{h}, \nabla_{h}\times \bm{B}_{h})+2(\bm{u}_{h}\times \bm{B}_{h}, \mathbb{Q}_{h}^{\curl}\bm{B}_{h})\\
& =-2R_{m}^{-1}(\bm{B}_{h}, \nabla_{h}\times \bm{B}_{h})+2(\bm{u}_{h}\times \bm{B}_{h}, (\mathbb{Q}_{h}^{\curl}-\mathbb{I})\bm{B}_{h}). 
\end{align*}
For the second equality we have used the property \eqref{move-P}. 
Here the first term on the right hand side is due to the magnetic diffusion which is consistent with the continuous level \eqref{id:Hm}, while the second term is nonphysical due to the numerical scheme.

To see the pollution of the cross helicity, take $\bm{v}_{h}= \mathbb{Q}_{h}^{V}\bm{B}_{h}$ in \eqref{main:eq2m-h} and $\bm{C}_{h}=\mathbb{Q}_{h}^{\div}\bm{v}_{h}$ in \eqref{main:eqf-h}, where $\mathbb{Q}_{h}^{\div}: L^{2}\to H_{0}^{h}(\div, \Omega)$ is the $L^{2}$ projection to the finite element space:
\begin{align*}
\frac{d}{dt}(\bm{u}_{h}, \bm{B}_{h})&=((\bm{u}_{h})_{t}, \bm{B}_{h})+((\bm{B}_{h})_{t}, \bm{u}_{h})\\
&= (\bm{u}_{h}\times (\nabla\times \bm{u}_{h}), \mathbb{Q}_{h}^{V}\bm{B}_{h})-(\bm{u}_{h}\times ( \mathbb{Q}_{h}^{\curl} \nabla\times \bm{u}_{h}), \bm{B}_{h})
\\&\quad\quad +\textsf{c}((\nabla_{h}\times \bm{B}_{h})\times \bm{B}_{h}, (\mathbb{Q}_{h}^{V}-\mathbb{I})\bm{B}_{h})
-(P_{h}, \nabla\cdot \mathbb{Q}_{h}^{V}\bm{B}_{h})+(\bm{f}, \bm{B}_{h})
\\&\quad\quad -R_{m}^{-1}(\nabla_{h}\times \bm{B}_{h}, \nabla\times \bm{u}_{h})-R_{e}^{-1}(\nabla \bm{u}_{h}, \nabla \mathbb{Q}_{h}^{V}\bm{B}_{h})\\
&= (\bm{u}_{h}\times (\nabla\times \bm{u}_{h}), (\mathbb{Q}_{h}^{V}-\mathbb{I})\bm{B}_{h})+(\bm{u}_{h}\times ( (\mathbb{I}-\mathbb{Q}_{h}^{\curl}) \nabla\times \bm{u}_{h}), \bm{B}_{h})\\&\quad\quad+\textsf{c}((\nabla_{h}\times \bm{B}_{h})\times \bm{B}_{h}, (\mathbb{Q}_{h}^{V}-\mathbb{I})\bm{B}_{h})-(P_{h}, \nabla\cdot \mathbb{Q}_{h}^{V}\bm{B}_{h})+(\bm{f}, \bm{B}_{h})\\
&\quad\quad-R_{m}^{-1}(\nabla_{h}\times \bm{B}_{h}, \nabla\times \bm{u}_{h})-R_{e}^{-1}(\nabla \bm{u}_{h}, \nabla \mathbb{Q}_{h}^{V}\bm{B}_{h}),
\end{align*}
where the last three terms are due to source or diffusion terms as the continuous level, and the remaining terms are nonphysical due to the numerical discretization.

\subsection{Full-discrete finite element formulation} \label{sec:equiv}

We present a helicity-preserving full discretization for the MHD system. We use the Crank-Nicolson method as the temporal scheme. Similar conclusions also hold for semi-discretization with continuous time. We begin our discussion by defining 
\begin{equation}
\bm{X}_{h} =  [H^{h}_{0}({{\curl}},\Omega)]^5 \times H^{h}_{0}(\div,\Omega) \times H_0^h(\rm grad, \Omega).
\end{equation} 

Below we use $\bu_{h}, \bm{B}_{h}, \bm{\omega}_{h}, p_{h}, \bm{j}_{h}, \bm{H}_{h}, \bm{E}_{h}$ to denote the variables evaluated at the midpoint of the time interval $[t_n,t_{n+1}]$. For $\bu_{h}$ and $ \bm{B}_{h}$, this means
\begin{equation}
\bu_{h} := \frac{\bu_{h}^{n+1} + \bu_{h}^n}{2} \quad \mbox{ and } \quad  \bm{B}_{h} := \frac{\bm{B}_{h}^{n+1} + \bm{B}_{h}^n}{2}.  
\end{equation} 
For other variables whose time derivatives do not appear in the equations, one defines, e.g.,  $P_{h}:=P_{h}^{n+1/2}$ as an independent variable without referring to $P_{h}^{n}$ or $P_{h}^{n+1}$, i.e., one uses stagger grids in the time direction. 

We also denote
$$
D_{t}\int_{\Omega}\bm{a}\cdot\bm{b}\,{d}x:=\frac{1}{\Delta t}(\int_{\Omega}\bm{a}^{n+1}\cdot\bm{b}^{n+1}\,{d}x-\int_{\Omega}\bm{a}^{n}\cdot\bm{b}^{n}\,{d}x),
$$ 
as the difference of the inner product at two successive time steps.

 The main scheme can be written as follows. 
 \begin{algorithm}[H]
\caption{Main algorithm}
\label{alg:main}
\begin{algorithmic}
\State Given
$(\bm{u}^{0}, \bm{B}^{0})\in H_{0}^{h}(\curl, \Omega)\times H_{0}^{h}(\div, \Omega)$ and $\bm{f}\in L^{2}(\Omega)$,
\For{$n = 0$, $1$, $\cdots$, $N$}
\State 
Find {$(\bu_{h}^{n+1}, \bm{\omega}_{h}^{n+1/2}, \bm{j}_{h}^{n+1/2}, \bm{E}_{h}^{n+1/2}, \bm{H}_{h}^{n+1/2}, \bm{B}_{h}^{n+1}, P_{h}^{n+1/2}) \in \bm{X}_{h}$}, 
such that for all $(\bv_{h}, \bm{\mu}_{h}, \bm{k}_{h}, \bm{F}_{h}, \bm{G}_{h}, \bm{C}_{h}, Q_{h})  \in \bm{X}_{h}$:
\begin{subeqnarray}\label{main:eq3d}  \slabel{eqn1}
\left( {D_t} \bu_{h}, \bv_{h} \right) - (\bu_{h} \times \bm{\omega}_{h}, \bv_{h}) + R_e^{-1}(\nabla\times \bu_{h}, \nabla\times \bv_{h}) &&  \nonumber \\ + 
 (\nabla P_{h}, \bv_{h})  - \textsf{c} (\bm{j}_{h} \times \bm{H}_{h},\bv_{h} ) &=& (\bm{f}, \bv_{h}), \qquad  \\\slabel{eqn2}
\left( {D_t} \bm{B}_{h}, \bm{C}_{h} \right)  + (\nabla\times \bm{E}_{h}, \bm{C}_{h}) &=& 0, \\ \slabel{eqn3}
(R_m^{-1} \bm{j}_{h} - [\bm{E}_{h} + \bu_{h}\times \bm{H}_{h}], \bm{G}_{h})  &=& 0, \\\slabel{eqn4}
(\bm{\omega}_{h}, \bm{\mu}_{h}) - (\nabla\times\bu_{h},  \bm{\mu}_{h}) &=& 0, \\\slabel{eqn5}
(\bm{j}_{h}, \bm{k}_{h}) - (\bm{B}_{h}, \nabla\times \bm{k}_{h}) &=& 0, \\ \slabel{eqn6}
(\bm{B}_{h}, \bm{F}_{h}) - (\bm{H}_{h}, \bm{F}_{h}) &=& 0, \\ \slabel{eqn7}
(\bu_{h} ,\nabla Q_{h}) &=& 0, 
\end{subeqnarray} 
where 
\begin{equation} 
{D_t} \bu_{h} := \frac{\bu_{h}^{n+1} - \bu_{h}^n}{\Delta t} \quad \mbox{ and } \quad {D_t} \bm{B}_{h} := \frac{\bm{B}_{h}^{n+1} - \bm{B}_{h}^n}{\Delta t},
\end{equation} 
and 
$$
\bm{u}_{h}:=\frac{\bm{u}_{h}^{n+1}+\bm{u}_{h}^{n}}{2},  \quad \bm{B}_{h}:=\frac{\bm{B}_{h}^{n+1}+\bm{B}_{h}^{n}}{2},\quad \bm{\omega}_{h}:=\bm{\omega}_{h}^{n+1/2}, \quad P_{h}:=P_{h}^{n+1/2}, 
$$
$$
\bm{E}_{h}:=\bm{E}_{h}^{n+1/2}, \quad \bm{H}_{h}:=\bm{H}_{h}^{n+1/2}, \quad \bm{j}_{h}:=\bm{j}_{h}^{n+1/2}.
$$

\EndFor
\end{algorithmic}
\end{algorithm}

Figure 2 summarizes the choice of variables and spaces. 

\smallskip

\centerline{  \includegraphics[width=0.9\textwidth]{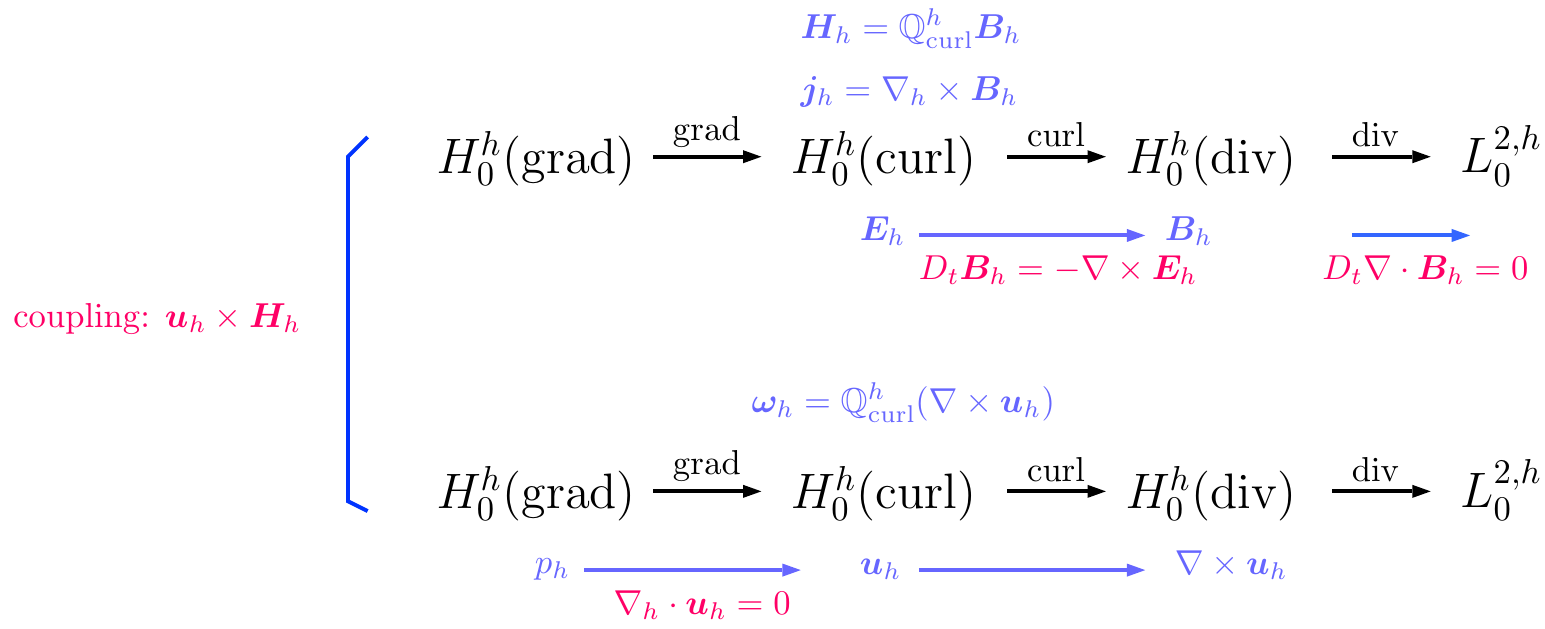} 
  }

\centerline{
  \includegraphics[width=0.9\textwidth]{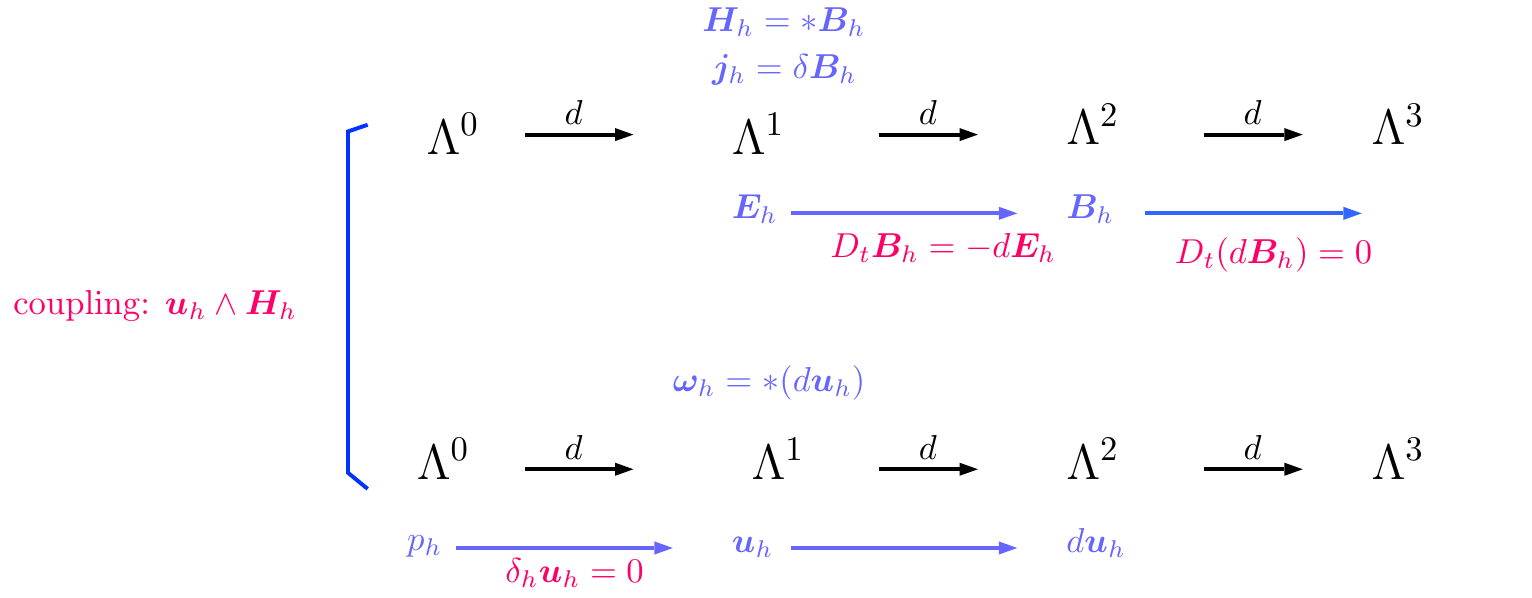} 
  }

\captionof{figure}{Choice of variables, spaces and their relations. Top: functions and differential operators. Bottom: differential form point of view. }
\smallskip

\begin{remark}
There are a number of variables in \eqref{main:eq3d}. However, several of them can be obtained easily as projections of other variables. For example, from \eqref{eqn3}-\eqref{eqn7}, we have
$$
\bm{E}_{h}=R_{m}^{-1}\nabla_{h}\times \bm{B}_{h}-\mathbb{Q}_{h}^{\curl}(\bm{u}_h\times \bm{H}_{h}), \quad \bm{\omega}_{h}=\mathbb{Q}_{h}^{\curl}(\nabla\times \bm{u}_{h}), 
$$
$$
\bm{j}_{h}=\nabla_{h}\times \bm{B}_{h}, \quad \bm{H}_{h}=\mathbb{Q}_{h}^{\curl}\bm{B}_{h}.
$$
In particular, $\bm{\omega}_{h}$ and $\bm{H}_{h}$ can be obtained by the local projections of $\nabla\times \bm{u}_{h}$ and $\bm{B}_{h}$, respectively.  Therefore in the numerical tests, one can solve the system \eqref{main:eq3d} efficiently. 
\end{remark}

In the analysis below, we will need the following basic fact about the Crank-Nicolson scheme.
\begin{lemma} 
For any given {$\bm{a}=(\bm{a}^{n})_{n=0, 1, \cdots, N} \subset [L^{2}(\Omega)]^{3}$}, we have 
\begin{eqnarray}
({D}_{t} \bm{a}, \bm{a}) &=& \frac{1}{2\Delta t} \left ( \|\bm{a}^{n+1}\|^2 - \|\bm{a}^n\|^2 \right ). 
\end{eqnarray}
Furthermore, for any pair of vectors {$(\bm{a}^{n}, \bm{b}^{n})_{n=0, 1, \cdots, N} \subset  [L^{2}(\Omega)]^{3} \times  [L^{2}(\Omega)]^{3}$}, we have the following identity: 
\begin{equation} 
{D_t} \int \bm{a} \cdot \bm{b} \, {d}x  = \int {D_t} \bm{a} \cdot \bm{b} \, {d}x + \int \bm{a} \cdot {D_t} \bm{b} \, {d}x ,   
\end{equation}  
where 
\begin{equation}
{D_t} \int \bm{a} \cdot \bm{b} \, {d}x := \frac{1}{\Delta t} \left (\int \bm{a}^{n+1} \cdot \bm{b}^{n+1}\, {d}x -  \int \bm{a}^{n} \cdot \bm{b}^{n} \, {d}x\right ).
\end{equation} 
Recall that 
$$
D_{t}\bm{a}:=\frac{\bm{a}^{n+1}-\bm{a}^{n}}{\Delta t}, \quad \bm{a}:=\frac{\bm{a}^{n+1}+\bm{a}^{n}}{2}, 
$$
and the same convention is used for $D_{t}\bm{b}$ and $\bm{b}$.
\end{lemma}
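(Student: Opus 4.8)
The plan is to prove both identities by direct expansion, since each reduces to elementary algebra on the finite pair of vectors $(\bm{a}^{n},\bm{a}^{n+1})$ (respectively two such pairs) combined with the bilinearity and symmetry of the $L^{2}$ inner product $(\cdot,\cdot)$. I would prove the discrete product rule (the second identity) first and then obtain the first identity as the special case $\bm{b}=\bm{a}$.

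For the product rule, substitute $D_{t}\bm{a}=(\bm{a}^{n+1}-\bm{a}^{n})/\Delta t$, $\bm{a}=(\bm{a}^{n+1}+\bm{a}^{n})/2$, and the analogous expressions for $\bm{b}$, into the right-hand side $\int D_{t}\bm{a}\cdot\bm{b}\,{d}x+\int\bm{a}\cdot D_{t}\bm{b}\,{d}x$. Expanding each integrand pointwise produces two groups of four scalar products; the two mixed products $\bm{a}^{n+1}\cdot\bm{b}^{n}$ and $\bm{a}^{n}\cdot\bm{b}^{n+1}$ occur with opposite signs in the two groups and cancel, leaving $\frac{1}{2\Delta t}\int\bigl(2\,\bm{a}^{n+1}\cdot\bm{b}^{n+1}-2\,\bm{a}^{n}\cdot\bm{b}^{n}\bigr)\,{d}x$, which is exactly $D_{t}\int\bm{a}\cdot\bm{b}\,{d}x$ by definition. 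The underlying pointwise identity being used is $(\bm{a}^{n+1}-\bm{a}^{n})\cdot(\bm{b}^{n+1}+\bm{b}^{n})+(\bm{a}^{n+1}+\bm{a}^{n})\cdot(\bm{b}^{n+1}-\bm{b}^{n})=2\bigl(\bm{a}^{n+1}\cdot\bm{b}^{n+1}-\bm{a}^{n}\cdot\bm{b}^{n}\bigr)$, valid for any vectors.

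For the first identity, apply the product rule with $\bm{b}=\bm{a}$: the right-hand side becomes $2\int D_{t}\bm{a}\cdot\bm{a}\,{d}x=2(D_{t}\bm{a},\bm{a})$, while the left-hand side is $D_{t}\int|\bm{a}|^{2}\,{d}x=\frac{1}{\Delta t}\bigl(\|\bm{a}^{n+1}\|^{2}-\|\bm{a}^{n}\|^{2}\bigr)$; dividing by $2$ gives the claim. Alternatively one can argue directly: $(D_{t}\bm{a},\bm{a})=\frac{1}{2\Delta t}(\bm{a}^{n+1}-\bm{a}^{n},\bm{a}^{n+1}+\bm{a}^{n})$, and by bilinearity and symmetry the cross terms $(\bm{a}^{n+1},\bm{a}^{n})$ and $(\bm{a}^{n},\bm{a}^{n+1})$ cancel, leaving $\frac{1}{2\Delta t}\bigl(\|\bm{a}^{n+1}\|^{2}-\|\bm{a}^{n}\|^{2}\bigr)$.

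There is no substantive obstacle here: the statement is a purely formal computation, the discrete analogue of the Leibniz rule for the Crank--Nicolson average and difference. The only thing to be careful about is the bookkeeping --- keeping the midpoint and difference conventions consistent and not dropping the factor $1/2$ in $\bm{a}:=(\bm{a}^{n+1}+\bm{a}^{n})/2$ --- so each part is a two-line display.
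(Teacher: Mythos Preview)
Your proposal is correct and matches the paper's approach: direct substitution of the definitions of $D_t\bm{a}$, $\bm{a}$, $D_t\bm{b}$, $\bm{b}$ into the right-hand side, followed by expansion and cancellation of the cross terms. The paper in fact leaves this lemma unproved (treating it as a standard Crank--Nicolson identity), but the omitted computation is exactly the one you outline; your extra step of deducing the first identity from the product rule with $\bm{b}=\bm{a}$ is a clean way to organize it.
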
 
The Gauss law $\nabla\cdot \bm{B}=0$ is automatically preserved in \eqref{main:eq3d}. Namely, we have
\begin{theorem}\label{divBd} 
If the initial data satisfies $\nabla\cdot\bm{B}_{h}^{0}=0$, then we have 
\begin{equation} 
\nabla\cdot\bm{B} _{h}^{n}= 0 \quad \forall n\geq 0. 
\end{equation} 
\end{theorem}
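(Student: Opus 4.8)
The plan is to show that the discrete magnetic field stays divergence-free in the strong sense, using the fact that $\bm{B}_h^n\in H_0^h(\div,\Omega)$ lives in the $H(\div)$-conforming finite element space and that the scheme updates $\bm{B}_h$ via a discrete curl. First I would observe that equation \eqref{eqn2} holds for \emph{all} $\bm{C}_h\in H_0^h(\div,\Omega)$, and that $\nabla\times\bm{E}_h\in H_0^h(\div,\Omega)$ because $\bm{E}_h\in H_0^h(\curl,\Omega)$ and the discrete de~Rham complex maps $H_0^h(\curl,\Omega)$ into $H_0^h(\div,\Omega)$ exactly at the continuous level (the curl of an edge element function is an $H(\div)$-conforming face element function, with the right boundary trace). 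Since $D_t\bm{B}_h = (\bm{B}_h^{n+1}-\bm{B}_h^n)/\Delta t$ is also in $H_0^h(\div,\Omega)$, equation \eqref{eqn2} says that the $L^2$ inner product of $D_t\bm{B}_h+\nabla\times\bm{E}_h$ against every element of $H_0^h(\div,\Omega)$ vanishes; both terms being in that space, we conclude $D_t\bm{B}_h+\nabla\times\bm{E}_h=0$ pointwise in $\Omega$, i.e. $\bm{B}_h^{n+1}=\bm{B}_h^n-\Delta t\,\nabla\times\bm{E}_h$.

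Next I would apply the divergence operator to this identity. Since $\div(\nabla\times\bm{E}_h)=0$ identically (this holds exactly, not just weakly, because $\bm{E}_h$ is a genuine $H(\curl)$ function and $\div\circ\curl=0$), we obtain $\nabla\cdot\bm{B}_h^{n+1}=\nabla\cdot\bm{B}_h^n$ in $L^2(\Omega)$. By induction on $n$, starting from the hypothesis $\nabla\cdot\bm{B}_h^0=0$, this gives $\nabla\cdot\bm{B}_h^n=0$ for all $n\geq 0$.

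A cleaner alternative, which I would mention, uses the exactness of the discrete complex directly: equation \eqref{eqn2} shows $D_t\bm{B}_h = -\nabla\times\bm{E}_h \in \mathcal{R}(\curl)$ within the discrete complex, and since the discrete sequence is a complex, $\div(\nabla\times\bm{E}_h)=0$, so again $\nabla\cdot\bm{B}_h^{n+1}-\nabla\cdot\bm{B}_h^n=0$ at each step. Either way the conclusion is the same.

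The only subtle point—and the one I would be most careful about—is the passage from the weak statement \eqref{eqn2} to the strong identity $D_t\bm{B}_h+\nabla\times\bm{E}_h=0$. This step is legitimate precisely because $\nabla\times\bm{E}_h$ already belongs to the test space $H_0^h(\div,\Omega)$: the discrete curl here is the \emph{genuine} curl (not the discrete adjoint $\nabla_h\times$), so no projection is lost, and \eqref{eqn2} becomes $(\,D_t\bm{B}_h+\nabla\times\bm{E}_h,\ \bm{C}_h\,)=0$ for all $\bm{C}_h$ in a space containing $D_t\bm{B}_h+\nabla\times\bm{E}_h$ itself, forcing that element to be zero. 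I expect this to be the main obstacle only in the sense that it is the one place where one must invoke the structural property of the FEEC spaces (the inclusion $\curl\,H_0^h(\curl,\Omega)\subset H_0^h(\div,\Omega)$); the rest is a one-line induction. I would then remark that this is exactly why the magnetic helicity $\int\bm{A}_h\cdot\bm{B}_h\,dx$ is well defined at the discrete level, as anticipated in the introduction.
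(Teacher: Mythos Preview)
Your proof is correct and follows essentially the same approach as the paper: deduce from \eqref{eqn2} that $D_t\bm{B}_h=-\nabla\times\bm{E}_h$ as an identity in $H_0^h(\div,\Omega)$, then take the divergence and induct. Your version is in fact more careful than the paper's, since you explicitly justify the passage from the weak equation to the strong identity via the inclusion $\nabla\times H_0^h(\curl,\Omega)\subset H_0^h(\div,\Omega)$, a point the paper leaves implicit.
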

The proof is the same as in \cite{hu2014stable}. For completeness, we include the proof here.
\begin{proof} 
From the equation 
\begin{equation}
\left( {D_t} \bm{B}_{h}, \bm{C}_{h} \right)  + (\nabla\times \bm{E}_{h}, \bm{C}_{h}) = 0 \quad \forall \bm{C}_{h} \in H_0^h({\rm div}, \Omega).  
\end{equation} 
We have that ${D_t} \bm{B}_{h} = -\nabla \times \bm{E}_{h}$. Taking divergence, we obtain that
\begin{equation}
{D_t} \nabla\cdot \bm{B}_{h} = 0. 
\end{equation}  
This completes the proof. 
\end{proof} 

We now show the energy law for \eqref{main:eq3d}:  
\begin{theorem}
The discrete energy law holds: 
\begin{equation}\label{energy-eq}
\left [ ({D_t} \bu_{h}, \bu_{h}) + \textsf{c} ({D_t} \bm{B}_{h}, \bm{B}_{h} ) \right] + R_{e}^{-1} \|\nabla\times \bu_{h}\|^2 + \textsf{c} R_m^{-1} \|\bm{j}_{h}\|^2 = (\bm{f}, \bu_{h}),
\end{equation}
and
{
\begin{align}\nonumber
(\|\bm{u}_{h}^{n+1}\|^{2}+\text{c} &\|\bm{B}_{h}^{n+1}\|^{2})\leq \frac{1}{2}(\|\bm{u}_{h}^{0}\|^{2}+\text{c} \|\bm{B}_{h}^{0}\|^{2}) -\frac{1}{2}R_{e}^{-1}\sum_{j=0}^{n}(\Delta t)\|\nabla\times \bm{u}_{h}^{j+1/2}\|^{2}\\ \label{energy-ineq}
&-\textsf{c} R_{m}^{-1}\sum_{j=0}^{n}(\Delta t)\|\nabla_{h}\times \bm{B}_{h}^{j+1/2}\|^{2}+\frac{1}{2}{(\Delta t)}c_{p}^{2}R_{e}\sum_{j=0}^{n}\| \bm{f}^{j+1/2}\|^{2},
\end{align}}
where $c_{p}$ is the constant in the Poincar\'{e} inequality $\|\bm{u}_{h}\|\leq c_{p}\|\nabla\times \bm{u}_{h}\|$ for $\bm{u}_{h}$ satisfying $\nabla_{h}\cdot\bm{u}_{h}=0$.
\end{theorem}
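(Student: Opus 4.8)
The plan is to derive the identity \eqref{energy-eq} by testing the weak form \eqref{main:eq3d} with a suitable combination of the discrete unknowns themselves, and then to obtain \eqref{energy-ineq} from it by the Crank--Nicolson identity recorded in the lemma above together with Cauchy--Schwarz, the discrete Poincar\'e inequality, and Young's inequality.

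For \eqref{energy-eq} I would take $\bv_{h}=\bu_{h}$ in \eqref{eqn1}, $\bm{C}_{h}=\bm{B}_{h}$ in \eqref{eqn2}, $\bm{G}_{h}=\bm{j}_{h}$ in \eqref{eqn3}, $\bm{k}_{h}=\bm{E}_{h}$ in \eqref{eqn5}, and $Q_{h}=P_{h}$ in \eqref{eqn7}. Several terms simplify at once: the Lamb term vanishes pointwise, $(\bu_{h}\times\bm{\omega}_{h},\bu_{h})=0$; the pressure term disappears by \eqref{eqn7}, $(\nabla P_{h},\bu_{h})=0$; equation \eqref{eqn5} rewrites $(\nabla\times\bm{E}_{h},\bm{B}_{h})=(\bm{j}_{h},\bm{E}_{h})$; and \eqref{eqn3} gives $(\bm{E}_{h},\bm{j}_{h})=R_{m}^{-1}\|\bm{j}_{h}\|^{2}-(\bu_{h}\times\bm{H}_{h},\bm{j}_{h})$. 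The decisive step is the cancellation of the two magnetohydrodynamic coupling terms: the Lorentz contribution from \eqref{eqn1} is $-\textsf{c}(\bm{j}_{h}\times\bm{H}_{h},\bu_{h})=\textsf{c}(\bu_{h}\times\bm{H}_{h},\bm{j}_{h})$ by the scalar triple product identity, and it exactly annihilates the term $-\textsf{c}(\bu_{h}\times\bm{H}_{h},\bm{j}_{h})$ arising on the magnetic side. Adding \eqref{eqn1} to $\textsf{c}$ times \eqref{eqn2} after these substitutions leaves precisely \eqref{energy-eq}; this is the discrete counterpart of the cancellation behind the continuous law \eqref{resistive:energylaw}. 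Only \eqref{eqn1}, \eqref{eqn2}, \eqref{eqn3}, \eqref{eqn5} and \eqref{eqn7} enter, and \eqref{eqn7} is exactly the discrete divergence-free relation $\nabla_{h}\cdot\bu_{h}=0$.

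For \eqref{energy-ineq} I would apply the Crank--Nicolson identity to replace $(D_{t}\bu_{h},\bu_{h})$ by $\tfrac{1}{2\Delta t}(\|\bu_{h}^{n+1}\|^{2}-\|\bu_{h}^{n}\|^{2})$ and $(D_{t}\bm{B}_{h},\bm{B}_{h})$ by the analogous expression, and use $\bm{j}_{h}=\nabla_{h}\times\bm{B}_{h}$ — the consequence of \eqref{eqn5} noted in the Remark — to write $\|\bm{j}_{h}\|=\|\nabla_{h}\times\bm{B}_{h}\|$. The forcing term is bounded by $(\bm{f},\bu_{h})\le\|\bm{f}\|\,\|\bu_{h}\|\le c_{p}\|\bm{f}\|\,\|\nabla\times\bu_{h}\|$, the discrete Poincar\'e inequality being available since $\nabla_{h}\cdot\bu_{h}=0$; Young's inequality then moves a fraction of $R_{e}^{-1}\|\nabla\times\bu_{h}\|^{2}$ to the left-hand side and leaves a residual of the form $\tfrac{1}{2}c_{p}^{2}R_{e}\|\bm{f}\|^{2}$ on the right. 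Multiplying the resulting per-step estimate by $\Delta t$ and summing over $j=0,\dots,n$ telescopes the norm increments and produces \eqref{energy-ineq}.

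I do not expect any serious obstacle: the work is essentially bookkeeping, and the one point that needs care is tracking the signs in the triple-product identity that makes the Lorentz and magnetic-advection coupling terms cancel, exactly as in the proof of the continuous energy law. The auxiliary variables $\bm{\omega}_{h},\bm{j}_{h},\bm{E}_{h},\bm{H}_{h}$ must be kept straight, but no nontrivial estimate is required once the test functions above are chosen.
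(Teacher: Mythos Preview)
Your proposal is correct and follows essentially the same approach as the paper: test the momentum equation with $\bu_{h}$, the Faraday equation with $\bm{B}_{h}$, and use the triple-product cancellation between the Lorentz term and the magnetic advection term; then bound $(\bm{f},\bu_{h})$ via Cauchy--Schwarz, the discrete Poincar\'e inequality, and Young's inequality before summing in time. The only cosmetic difference is that the paper substitutes the operator identity $\bm{E}_{h}=R_{m}^{-1}\bm{j}_{h}-\mathbb{Q}_{h}^{\curl}(\bu_{h}\times\mathbb{Q}_{h}^{\curl}\bm{B}_{h})$ and unwinds the projection, whereas you obtain the same identities directly by testing \eqref{eqn5} with $\bm{E}_{h}$ and \eqref{eqn3} with $\bm{j}_{h}$; your route is arguably a bit cleaner but the content is identical.
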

\begin{proof}
Taking $\bv _{h}= \bu_{h}$ and $Q_{h}=P_{h}$ in the momentum equation of \eqref{main:eq3d}, we obtain 
\begin{eqnarray*} 
({D_t} \bu_{h}, \bu_{h})  + R_{e}^{-1} \|\nabla\times \bu_{h}\|^2 - \textsf{c} ([\nabla_h \times \bm{B}_{h}] \times \mathbb{Q}_h^{\rm curl} \bm{B}_{h}, \bu_{h}) = (\bm{f}, \bu_{h}). 
\end{eqnarray*} 
Moreover, we have that 
\begin{eqnarray*}
({D_t} \bm{B}_{h}, \bm{B}_{h}) &=& - (\nabla \times \bm{E}_{h}, \bm{B}_{h}) = - ( \nabla \times [R_m^{-1} \bm{j}_{h} - \mathbb{Q}_h^{\rm curl}(\bu_{h} \times \mathbb{Q}_h^{\rm curl} \bm{B}_{h})], \bm{B}_{h}) \\
&=&  - R_m^{-1} ( \bm{j}_{h}, \nabla_h \times \bm{B}_{h}) + (\bu_{h} \times \mathbb{Q}_h^{\rm curl} \bm{B}_{h}, \nabla_h \times \bm{B}_{h}) \\
&=&  - R_m^{-1} \| \bm{j}_{h}\|^2  + (\mathbb{Q}_h^{\rm curl}(\bu_{h} \times \mathbb{Q}_h^{\rm curl} \bm{B}_{h}), \bm{j}_{h})\\
&=&  - R_m^{-1} \| \bm{j}_{h}\|^2  + (\bu_{h} \times \mathbb{Q}_h^{\rm curl} \bm{B}_{h}, \bm{j}_{h}). 
\end{eqnarray*} 
Here for the last identity we used the fact that $\bm{j}_{h}\in H^{h}_{0}(\curl, \Omega)$.
 
Therefore, we have
\begin{eqnarray*}
- \textsf{c} (\bu_{h} \times \mathbb{Q}_h^{\rm curl} \bm{B}_{h}, \bm{j}_{h}) = \textsf{c} ( {D_t} \bm{B}_{h}, \bm{B}_{h})  + \textsf{c} R_m^{-1} \| \bm{j}_{h}\|_0^2.
\end{eqnarray*} 
This shows the equality \eqref{energy-eq}. Then \eqref{energy-ineq} follows from a sum and the estimate
\begin{align*}
\left|(\bm{f}, \bm{u}_{h})\right |\leq \|\bm{f}\|\|\bm{u}_{h}\|&\leq \frac{1}{2}R_{e}^{-1}c_{p}^{-2}\|\bm{u}_{h}\|^{2}+\frac{1}{2}c_{p}^{2}R_{e}\|\bm{f}\|^{2}\\
&\leq \frac{1}{2}R_{e}^{-1}\|\nabla\times \bm{u}_{h}\|^{2}+\frac{1}{2}c_{p}^{2}R_{e}\|\bm{f}\|^{2}.
\end{align*}
\end{proof}
\begin{remark}
We have another version of the energy estimates using a dual norm for the right hand side. Specifically, define 
$$
Z_{h}:=\{\bm{z}_{h}\in H_{0}^{h}(\curl, \Omega): \nabla_{h}\cdot \bm{z}_{h}=0\}.
$$
By the discrete Poincar\'e inequality, $\|\bm{z}_{h}\|_{Z_{h}}:=\|\nabla\times \bm{z}_{h}\|$ is a norm on $Z_h$. Then define the dual norm
$$
\|\bm f\|_{\ast}:=\sup_{\bm{v}_{h}\in Z_{h}}\frac{|\langle \bm f, \bm{v}_{h}\rangle|}{\|\bm{v}_{h}\|_{Z_{h}}}.
$$
By definition, $|\langle \bm f, \bm{v}_{h}\rangle|\leq \|\bm f\|_{\ast}\|\nabla\times\bm{v}_{h}\|$. Therefore we can remove the Poincar\'e constant in \eqref{energy-ineq} to get:
\begin{align*}
{\frac{1}{2}}(\|\bm{u}_{h}^{n+1}\|^{2}+\text{c} &\|\bm{B}_{h}^{n+1}\|^{2})\leq {\frac{1}{2}}(\|\bm{u}_{h}^{0}\|^{2}+\text{c} \|\bm{B}_{h}^{0}\|^{2}) -\frac{1}{2}R_{e}^{-1}\sum_{j=0}^{n}(\Delta t)\|\nabla\times \bm{u}_{h}^{j+1/2}\|^{2}\\ 
&-\textsf{c} R_{m}^{-1}\sum_{j=0}^{n}(\Delta t)\|\nabla_{h}\times \bm{B}_{h}^{j+1/2}\|^{2}+{(\Delta t)}R_{e}\sum_{j=0}^{n}\| \bm{f}^{j+1/2}\|_{\ast}^{2}.
\end{align*}
\end{remark}

We now discuss the magnetic and cross helicity for the discrete MHD system. The following theorems can be similarly stated and proved for  any contractible subdomain if the variables satisfy the  conditions \eqref{NS-bc} on  the boundary of the subdomain. Therefore we obtain identities for both local and global helicity. For simplicity of presentation, we focus on the helicity on $\Omega$, i.e., the global helicity.

\begin{theorem}\label{thm:dtHm}
For any solution of the discrete ideal MHD system \eqref{main:eq3d},  the following identity of the magnetic helicity holds: 
\begin{equation}\label{hmeq}  
{D_t} \int_{{\Omega}} \bm{B}_{h} \cdot \bm{A}_{h}\, dx  = R_{m}^{-1}\int_{{\Omega}}\bm{H}_{h}\cdot \bm{j}_{h}\, dx,
\end{equation} 
where $\bm{A}_{h}\in H_{0}^{h}(\curl, \Omega)$ is any vector potential of $\bm{B}_{h}$ satisfying $\nabla \times \bm{A}_{h}=\bm{B}_{h}$ in ${\Omega}$.
\end{theorem}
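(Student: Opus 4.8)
The plan is to build a particular discrete vector potential adapted to the time stepping and then differentiate $\int_{\Omega}\bm{B}_h\cdot\bm{A}_h\,dx$ term by term. First I would record that, by Theorem~\ref{divBd}, $\nabla\cdot\bm{B}_h^n=0$ for every $n$, so exactness of the discrete de~Rham complex on the contractible domain $\Omega$ yields some $\bm{A}_h^0\in H_0^h(\curl,\Omega)$ with $\nabla\times\bm{A}_h^0=\bm{B}_h^0$. The value $\int_{\Omega}\bm{B}_h^n\cdot\bm{A}_h^n\,dx$ does not depend on the choice of potential: if $\bm{A}_h'=\bm{A}_h+\nabla\psi_h$ with $\psi_h\in H_0^h(\grad,\Omega)$, then $(\bm{B}_h,\nabla\psi_h)=-(\nabla\cdot\bm{B}_h,\psi_h)=0$ after integration by parts (the boundary term drops since $\psi_h|_{\partial\Omega}=0$). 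Hence I am free to propagate the potential by the discrete ``temporal gauge''
\[
\bm{A}_h^{n+1}:=\bm{A}_h^{n}-\Delta t\,\bm{E}_h^{n+1/2}.
\]
Testing \eqref{eqn2} against all of $H_0^h(\div,\Omega)$ and using that $\nabla\times$ maps $H_0^h(\curl,\Omega)$ into $H_0^h(\div,\Omega)$, the weak statement upgrades to the pointwise identity $D_t\bm{B}_h=-\nabla\times\bm{E}_h$; applying $\nabla\times$ to the gauge relation then gives $\nabla\times\bm{A}_h^{n+1}=\bm{B}_h^{n+1}$, so the propagated $\bm{A}_h^{n+1}$ is again an admissible potential, and moreover $D_t\bm{A}_h=-\bm{E}_h$.

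Next I would differentiate. The discrete Leibniz rule from the preceding lemma, applied with $\bm{a}=\bm{B}_h$ and $\bm{b}=\bm{A}_h$, gives
\[
D_t\int_{\Omega}\bm{B}_h\cdot\bm{A}_h\,dx=\int_{\Omega}D_t\bm{B}_h\cdot\bm{A}_h\,dx+\int_{\Omega}\bm{B}_h\cdot D_t\bm{A}_h\,dx,
\]
where $\bm{A}_h$ and $\bm{B}_h$ on the right are the time midpoints, the latter coinciding with the $\bm{B}_h$ appearing in \eqref{main:eq3d}. In the first term I substitute $D_t\bm{B}_h=-\nabla\times\bm{E}_h$ and integrate by parts; the boundary contribution vanishes because $\bm{E}_h\times\bm{n}=0$ on $\partial\Omega$, and since $\nabla\times(\text{midpoint }\bm{A}_h)=\text{midpoint }\bm{B}_h$, this term equals $-(\bm{E}_h,\bm{B}_h)$. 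The second term equals $-(\bm{B}_h,\bm{E}_h)$ straight from the gauge relation. Hence
\[
D_t\int_{\Omega}\bm{B}_h\cdot\bm{A}_h\,dx=-2(\bm{E}_h,\bm{B}_h).
\]

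It then remains to re-express $(\bm{E}_h,\bm{B}_h)$ through the constitutive laws. From \eqref{eqn3}, $\bm{E}_h=R_m^{-1}\bm{j}_h-\mathbb{Q}_h^{\curl}(\bu_h\times\bm{H}_h)$, the projection being harmless on $\bm{j}_h$ and $\bm{E}_h$ since these already lie in $H_0^h(\curl,\Omega)$. Pairing with $\bm{B}_h$ and using \eqref{eqn6} together with the self-adjointness property \eqref{move-P} of the $L^2$ projection, I get $(\bm{j}_h,\bm{B}_h)=(\bm{j}_h,\mathbb{Q}_h^{\curl}\bm{B}_h)=(\bm{j}_h,\bm{H}_h)$ and $(\mathbb{Q}_h^{\curl}(\bu_h\times\bm{H}_h),\bm{B}_h)=(\bu_h\times\bm{H}_h,\bm{H}_h)=0$, the last step because $\bu_h\times\bm{H}_h$ is pointwise orthogonal to $\bm{H}_h$. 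Therefore $(\bm{E}_h,\bm{B}_h)=R_m^{-1}(\bm{j}_h,\bm{H}_h)$, and the previous display becomes $D_t\int_{\Omega}\bm{B}_h\cdot\bm{A}_h\,dx=-2R_m^{-1}\int_{\Omega}\bm{H}_h\cdot\bm{j}_h\,dx$, the exact discrete counterpart of \eqref{id:Hm} with the boundary term removed; in particular, in the ideal limit $R_m^{-1}=0$ the discrete magnetic helicity is conserved.

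The step I expect to be the crux is not the algebra but securing its exactness: that \eqref{eqn2} may be read as the pointwise relation $D_t\bm{B}_h=-\nabla\times\bm{E}_h$ (which needs the discrete complex to be a genuine subcomplex, so $\nabla\times\bm{E}_h\in H_0^h(\div,\Omega)$), that an admissible discrete potential $\bm{A}_h\in H_0^h(\curl,\Omega)$ with the homogeneous tangential trace actually exists (discrete exactness with the boundary conditions imposed), and that the integration by parts incurs no spurious face terms for the finite element functions involved. Equally essential is the structural choice $\bm{H}_h=\mathbb{Q}_h^{\curl}\bm{B}_h$ in \eqref{eqn6} together with the projected Ohm law \eqref{eqn3}: it is exactly this pair that makes the would-be nonphysical term $(\bu_h\times\bm{H}_h,\bm{H}_h)$ vanish identically, in contrast with the scheme of Section~\ref{helicity-polution}, where the analogous quantity reduces only to $(\bu_h\times\bm{B}_h,(\mathbb{Q}_h^{\curl}-\mathbb{I})\bm{B}_h)$, which is generally nonzero.
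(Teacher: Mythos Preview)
Your argument is correct and follows essentially the same route as the paper: apply the discrete Leibniz rule, reduce $D_t\int_\Omega\bm{B}_h\cdot\bm{A}_h\,dx$ to $-2(\bm{E}_h,\bm{B}_h)$, and then use \eqref{eqn3} and \eqref{eqn6} to see that $(\bm{E}_h,\bm{B}_h)=R_m^{-1}(\bm{H}_h,\bm{j}_h)$ with the advection contribution $(\bu_h\times\bm{H}_h,\bm{H}_h)$ vanishing identically. The only cosmetic difference is that you propagate $\bm{A}_h$ in the temporal gauge $D_t\bm{A}_h=-\bm{E}_h$ from the outset, whereas the paper keeps a general potential and writes $D_t\bm{A}_h=-\bm{E}_h-\nabla\phi_h$, then discards $\nabla\phi_h$ against the divergence-free $\bm{B}_h$; your choice is a clean shortcut but not a different idea. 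Note also that both your computation and the paper's own proof actually yield $-2R_m^{-1}\int_\Omega\bm{H}_h\cdot\bm{j}_h\,dx$, matching the continuous identity \eqref{id:Hm}; the stated right-hand side in \eqref{hmeq} is missing the factor $-2$.
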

\begin{proof}
Since the magnetic Gauss law is precisely preserved, there exists $\bm{A}_{h}\in H^{h}_{0}({\curl}, {\Omega})$, such that $\nabla\times \bm{A}_{h}=\bm{B}_{h}$ in ${\Omega}$. 
Hence
\begin{eqnarray*} 
&&{D_t} \int_{{\Omega}} \bm{B}_{h}\cdot \bm{A}_{h} \, dx= \int_{{\Omega}} \nabla \times {D_t} \bm{A} _{h}\cdot \bm{A}_{h}\, dx + \int_{{\Omega}} {D_t} \bm{A} _{h}\cdot \bm{B}_{h} \, dx\\  
&=& \int_{{\Omega}} {D_t} \bm{A}_{h} \cdot \nabla \times \bm{A}_{h}\, dx + \int_{{\Omega}}  {D_t} \bm{A} _{h}\cdot \bm{B}_{h}\, dx = 2 \int_{{\Omega}} {D_t} \bm{A}_{h} \cdot \bm{B}_{h}\, dx. 
\end{eqnarray*} 
Note that ${D_t} \bm{B}_{h} = -\nabla \times \bm{E}_{h}$ and  ${D_t} \bm{B}_{h} = \nabla \times {D_t} \bm{A}_{h}$. Therefore, there exists $\phi\in H^{h}_{0}({{\grad}}, {\Omega})$ such that ${D_t} \bm{A}_{h} = -\bm{E}_{h} - \nabla \phi_{h}$. This means 
\begin{equation} 
 \int_{{\Omega}}  {D_t} \bm{A} _{h} \cdot  \bm{B} _{h}\, dx = - \int_{{\Omega}} (\bm{E}_{h} + \nabla \phi_{h})\cdot \bm{B}_{h}\, dx = - \int_{{\Omega}} \bm{E}_{h}\cdot  \bm{B}_{h}\, dx. 
\end{equation} 
However, $\bm{E}_{h} = - {\mathbb{Q}_h^{\curl}} ( \bu_{h} \times {\mathbb{Q}_h^{\curl}} \bm{B}_{h})+R_{m}^{-1}{\mathbb{Q}_h^{\curl}}\bm{j}_{h}$ by  \eqref{eqn3} and \eqref{eqn6}. Therefore, 
\begin{equation} 
(\bm{B}_{h},\bm{E}_{h}) = R_{m}^{-1}\left (\bm{B}_{h}, {\mathbb{Q}}_{h}^{\curl}\bm{j}_{h}\right )= R_{m}^{-1}\left ({\mathbb{Q}}_{h}^{\curl}\bm{B}_{h}, \bm{j}_{h}\right ). 
\end{equation}
This completes the proof. 
\end{proof}

We now show identities for the cross helicity. 
\begin{theorem}\label{thm:dtHc}
The following identity holds for the cross helicity:
\begin{equation}\label{hceq}  
{D_t} \int \bu_{h} \cdot \bm{B} _{h}\, dx=  -R_{e}^{-1}(\nabla\times \bm{u}_{h}, \nabla\times  \bm{H}_{h})-R_{m}^{-1} ( \nabla\times\bm{u}_{h},  \bm{j}_{h})+(\bm{f}, \bm{H}_{h}).
\end{equation} 
\end{theorem}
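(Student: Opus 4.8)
The plan is to carry over to the fully discrete scheme the structure behind the continuous identity \eqref{id:Hc}, with the Crank--Nicolson product rule (the lemma above) playing the role of the Leibniz rule and the projection identity \eqref{move-P} handling the discrete Hodge duals. By that product rule,
\begin{equation*}
D_t\int_\Omega\bu_h\cdot\bm{B}_h\,dx=(D_t\bu_h,\bm{B}_h)+(\bu_h,D_t\bm{B}_h),
\end{equation*}
where $\bu_h,\bm{B}_h$ without superscript denote the midpoint values. I will evaluate $(D_t\bu_h,\bm{B}_h)$ from the momentum equation \eqref{eqn1} and $(\bu_h,D_t\bm{B}_h)$ from Faraday's law \eqref{eqn2}.

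For the first term, since $D_t\bu_h\in H^h_0(\curl,\Omega)$ and $\bm{H}_h=\mathbb{Q}^{\curl}_h\bm{B}_h$ by \eqref{eqn6}, the property \eqref{move-P} gives $(D_t\bu_h,\bm{B}_h)=(D_t\bu_h,\bm{H}_h)$, so it is legitimate to test \eqref{eqn1} with $\bv_h=\bm{H}_h$. Two terms then drop: the Lorentz term because $(\bm{j}_h\times\bm{H}_h)\cdot\bm{H}_h\equiv 0$, and the pressure term because $(\nabla P_h,\bm{H}_h)$ equals $(\nabla P_h,\bm{B}_h)$ by \eqref{move-P} (as $\nabla P_h\in H^h_0(\curl,\Omega)$), which vanishes upon integration by parts since $\nabla\cdot\bm{B}_h=0$ (Theorem \ref{divBd}, assuming $\nabla\cdot\bm{B}^0_h=0$) and $P_h=0$ on $\partial\Omega$. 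This leaves
\begin{equation*}
(D_t\bu_h,\bm{H}_h)=(\bu_h\times\bm{\omega}_h,\bm{H}_h)-R_e^{-1}(\nabla\times\bu_h,\nabla\times\bm{H}_h)+(\bm{f},\bm{H}_h).
\end{equation*}

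For the second term, \eqref{eqn2} shows $D_t\bm{B}_h=-\nabla\times\bm{E}_h$ in $H^h_0(\div,\Omega)$, hence $(\bu_h,D_t\bm{B}_h)=-(\bu_h,\nabla\times\bm{E}_h)$; integrating by parts via the $H(\curl)$ Green's formula and using $\bu_h\times\bm{n}=0$ on $\partial\Omega$ to annihilate the boundary term gives $(\bu_h,D_t\bm{B}_h)=-(\nabla\times\bu_h,\bm{E}_h)$. Next, \eqref{eqn3} together with \eqref{eqn6} yields $\bm{E}_h=R_m^{-1}\bm{j}_h-\mathbb{Q}^{\curl}_h(\bu_h\times\bm{H}_h)$, and then \eqref{move-P} combined with \eqref{eqn4} (i.e.\ $\bm{\omega}_h=\mathbb{Q}^{\curl}_h(\nabla\times\bu_h)$) gives
\begin{equation*}
(\bu_h,D_t\bm{B}_h)=-R_m^{-1}(\nabla\times\bu_h,\bm{j}_h)+(\bm{\omega}_h,\bu_h\times\bm{H}_h).
\end{equation*}

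Adding the two identities, the decisive step is the cancellation $(\bu_h\times\bm{\omega}_h,\bm{H}_h)+(\bm{\omega}_h,\bu_h\times\bm{H}_h)=0$, which is nothing but the cyclic symmetry of the scalar triple product applied pointwise, $\bm{H}_h\cdot(\bu_h\times\bm{\omega}_h)=-\bm{\omega}_h\cdot(\bu_h\times\bm{H}_h)$; what survives is exactly the right-hand side of \eqref{hceq}. I do not expect a genuine obstacle here: as with the energy law \eqref{energy-eq}, the proof is a matter of testing with the right objects and moving $L^2$ projections around via \eqref{move-P}. The one point that must line up, and the reason the scheme introduces $\bm{\omega}_h$ and $\bm{H}_h$ as independent $H(\curl)$-variables, is that the Lamb nonlinearity $\bu_h\times\bm{\omega}_h$ in \eqref{eqn1} and the motional term $\bu_h\times\bm{H}_h$ in Ohm's law \eqref{eqn3} are discretized against the same space with the same projection $\mathbb{Q}^{\curl}_h$, so that after the integration by parts on Faraday's law the two triple products carry matching arguments and cancel.
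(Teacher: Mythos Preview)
Your proof is correct and follows essentially the same route as the paper's: split via the Crank--Nicolson product rule, test \eqref{eqn1} with $\bm{H}_h=\mathbb{Q}_h^{\curl}\bm{B}_h$, use $D_t\bm{B}_h=-\nabla\times\bm{E}_h$ together with the expression for $\bm{E}_h$ from \eqref{eqn3}, and cancel the two triple-product terms. You are in fact more explicit than the paper about why the Lorentz term $(\bm{j}_h\times\bm{H}_h,\bm{H}_h)$ and the pressure term $(\nabla P_h,\bm{H}_h)=(\nabla P_h,\bm{B}_h)$ vanish, steps the paper handles silently.
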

\begin{proof}
Taking $\bv_{h}={\mathbb{Q}_h^{\curl}} \bm{B}_{h}$, we have from  \eqref{eqn1}: 
\begin{subeqnarray}
\left( {D_t} \bm{u}_{h}, \bm{B}_{h} \right)+ ( {\mathbb{Q}_h^{\curl}} [ \nabla \times \bm{u}_{h}] \times \bm{u}_{h}, {\mathbb{Q}_h^{\curl}} \bm{B}_{h}) + (\nabla p_{h}, \bm{B}_{h}) \nonumber \\
+ R_{e}^{-1}(\nabla\times \bm{u}_{h}, \nabla\times {\mathbb{Q}_h^{\curl}} \bm{B}_{h})= (\bm{f}, {\mathbb{Q}_h^{\curl}} \bm{B}_{h}).  
\end{subeqnarray}
We also note  by  \eqref{eqn3} and \eqref{eqn6} that 
\begin{subeqnarray} 
\bm{E}_{h}  = -{\mathbb{Q}_h^{\curl}} [ \bu_{h} \times {\mathbb{Q}_h^{\curl}} \bm{B}_{h}]+ R_{m}^{-1}\bm{j}_{h}. 
\end{subeqnarray} 
On the other hand, we have that 
\begin{subeqnarray}
{D_t} \bm{B}_{h} = - \nabla \times \bm{E}_{h} = \nabla \times {\mathbb{Q}_h^{\curl}}  [ \bu_{h} \times {\mathbb{Q}_h^{\curl}} \bm{B}_{h}] - R_{m}^{-1}\nabla\times \bm{j}_{h}.
\end{subeqnarray} 
Consequently,
\begin{eqnarray*}
&&{D_t} \int_{{\Omega}} \bu_{h} \cdot \bm{B}_{h} \, dx= \left ( D_{t}\bu_{h}, \bm{B}_{h} \right ) + \left ( \bu_{h}, D_{t}\bm{B}_{h} \right )\\
&=& -( ({\mathbb{Q}_h^{\curl}}\nabla \times \bu_{h}) \times \bu_{h}, {\mathbb{Q}_h^{\curl}} \bm{B}_{h} ) -  (\nabla p_{h}, \bm{B}_{h}) -R_{e}^{-1}(\nabla\times \bm{u}_{h}, \nabla\times {\mathbb{Q}_h^{\curl}} \bm{B}_{h}) \\
&& + (\bu_{h},  \nabla \times {\mathbb{Q}_h^{\curl}}  [ \bu_{h} \times {\mathbb{Q}_h^{\curl}} \bm{B}_{h}])-R_{m}^{-1} ( \nabla\times\bm{u}_{h}, \bm{j}_{h}  )+(\bm{f}, {\mathbb{Q}_h^{\curl}} \bm{B}_{h})\\
&=&  -R_{e}^{-1}(\nabla\times \bm{u}_{h}, \nabla\times \bm{H}_{h})-R_{m}^{-1} (\bm{\omega}_{h},  \bm{j}_{h})+(\bm{f}, \bm{H}_{h}).
\end{eqnarray*}
\end{proof}
From Theorem \ref{thm:dtHm} and Theorem \ref{thm:dtHc}, we see that the discrete magnetic helicity and the discrete cross helicity are both conserved in the ideal MHD limit with suitable boundary conditions. We summarize this result as follows.
\begin{theorem}\label{thm:conservation}
Assume that $(\bm{f}, \bm{H}_{h})=0$. Then we have the helicity conservation in the ideal MHD limit:
$$
{D_t} \int_{{\Omega}} \bm{B}_{h} \cdot \bm{A}_{h}\, dx=0, \quad {D_t} \int_{{\Omega}} \bm{B}_{h} \cdot \bm{u}_{h}\, dx=0,
$$
i.e.,
$$
 \int_{{\Omega}} \bm{B}^{n}_{h} \cdot \bm{A}^{n}_{h}\, dx=\cdots=\int_{{\Omega}} \bm{B}^{0}_{h} \cdot \bm{A}^{0}_{h}\, dx,
\quad
   \int_{{\Omega}} \bm{B}^{n}_{h} \cdot \bm{u}^{n}_{h}\, dx=\cdots=\int_{{\Omega}} \bm{B}^{0}_{h} \cdot \bm{u}^{0}_{h}\, dx.
$$
\end{theorem}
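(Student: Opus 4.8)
The plan is to derive Theorem~\ref{thm:conservation} directly from the two helicity identities already established, namely Theorem~\ref{thm:dtHm} and Theorem~\ref{thm:dtHc}, specialized to the ideal regime. Recall the convention fixed in Section~\ref{sec:intro}: the ideal MHD system is \eqref{main:eq3d} with the terms carrying $R_m^{-1}$ and $R_e^{-1}$ dropped, i.e. formally $R_e^{-1}=R_m^{-1}=0$. All substantive work — the cancellations coming from the $H(\curl)$-based formulation, the repeated use of the projection symmetry \eqref{move-P}, and the representation $D_t\bm{A}_h=-\bm{E}_h-\nabla\phi_h$ — is contained in those two theorems, so what remains is essentially bookkeeping together with one point of well-posedness.

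First I would address the magnetic helicity. Theorem~\ref{divBd} guarantees $\nabla\cdot\bm{B}_h^n=0$ for all $n$, so by exactness of the discrete de~Rham complex there is, at each time level, a discrete potential $\bm{A}_h^n\in H_0^h(\curl,\Omega)$ with $\nabla\times\bm{A}_h^n=\bm{B}_h^n$. I would first record that $\int_\Omega\bm{B}_h^n\cdot\bm{A}_h^n\,dx$ is independent of this choice: two such potentials differ by an element of $\mathcal{N}(\curl)=\mathcal{R}(\grad)$ inside $H_0^h(\curl,\Omega)$, hence by $\nabla\phi_h$ with $\phi_h\in H_0^h(\grad,\Omega)$, and integration by parts gives $(\bm{B}_h^n,\nabla\phi_h)=-(\nabla\cdot\bm{B}_h^n,\phi_h)=0$ using $\bm{B}_h^n\cdot\bm{n}=0$ on $\partial\Omega$ and $\nabla\cdot\bm{B}_h^n=0$. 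Thus the discrete magnetic helicity is a well-defined, gauge-invariant functional, and the chain of equalities in the statement is meaningful. Now apply Theorem~\ref{thm:dtHm}: with $R_m^{-1}=0$ the right-hand side of \eqref{hmeq} vanishes, so $D_t\int_\Omega\bm{B}_h\cdot\bm{A}_h\,dx=0$, i.e. $\int_\Omega\bm{B}_h^{n+1}\cdot\bm{A}_h^{n+1}\,dx=\int_\Omega\bm{B}_h^{n}\cdot\bm{A}_h^{n}\,dx$ for every $n$.

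For the cross helicity I would invoke Theorem~\ref{thm:dtHc}: setting $R_e^{-1}=R_m^{-1}=0$ removes the first two terms on the right-hand side of \eqref{hceq}, while the hypothesis $(\bm{f},\bm{H}_h)=0$ removes the last; hence $D_t\int_\Omega\bm{u}_h\cdot\bm{B}_h\,dx=0$ for every $n$. Finally, telescoping these per-step identities over $n=0,1,\dots$ gives $\int_\Omega\bm{B}_h^n\cdot\bm{A}_h^n\,dx=\int_\Omega\bm{B}_h^0\cdot\bm{A}_h^0\,dx$ and $\int_\Omega\bm{B}_h^n\cdot\bm{u}_h^n\,dx=\int_\Omega\bm{B}_h^0\cdot\bm{u}_h^0\,dx$, which is precisely the assertion. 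There is no real obstacle here; the only step deserving care is the gauge-invariance remark above, which rests on Theorem~\ref{divBd} and is what makes the magnetic-helicity statement independent of the (a priori time-step-by-time-step) choice of discrete vector potential.
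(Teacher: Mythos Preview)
Your proposal is correct and follows essentially the same approach as the paper: the paper simply remarks that Theorem~\ref{thm:conservation} is an immediate consequence of Theorems~\ref{thm:dtHm} and~\ref{thm:dtHc} by setting $R_e^{-1}=R_m^{-1}=0$ and using $(\bm{f},\bm{H}_h)=0$, without giving a separate proof block. Your added gauge-invariance check is a welcome clarification but is not needed beyond what the paper already uses implicitly.
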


The helicity provides a lower bound for  the energy \cite{arnold1999topological}. Thanks to the discrete de Rham complex and its properties and a judicious choice of the discrete formulation of the MHD equations, this bound can be carried over to the discrete level, supplying a control of the (local) discrete energy from below. We focus on the magnetic helicity, although a similar result holds for any divergence-free field.
\begin{proposition}\label{prop:lower-bound}
There exists a positive constant $C$ such that
\begin{equation} 
\mathcal{H}_{m}:=\int_{\Omega}\bm{B}_{h}\cdot \bm{A}_{h}\, {d}x\leq C \|\bm{B}_{h}\|^{2}.
\end{equation} 
\end{proposition}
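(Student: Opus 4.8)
The plan is to establish the inequality $\mathcal{H}_m = \int_\Omega \bm{B}_h \cdot \bm{A}_h \, dx \leq C\|\bm{B}_h\|^2$ by controlling the vector potential $\bm{A}_h$ in terms of $\bm{B}_h$. First I would invoke the discrete Poincar\'e inequality for the de~Rham complex: since $\bm{B}_h$ is precisely divergence-free (Theorem~\ref{divBd}), there exists a discrete vector potential $\bm{A}_h \in H_0^h(\curl,\Omega)$ with $\nabla\times \bm{A}_h = \bm{B}_h$, and we may choose $\bm{A}_h$ in the orthogonal complement of $\mathcal{N}(\curl)$ within $H_0^h(\curl,\Omega)$ (i.e. the discretely divergence-free part), so that the discrete Poincar\'e--Friedrichs inequality $\|\bm{A}_h\| \leq c_p \|\nabla\times \bm{A}_h\| = c_p \|\bm{B}_h\|$ holds, with $c_p$ independent of $h$ thanks to the uniform discrete Poincar\'e inequality in FEEC.

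The main estimate then follows by Cauchy--Schwarz:
\begin{equation*}
\mathcal{H}_m = \int_\Omega \bm{B}_h \cdot \bm{A}_h \, dx \leq \|\bm{B}_h\|\,\|\bm{A}_h\| \leq c_p \|\bm{B}_h\|^2,
\end{equation*}
so one may take $C = c_p$. One subtlety: the quantity $\mathcal{H}_m$ is gauge invariant (its value does not depend on the choice of $\bm{A}_h$), so it suffices to prove the bound for the specific minimal-norm choice of $\bm{A}_h$ described above; for any other choice $\bm{A}_h' = \bm{A}_h + \nabla\phi_h$ the inner product $\int_\Omega \bm{B}_h\cdot\nabla\phi_h\,dx$ vanishes because $\bm{B}_h\in H_0^h(\div,\Omega)$ is divergence-free with vanishing normal trace. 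I would state this gauge-invariance reduction explicitly before applying Cauchy--Schwarz.

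The one place requiring care — the "main obstacle," though it is more a matter of citing the right tool than of genuine difficulty — is the $h$-uniformity of the constant $c_p$ in the discrete Poincar\'e inequality $\|\bm{A}_h\|\leq c_p\|\nabla\times\bm{A}_h\|$ on the space of discretely divergence-free fields. This is a standard consequence of the existence of uniformly bounded commuting projections (bounded cochain projections) in the finite element exterior calculus framework \cite{arnold2018finite,Arnold.D;Falk.R;Winther.R.2006a}, which also guarantees that the discrete complex is exact on the contractible domain $\Omega$ so that the potential $\bm{A}_h$ exists in the first place. With that tool in hand the proof is a two-line Cauchy--Schwarz argument; the analogous statement for the cross helicity (or any divergence-free discrete field paired with its discrete potential) follows verbatim.
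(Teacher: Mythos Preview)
Your proposal is correct and follows essentially the same approach as the paper: choose the gauge $\bm{A}_h\perp\mathcal{N}(\curl)$ (equivalently $(\bm{A}_h,\nabla\psi_h)=0$ for all $\psi_h\in H_0^h(\grad,\Omega)$), invoke the discrete Poincar\'e inequality to get $\|\bm{A}_h\|\leq C\|\nabla\times\bm{A}_h\|=C\|\bm{B}_h\|$, and conclude by Cauchy--Schwarz. Your additional remarks on gauge invariance and the $h$-uniformity of $c_p$ via bounded cochain projections are accurate and make the argument slightly more explicit than the paper's version.
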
 
\begin{proof}
Choose $\bm{A}_{h}\in H^{h}_{0}(\curl, \Omega)$ such that $\nabla\times \bm{A}_{h}=\bm{B}_{h}$,
$(\bm{A}_{h}, \nabla \psi_{h})=0, ~\forall \psi_{h}\in H_{0}^{h}(\grad, {\Omega})$, and  $\bm{A}_{h}\times \bm{n}=0$ on $\partial {\Omega}$. 
By the discrete Poincar\'{e} inequality \cite{Arnold.D;Falk.R;Winther.R.2006a,Hiptmair.R.2002a}, there exists a universal positive constant $C$ such that $\|\bm{A}_{h}\|\leq C\|\nabla\times \bm{A}_{h}\|=C\|\bm{B}_{h}\|$. Consequently,
\begin{equation} 
\mathcal{H}_{m}=\int_{\Omega}\bm{A}_{h}\cdot \bm{B}_{h}\, dx\leq \|\bm{A}_{h}\|\|\bm{B}_{h}\| \leq C \|\bm{B}_{h}\|^{2}. 
\end{equation} 
\end{proof}
Note that the same argument works for variables at each time step, i.e., $\bm{B}_{h}^{n}$ and $\bm{A}_{h}^{n}$. Then we get estimates at each time step 
$$
\int_{\Omega}\bm{B}_{h}^{n}\cdot \bm{A}_{h}^{n}\, dx\leq C\|\bm{B}_{h}^{n}\|^{2}, \quad \forall n,
$$
in addition to the estimates at midpoints 
$$
\int_{\Omega}\bm{B}_{h}^{n+1/2}\cdot \bm{A}_{h}^{n+1/2}\, dx\leq C\|\bm{B}_{h}^{n+1/2}\|^{2}, \quad \forall n,
$$
which is stated in Proposition \ref{prop:lower-bound} by our  convention of notation.


\section{Numerical Experiments} \label{sec:num}

We report a couple of numerical tests on the convergence and the helicity conservation of the proposed scheme. In particular, we investigate and compare helicity changes with various Reynolds numbers in different algorithms. The implementation is based on the FEniCS project \cite{alnaes2015fenics}, and we choose the finite element spaces in the lowest order discrete de~Rham sequence (first order N\'ed\'elec and Raviart-Thomas elements etc.). 


\subsection{Convergence of the algorithm} 
In this section, we carry out a 3D convergence test with the following form of solutions on the domain $\Omega = (0,1)^3$. Let  
\begin{equation}
p = h(x) h(y) h(z),  
\end{equation} 
where $h(\mu) = (\mu^2 - \mu)^2$. Further, we let  
\begin{equation}
g_1(t) = 4 - 2t, \quad g_2(t) = 1+t \quad \mbox{ and } \quad g_3(t) = 1-t. 
\end{equation}
We now introduce analytic velocity and magnetic fields that satisfy the boundary conditions. Namely, 
\begin{eqnarray*}
\bm{u} = \left( \begin{array}{c} -g_1 h'(x) h(y) h(z) 
\\ - g_2 h(x) h'(y) h(z)   \\  -g_3 h(x) h(y) h'(z) \end{array} \right ) \quad \mbox{ and } \quad \bm{B} = \bm{\omega} = \nabla \times \bm{u}. 
 \end{eqnarray*}
With this setting, $\bm{u} \times \bm{n} = 0$, $\bm{B} \cdot \bm{n} = 0$ and the modified pressure $P = |\bm{u}|^2/2 + p$ satisfies the boundary condition. Furthermore, it holds that $\nabla \cdot \bm{B} = 0$.   { In the tests below, we  include nonzero source terms in Algorithm \ref{alg:main} due to the choice of the true solution.} 

Before presenting the convergence results, we first make some remarks on the solvers of the coupled system. As we shall see, the coupled system is easy to solve, even though it has more independent variables than existing schemes, e.g., those in \cite{hu2014stable}. 

In the tests below, we will solve the coupled system \eqref{main:eq3d} with an iterative process, referred to as the outer iteration. In each outer iteration, the first step is to solve $\bm{u}_{h}$, $P_{h}$ by treating other terms in the momentum equation explicitly, i.e., solving the following problem: find $({\bm{u}}_h, {P}_h) \in H_0^h({\rm curl},\Omega) \times H_0^h({\rm grad}, \Omega)$ for a given ${\bm{F}}$ and $g$, such that 
\begin{subeqnarray}\label{up}
(\Delta t)^{-1}({\bm{u}}_h, \bm{v}_h)  + (\nabla {P}_h, \bm{v}_h) = ({\bm{F}}, \bm{v}_h), \quad \forall \bm{v}_h \in H^h_0({\rm curl}, \Omega) \\ 
({\bm{u}}, \nabla Q_h) = (g, Q_h), \quad \forall Q_h \in H^h_0({\rm grad},\Omega). 
\end{subeqnarray}
If $\bm{F}=0$, the above system boils down to a Poisson equation for $P$. We solve \eqref{up} by  an AMG-preconditioned minimum residual iterative method. Table \ref{tab:itnum} shows the uniform convergence with respect to the mesh size. After obtaining $\bm{u}_{h}$ and $P_{h}$ from solving \eqref{up}, we update other variables in \eqref{main:eq3d} by simple operations. For example, $\bm{\omega}_{h}$ and $\bm{E}_{h}$ are updated from \eqref{eqn4} and \eqref{eqn3} by the $L^{2}$ projections of $\nabla\times\bm{u}_{h}$ and $-\bm{u}_{h}\times \bm{B}_{h}$, respectively.

 The outer iterations typically takes about 4 to 5 iterations to achieve the appropriate tolerance, e.g., the difference of the $L^2$ norms between two consecutive iterations divided by the time step size is smaller than $10^{-7}$. As $R_e$ and $R_m$ become smaller, the convergence takes more nonlinear (outer) iterations. 
\begin{table}[h!!]
\centering
{\small{\begin{tabular}{|c||c|}
\hline
Mesh size &  {\textsf{Iteration Numbers}}  \\ 
\hline   
$h_x = h_y = h_z =  2^{-2}$   &  11 \\
\hline 
$h_x = h_y = h_z =  2^{-3}$   &  11  \\
\hline  
$h_x = h_y = h_z = 2^{-4}$   & 13 \\ 
\hline 
$h_x = h_y = h_z = 2^{-5}$   & 13  \\
\hline 
$h_x = h_y = h_z = 2^{-6}$   & 13  \\ 
\hline
\end{tabular}
\vspace{+3mm}\caption{Number of iterations of preconditioned MINRES to achieve relative tolerance $10^{-10}$ for solving \eqref{up}, $\Delta t=1$.
}\label{tab:itnum}}}
\end{table}

The convergence results are shown in Table \ref{table:convergence}.

\begin{table}
\begin{center}
{\small{\begin{tabular}{|c|c|c|c|c|c|c|}
\cline{1-7}
$h$  & $\|\bm{B} - \bm{B}_h\|_0$  & order & $\|\bm{u} - \bm{u}_h\|_{0}$ & order &$\|p - p_h\|_1$ & order 
\\ \cline{1-7} 
$2^{-2}$   & 1.60E-3 &  x      & 4.15E-4   &    x        & 2.15E-4 & x 
\\ \cline{1-7} 
$2^{-3}$   & 7.80E-4 & 1.04  & 2.18E-4   &   0.93    & 1.24E-4 & 0.79 
 \\ 
\cline{1-7} 
$2^{-4}$ & 3.40E-4 & 1.20 &  1.05E-4   &   1.05    &  6.44E-5 & 0.95 
\\ 
\cline{1-7} 
$2^{-5}$ & 1.63E-4 & 1.06  &  5.30E-5  &   0.99    &  3.25E-5 & 0.99
 \\ 
\cline{1-7} 
\end{tabular}\vspace{+3mm}\caption{Convergence results for the MHD system. The error is computed at the time level $T = 1$ with the Crank-Nicolson time stepping with $\Delta t = 0.01$. $R_e = R_m = 10^4$.} \label{table:convergence}}}
\end{center}
\end{table}


\subsection{Tests for Helicity conservation} 

In this section, we investigate the helicity behavior of our algorithms with various Reynolds numbers. We also compare the algorithm to another discretization based on existing schemes \cite{schotzau2004mixed}.

In the tests below, we use the following initial conditions for $\bm{u}_{h} ^{0}= (u_1, u_2, u_3)^T$: 
\begin{eqnarray*} 
u_1 &=& -\sin (\pi (x - 1/2)) \cos ( \pi (y - 1/2)) z(z-1)   \\ 
u_2 &=& \cos (\pi (x - 1/2)) \sin (\pi (y - 1/2)) z(z-1) \quad \mbox{ and } u_3 = 0. 
\end{eqnarray*}
For the magnetic field, we provide the following initial condition: 
\begin{eqnarray*}
\bm{B}_{h}^{0} = (-\sin (\pi x) \cos ( \pi y), \cos (\pi x ) \sin (\pi y ), 0)^T. 
\end{eqnarray*}
Figure \ref{init} shows the initial conditions for $\bm{u}_{h}$ and $\bm{B}_{h}$. We note that the desired boundary conditions are satisfied:  
\begin{equation} 
\bm{u}_{h}^{0} \times \bm{n} = 0 \quad \mbox{ and } \quad \bm{B}^{0}_{h} \cdot \bm{n} = 0 \mbox{ on } \partial \Omega. 
\end{equation} 
\begin{figure}[h]
\centering 
\includegraphics[width=6cm, height=6cm]{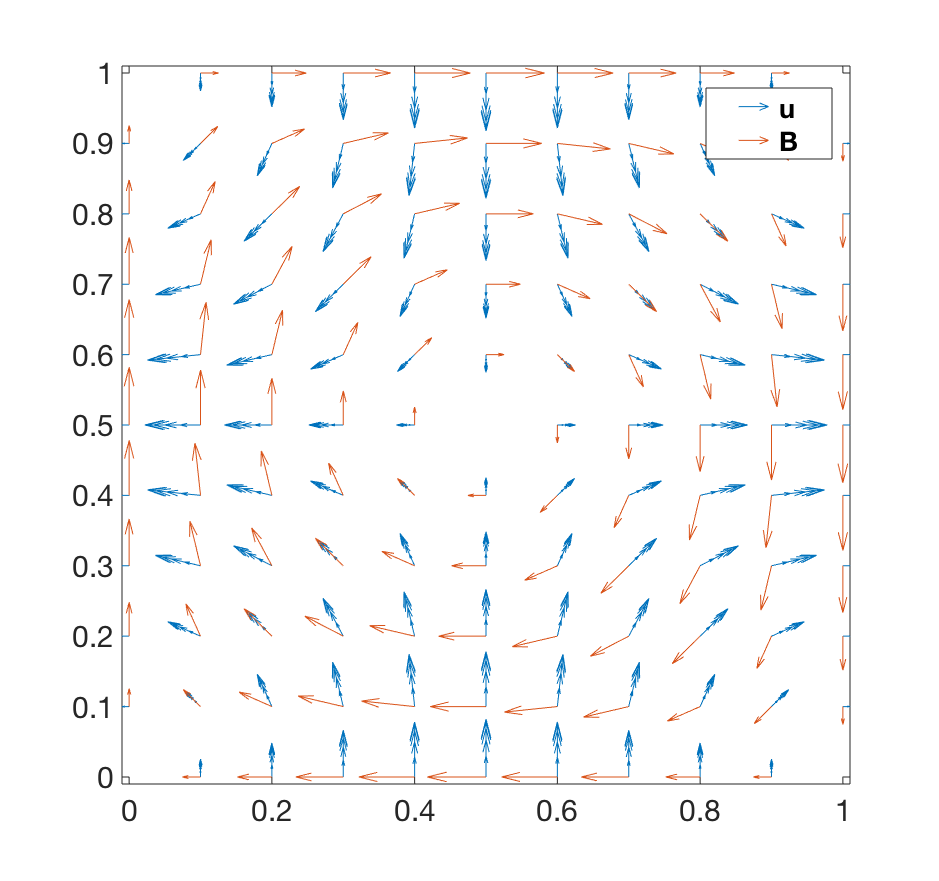}
\caption{Top view, or projection on the $xy$-plane of initial data $\bm{u}_{h}^{0}$ and $\bm{B}_{h}^{0}$}\label{init}  
\end{figure}

Furthermore, we have $\nabla \cdot \bm{B}^{0}_{h} = 0$ for the initial data and the helicity vanishes, i.e., $\mathcal{H}_m = \mathcal{H}_c = 0$.

To evaluate the magnetic helicity of our algorithm, we obtain $\bm{B}_{h}$ and compute the potential $\bm{A}_h$ by solving the following equation: find $\bm{A}_h \in H_0^h(\curl, \Omega)$ such that 
\begin{equation}\label{A-eqn}
(\nabla \times \bm{A}_h, \nabla \times \bm{C}_h) = (\bm{B}_h, \nabla \times \bm{C}_h), \quad \forall \bm{C}_h \in H_0^h(\curl, \Omega). 
\end{equation}
Since $\curl$ has a nontrivial kernel, \eqref{A-eqn} is a singular system. However, this non-uniqueness does not affect the helicity. In the implementation, we apply the Krylov space method, i.e., GMRES with ILU preconditioners, to solve \eqref{A-eqn}, which is known to converge for consistent singular problems \cite{ipsen1998idea,lee2008sharp}.

\begin{figure}[h]
\centering 
\includegraphics[width=6cm, height=6cm]{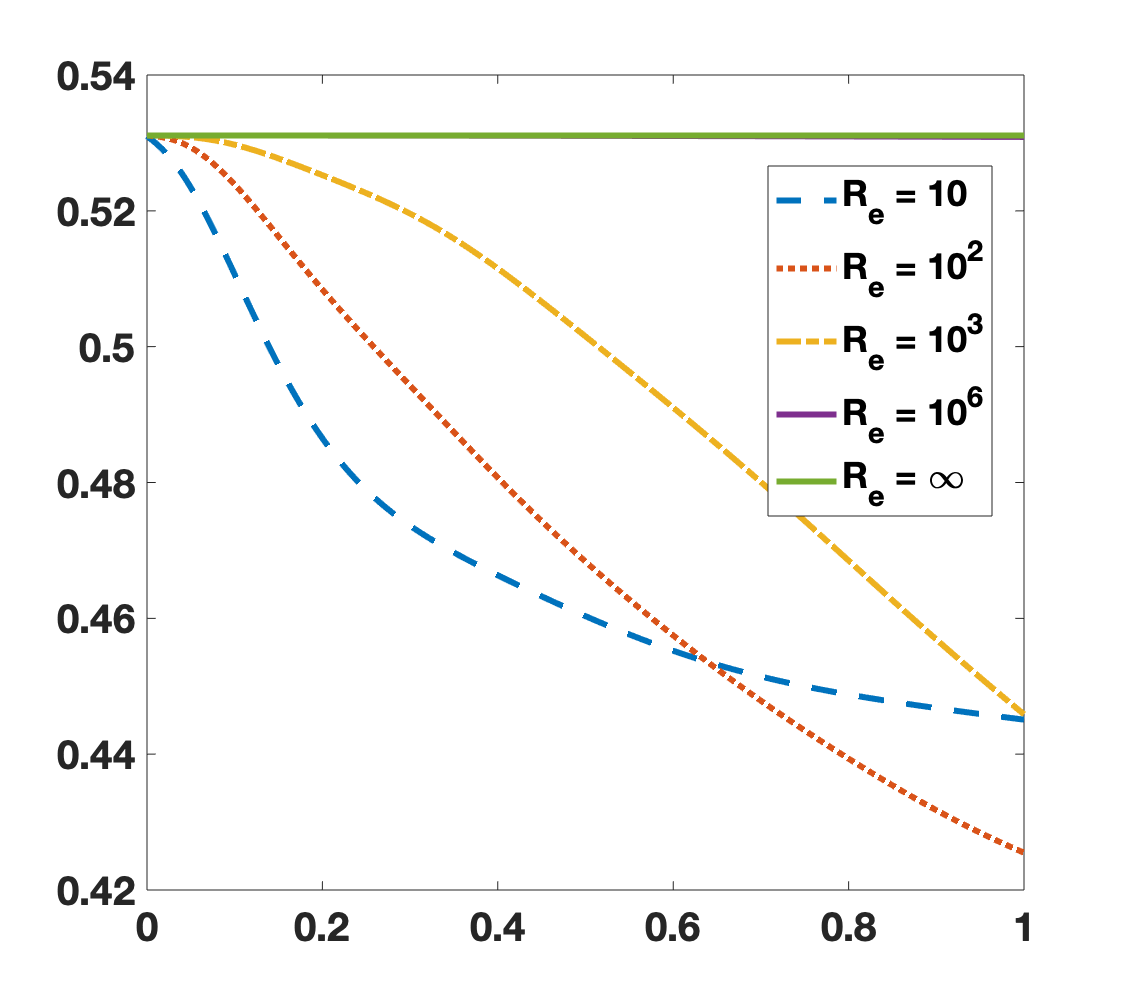}
\includegraphics[width=6cm, height=6cm]{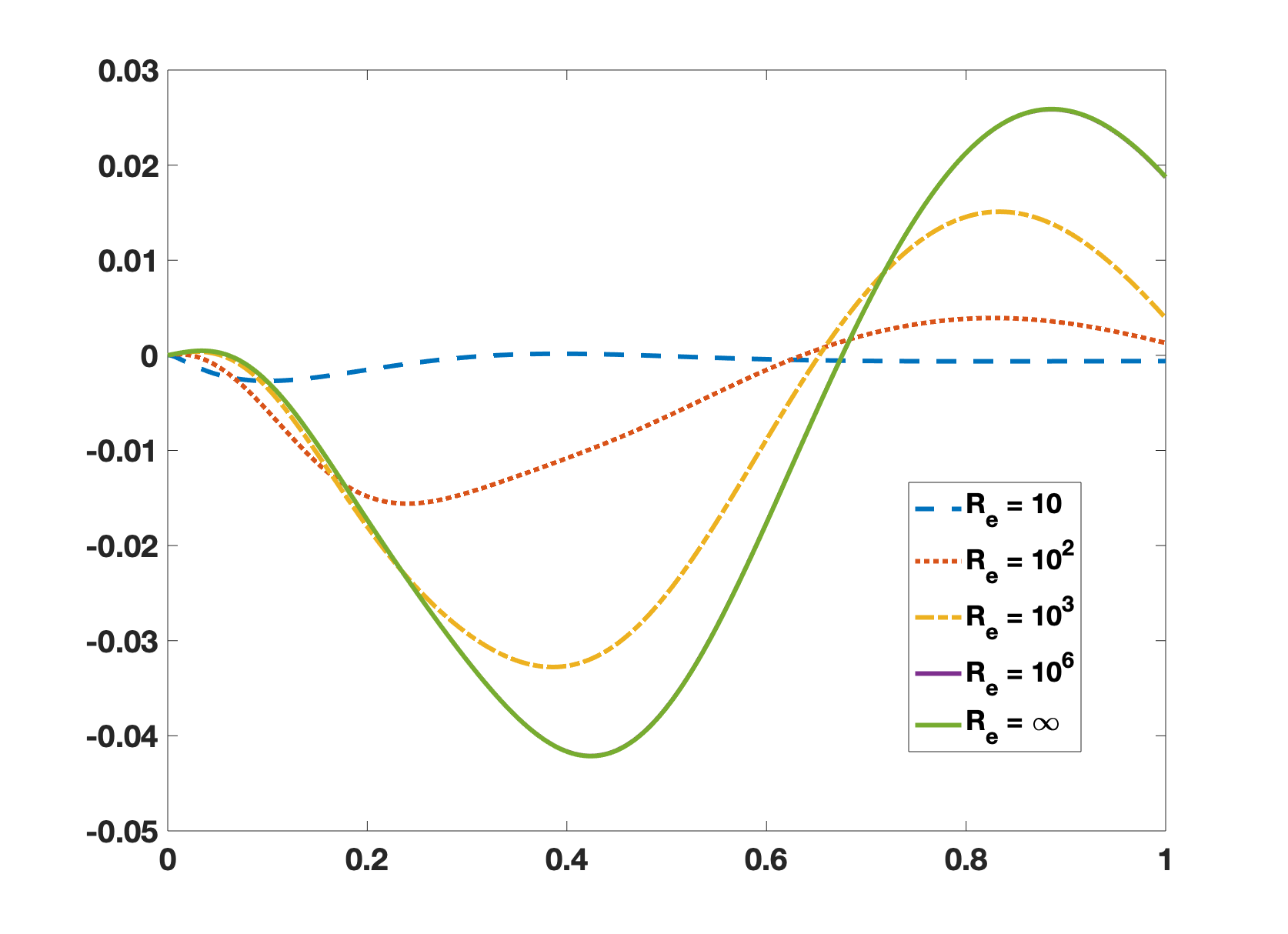}
\caption{(Left) Energy and (Right) $\int_\Omega \left( \nabla \cdot \mathbb{Q}_h^{\div} \bu_h \right )\, dx$ plot as a function of time with different Reynolds numbers. Note that $h = 1/8, \Delta t= 1/1000$ and $R_m = 10^7$.}\label{ediv} 
\end{figure}

\begin{figure}[h]
\centering 
\includegraphics[width=14cm, height=6cm]{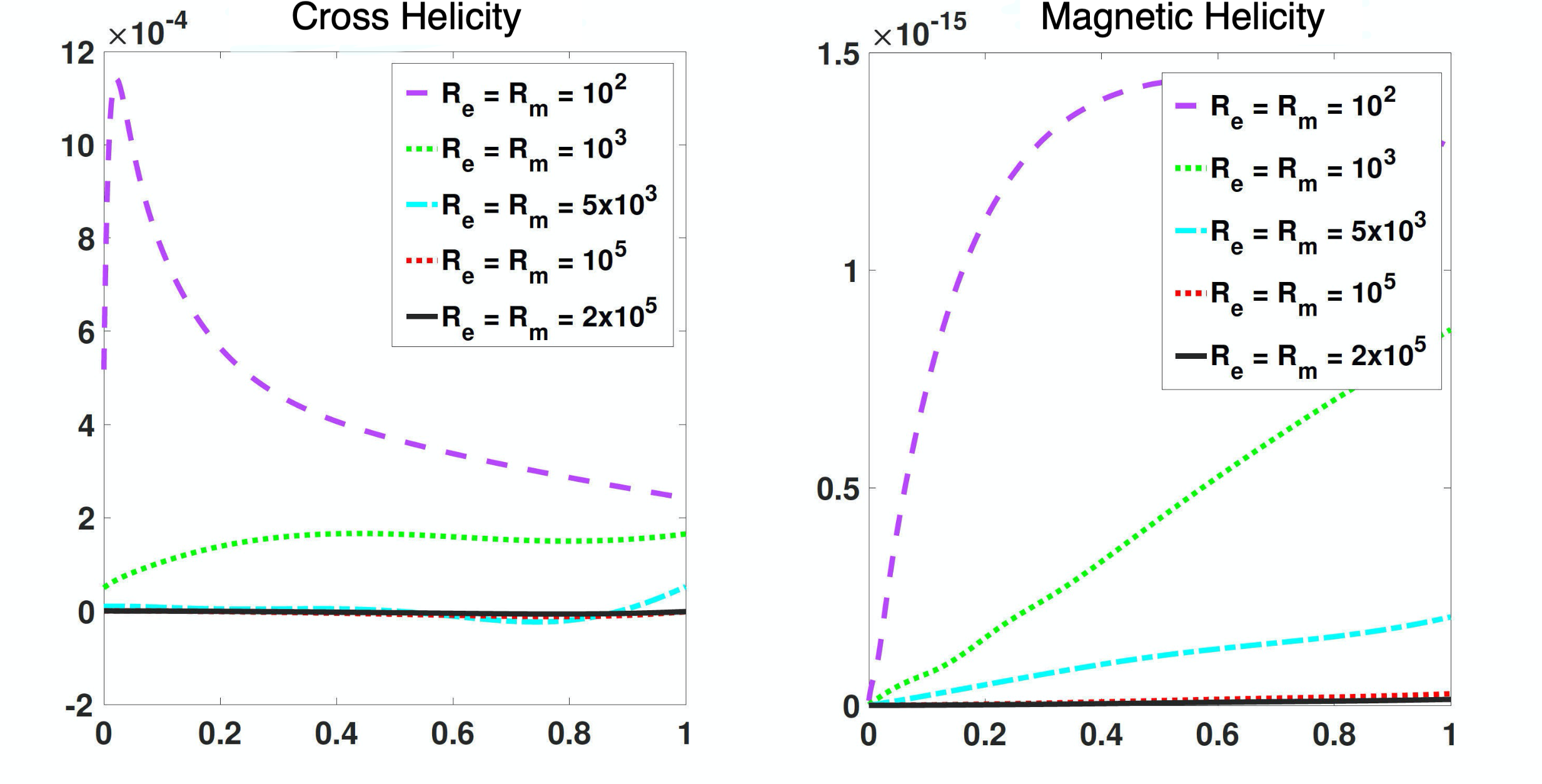}
\caption{Changes in the right hand side of the equation for $\mathcal{H}_c$ (left) \eqref{hceq} and $\mathcal{H}_m$ (right) \eqref{hmeq} obtained by Algorithm \ref{alg:main} with various choices of $R_{e}$ and $R_{m}$. Here $h = 1/16$ and $\Delta t = 1/1000$.} 
\label{fig:polution}
\end{figure}

\begin{figure}[h]
\centering 
\includegraphics[width=16cm, height=6cm]{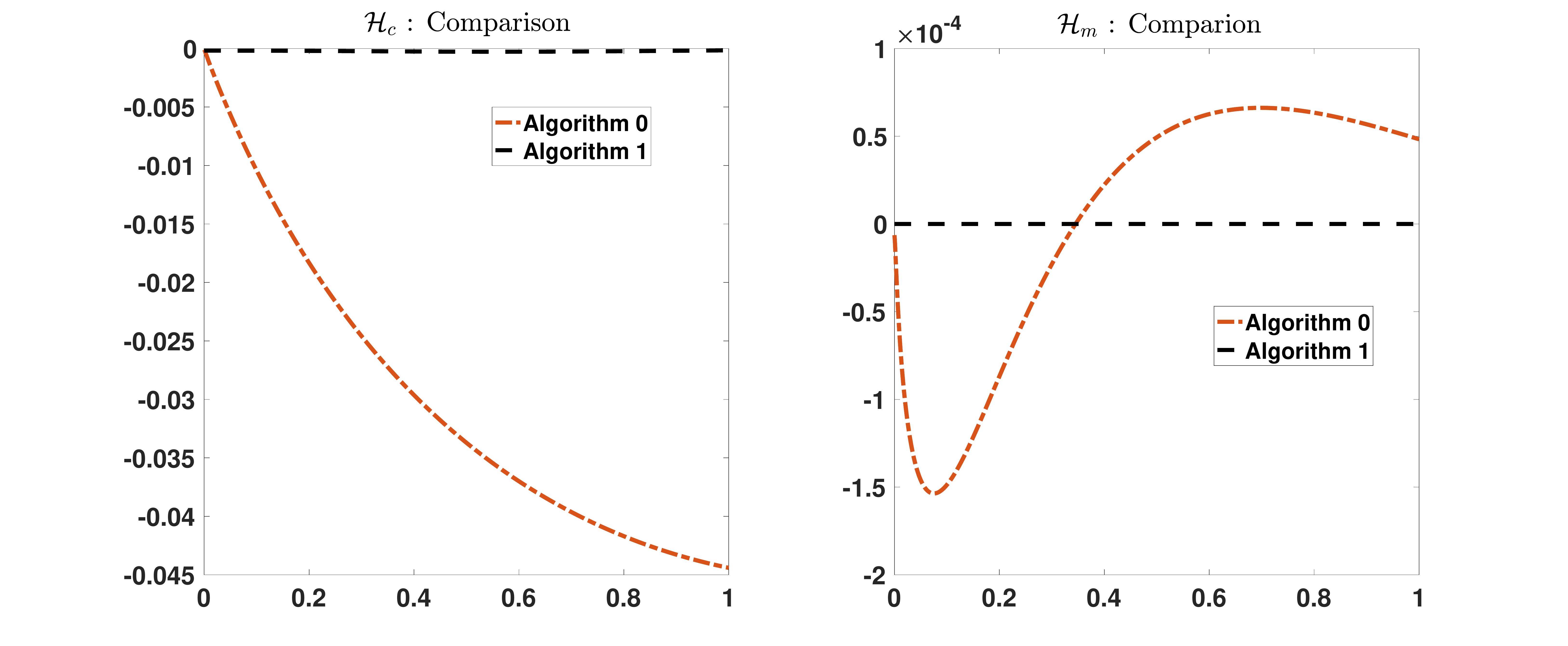}
\caption{$\mathcal{H}_c$ (left) and $\mathcal{H}_m$ (right) from Algorithm \ref{alg:0} and Algorithm \ref{alg:main}, respectively, with $R_e, R_m = 5000$, ${c} = 0.01$, $h = 1/16$ and $\Delta t = 1/1000$.} 
\label{fig:algorithm01}
\end{figure}

We now discuss the effect of resistivity  on the cross and magnetic helicity. Figure \ref{fig:polution} shows the evolution of helicity in Algorithm \ref{alg:main} with various choices of $R_{e}$ and $R_{m}$.  As $R_{e}$ and $R_{m}$ increase, the helicity is closer to be conserved. This is consistent with Theorem \ref{thm:conservation} stating that both the magnetic and the cross helicity are conserved in the ideal MHD limit.

To compare the helicity from Algorithm \ref{alg:main} and other algorithms, we consider another finite element algorithm based on the scheme proposed in \cite{schotzau2004mixed} for solving the stationary incompressible MHD system. 

The finite element scheme presented in \cite{schotzau2004mixed} has $\bm{B}_{h}$ in the N\'ed\'elec space with any stable Stokes pair for $\bm{u}_{h}$ and $P_{h}$. To show the effect of the discretization for the magnetic part and adapt the scheme to the vorticity boundary conditions \eqref{NS-bc},  we shall only use the scheme for the magnetic part of the algorithm and consider the time-dependent setting.
\begin{customthm}{0}\label{alg:0}
Find $(\bm{u}_h,\bm{\omega}_h,\bm{B}_h, P_h) \in H^h_0({\rm curl}, \Omega) \times H^h_0({\rm curl}, \Omega) \times H^h_0({\rm curl}, \Omega) \times H^h_0({\rm grad}, \Omega)$ such that for all $(\bm{v}_h,\bm{\mu}_h,\bm{C}_h, Q_h) \in H^h_0({\rm curl}, \Omega) \times H^h_0({\rm curl}, \Omega) \times H^h_0({\rm curl}, \Omega) \times H^h_0({\rm grad}, \Omega)$,
\begin{subeqnarray}\label{variation:schotzau} \nonumber
\left( {D_t} \bm{u}_{h}, \bm{v}_{h} \right) - (\bm{u}_{h} \times \bm{\omega}_{h}, \bm{v}_{h}) + R_e^{-1}(\nabla\times \bm{u}_{h}, \nabla\times \bm{v}_{h})  && \\ 
+ (\nabla P_{h}, \bm{v}_{h})  - {c} 
( (\nabla \times \bm{B}{_h}) \times \bm{B}_{h},\bm{v}_{h} ) &=& 0, \qquad  \\
 (\bm{\omega}_{h}, \bm{\mu}_{h}) - (\nabla\times\bm{u}_{h},  \bm{\mu}_{h}) &=& 0, \\ \slabel{sch-2}
\left( {D_t} \bm{B}_{h}, \bm{C}_{h} \right)  -\left ( \bm{u}_h \times \bm{B}_h, \nabla\times\bm{C}_h \right )+R_{m}^{-1}\left ( \nabla\times \bm{B}_h, \nabla\times \bm{C}_h \right ) &=& 0,\\ 
(\bm{u}_{h} ,\nabla Q_{h}) &=& 0.
\end{subeqnarray} 
\end{customthm}
In Algorithm \ref{alg:0}, we use the Crank-Nicolson time stepping as Algorithm \ref{alg:main}. In \cite{schotzau2004mixed} there is a Lagrange multiplier to impose the weak divergence-free condition for the magnetic field, i.e.,
\begin{equation}\label{weak-div}
(\bm{B}_h, \nabla z_h)=0, \quad\forall z_h \in H_0^{h}(\grad, \Omega).
\end{equation}
However, we may drop this constraint in the above time dependent formulation because we conclude $\left( {D_t} \bm{B}_{h}, \nabla z_{h} \right)=0, ~\forall z_{h}\in H_{0}^{h}(\grad, \Omega)$ by taking $\bm{C_{h}}=\nabla z_{h}$ in \eqref{sch-2}, i.e., if the initial data satisfies \eqref{weak-div}, then the solution satisfies \eqref{weak-div} at any time step.


Figure \ref{fig:algorithm01} compares the cross and magnetic helicity produced in Algorithm \ref{alg:0} and Algorithm \ref{alg:main}, respectively. In fact, for Algorithm \ref{alg:0}, we do not even have a precise definition of the magnetic helicity since the discrete magnetic field is not divergence-free. The curve in Figure \ref{fig:algorithm01}  for $\mathcal{H}_{m}$ demonstrates a discrete helicity computed by projecting the magnetic field to the divergence-free Raviart-Thomas space.

Figure \ref{fig:algorithm01} shows that even for the resistive MHD systems, our asymptotic-helicity-conservative scheme Algorithm \ref{alg:main} shows a significant difference in the helicity behavior compared to Algorithm \ref{alg:0}  which is not designed with an emphasis on the helicity-preservation.  

{
Figure \ref{fig:vel-2} shows the helicity from computation with various fluid Reynolds numbers and a large magnetic Reynolds number. The cross helicity varies in the time evolution, while the magnetic helicity nearly remains constant. This agrees with the fact that the evolution of the cross helicity depends on $R_{e}$ and $R_{m}$, while the magnetic helicity only depends on $R_{m}$ (see Lemma \ref{lem:continuous-helicity}). Figure \ref{fig:vel-2} also shows the helicity in the ideal MHD system (formally $R_{e}=R_{m}=\infty$), where both the cross helicity and the magnetic helicity are conserved. 
}

{We also plot the snapshot of the velocity and magnetic fields in the time evolution in Figure \ref{fig:vel}  to verify the stability of our computation.} 

\begin{figure}[h]
\centering 
\includegraphics[width=4.1cm, height=5cm]{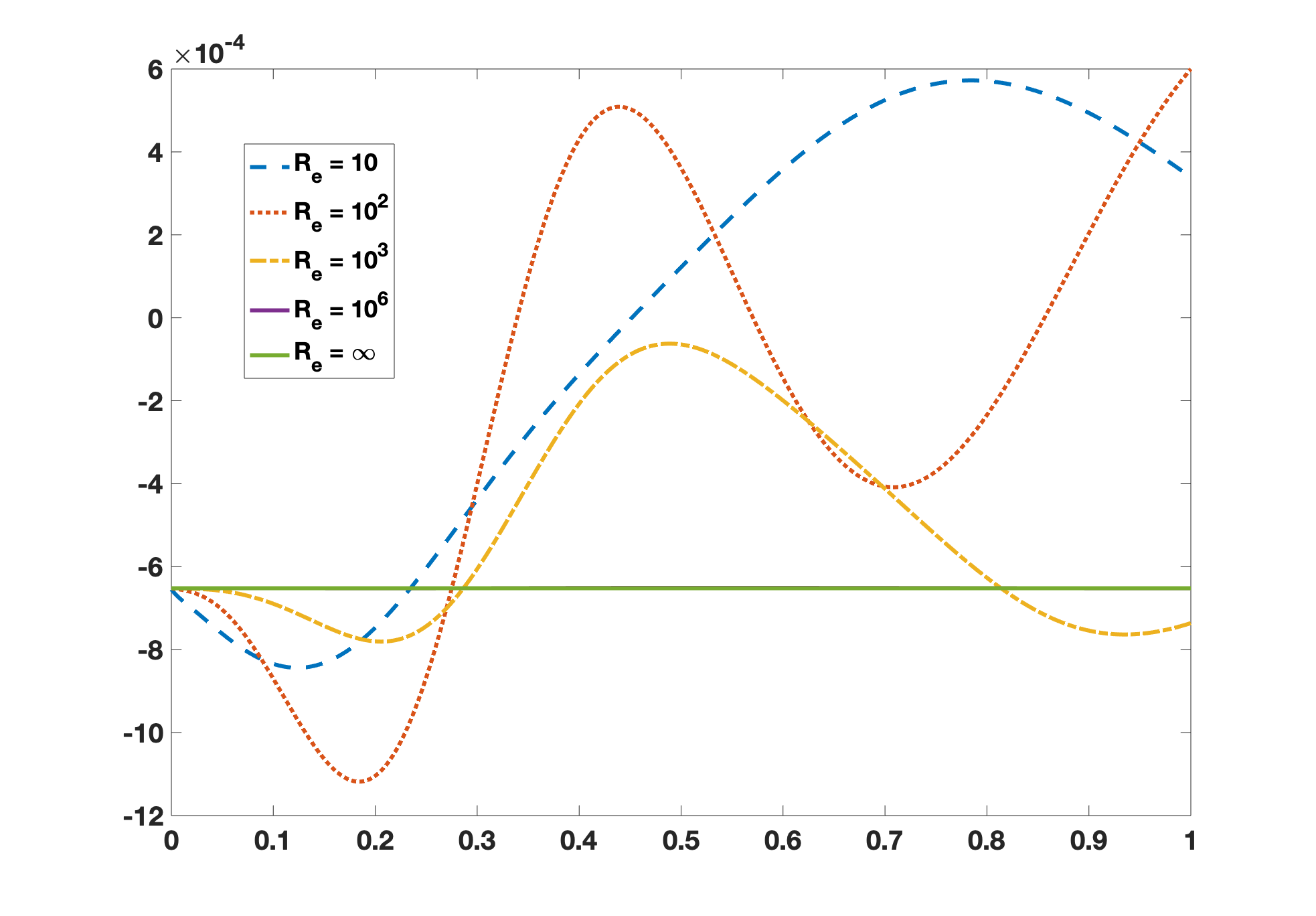} 
\includegraphics[width=4.1cm, height=5cm]{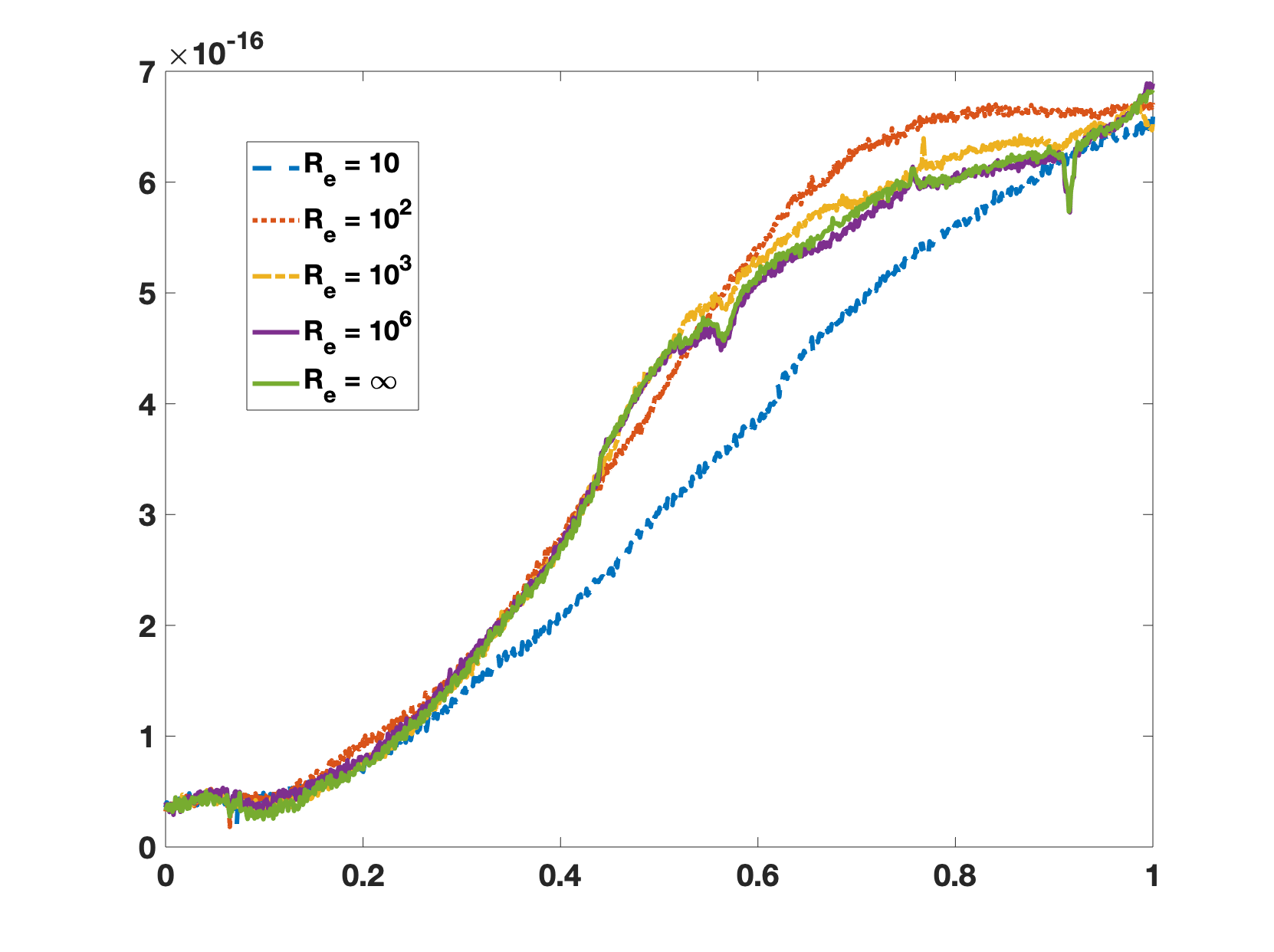} 
\includegraphics[width=4.1cm, height=5cm]{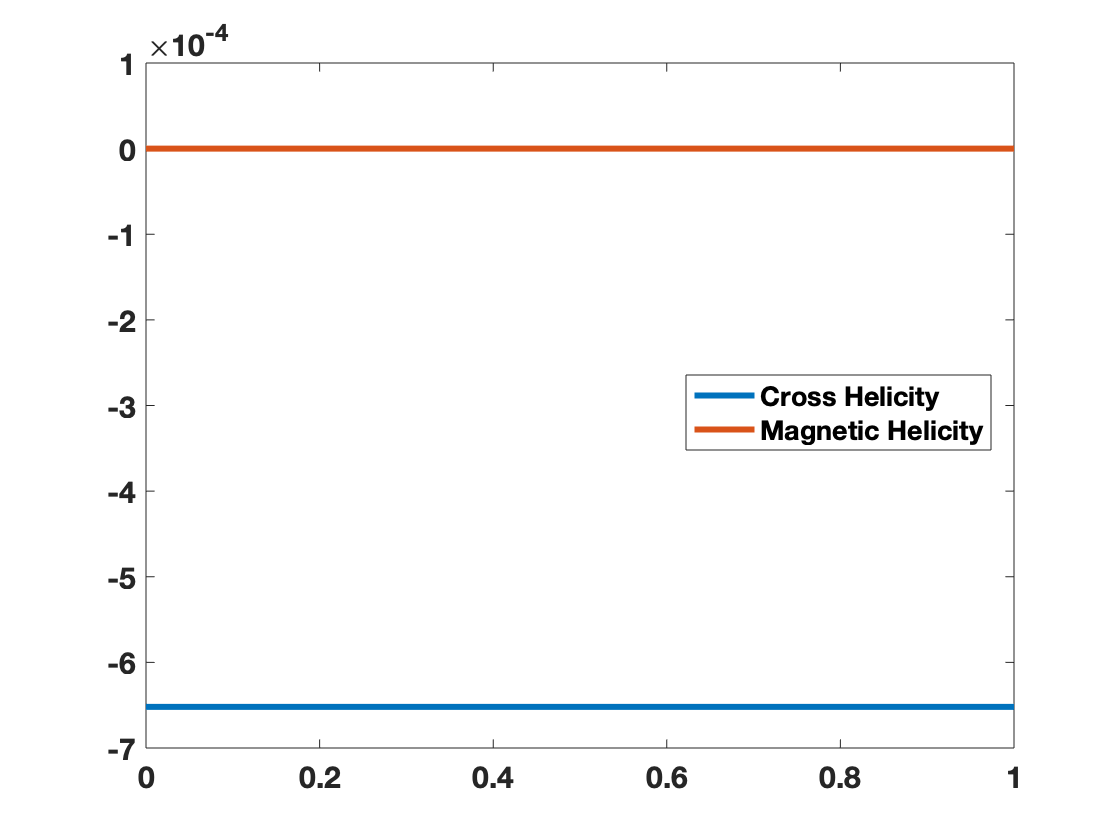} 
\caption{{Plot of the cross helicity (left) and the magnetic helicity (middle) with $R_m = 10^7$  and various finite values of  $R_e$, and (right) Plot of cross and magnetic helicity for the ideal case ($R_m = R_e = \infty$) obtained from Algorithm \ref{alg:main} with ${c} = 1, h = 1/8$ and $\Delta t = 1/1000$. }} 
\label{fig:vel-2}
\end{figure}

\begin{figure}[h]
\centering 
\includegraphics[width=3.2cm, height=3.2cm]{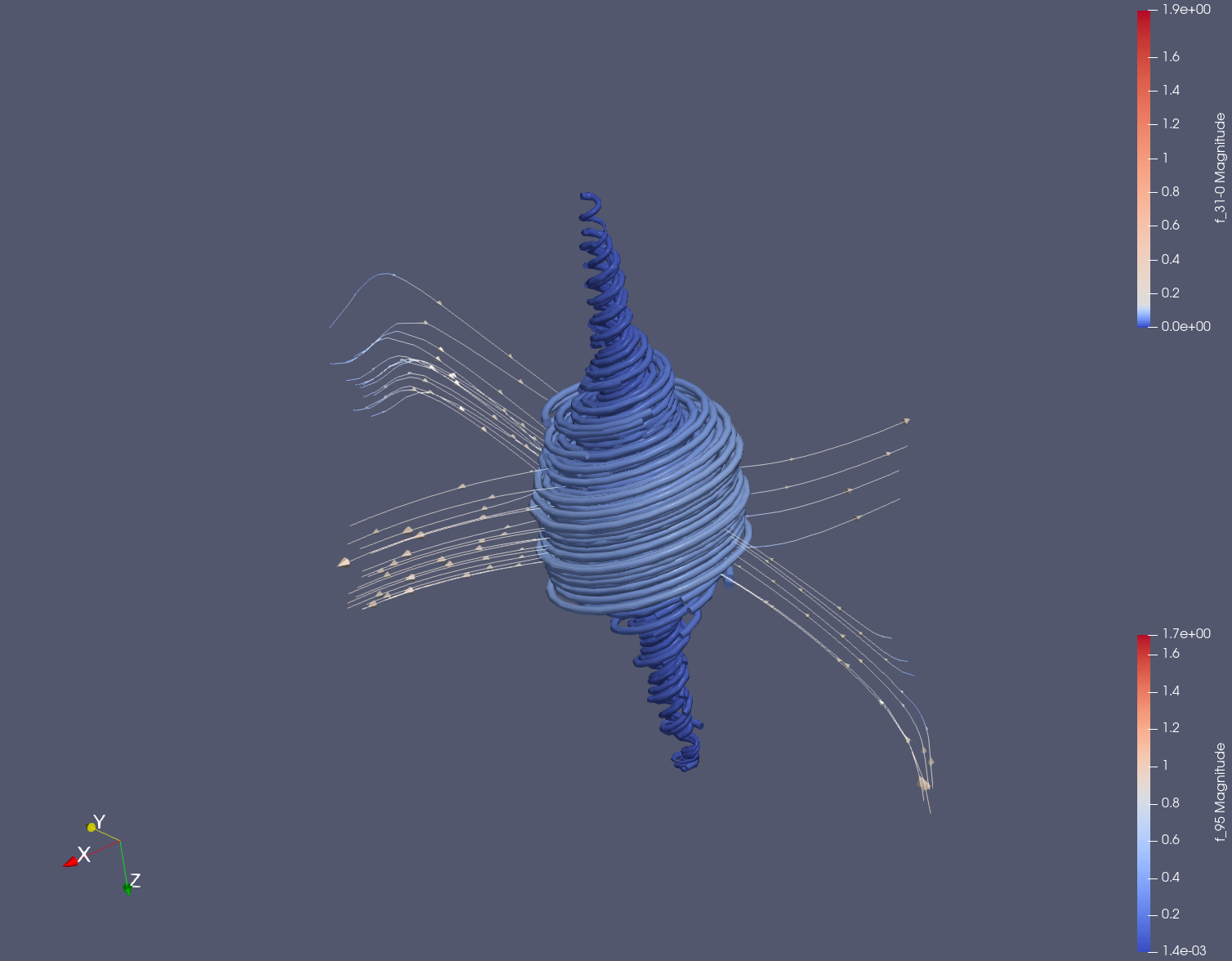} 
\includegraphics[width=3.2cm, height=3.2cm]{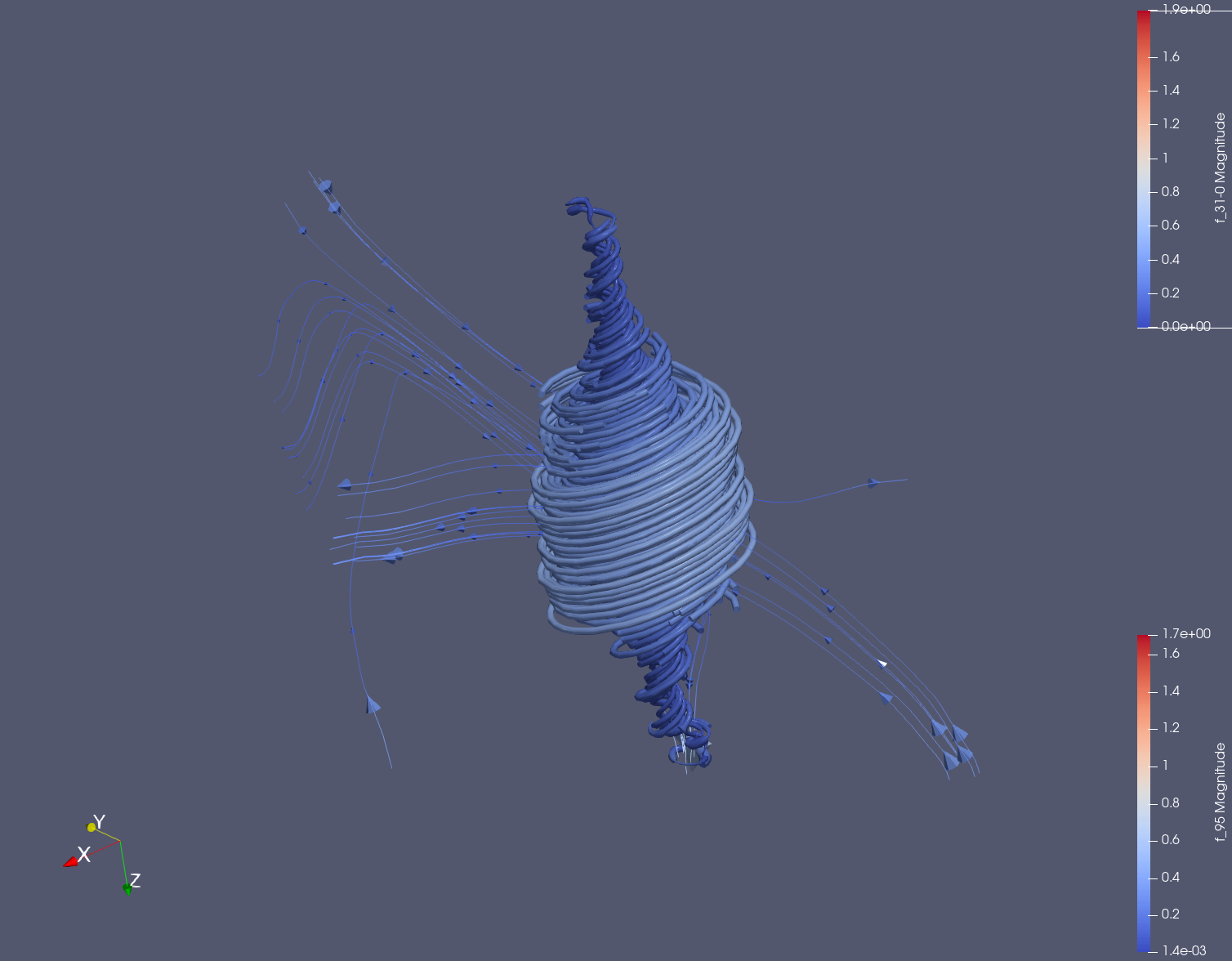} 
\includegraphics[width=3.2cm, height=3.2cm]{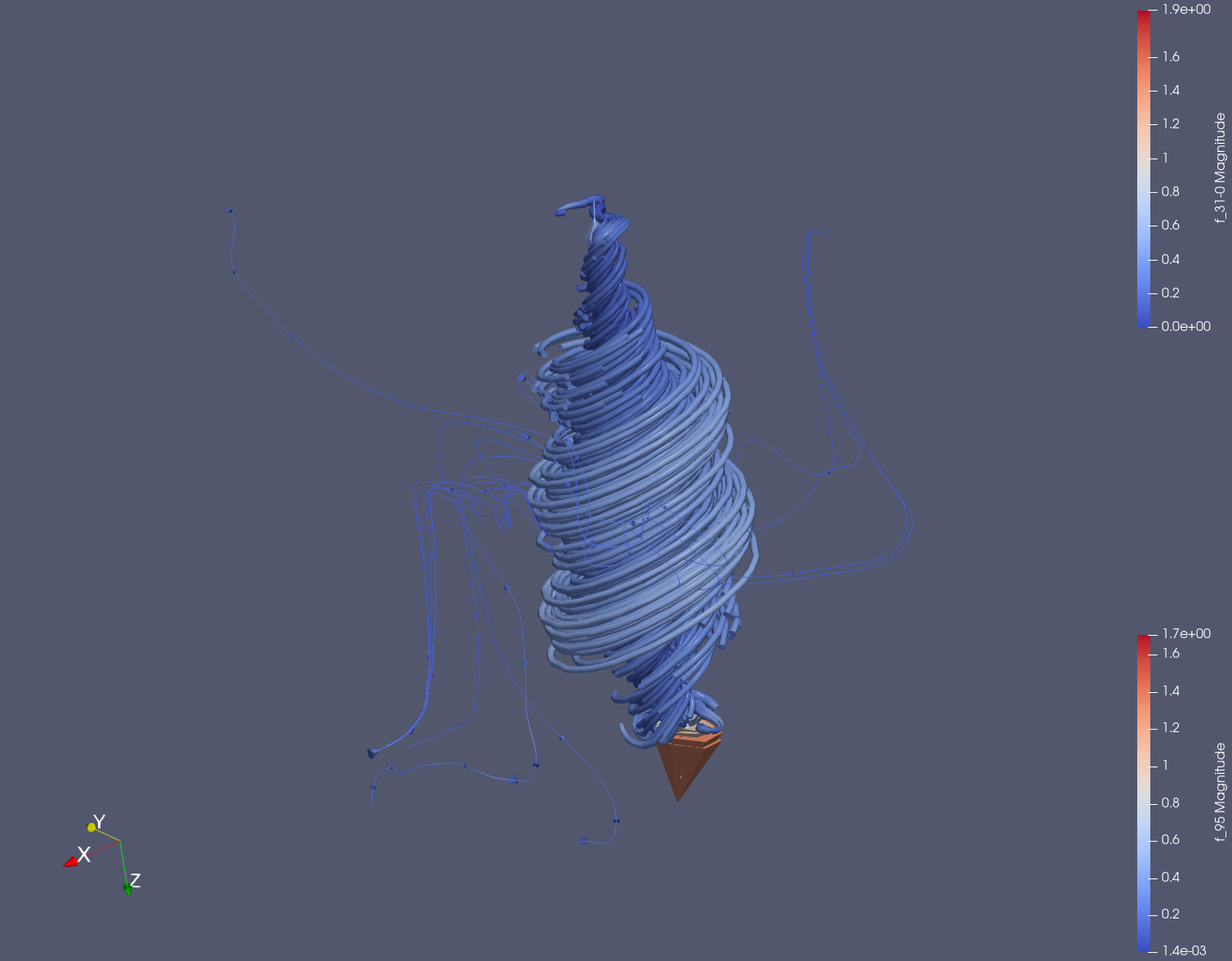}
\includegraphics[width=3.2cm, height=3.2cm]{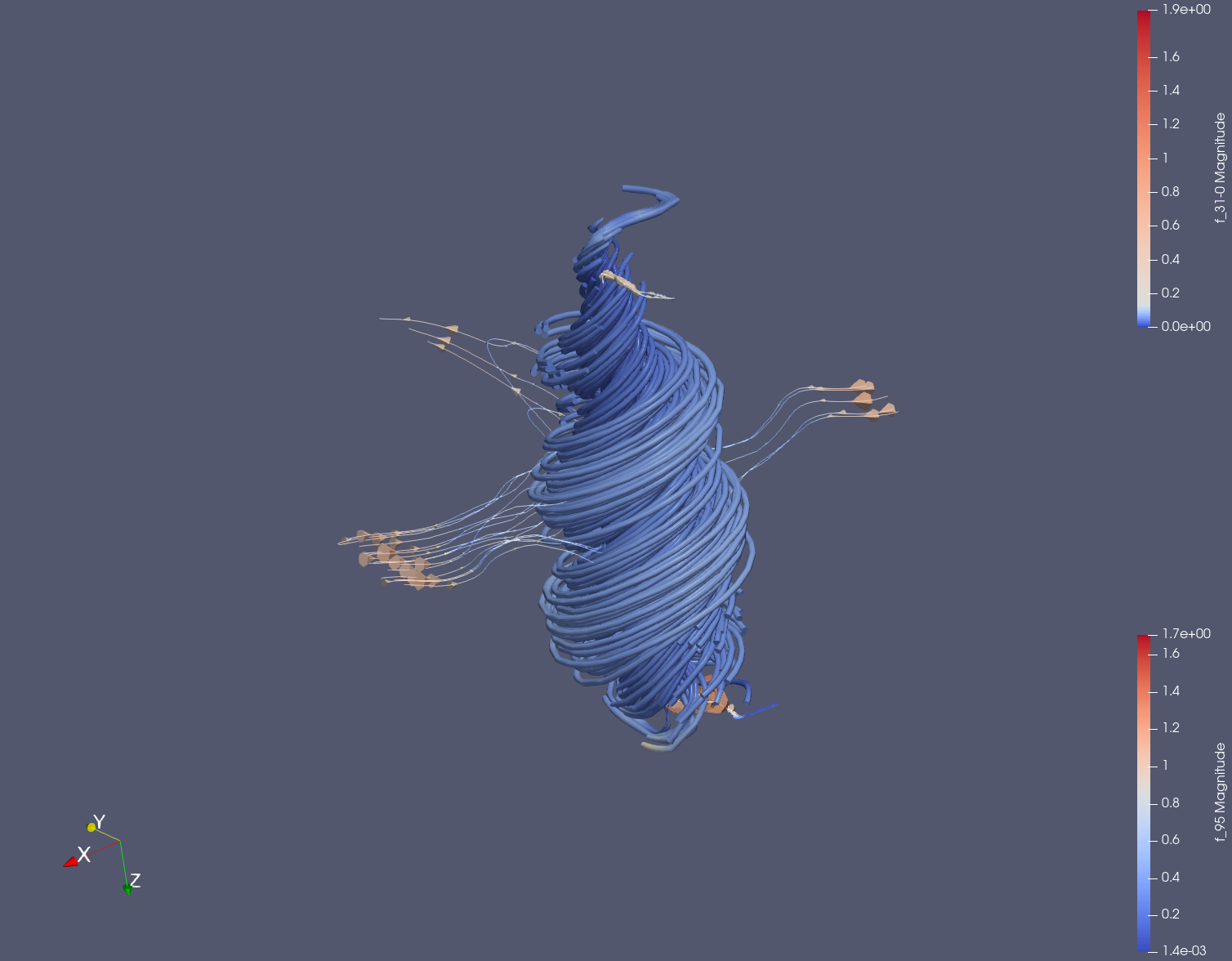} 
\includegraphics[width=3.2cm, height=3.2cm]{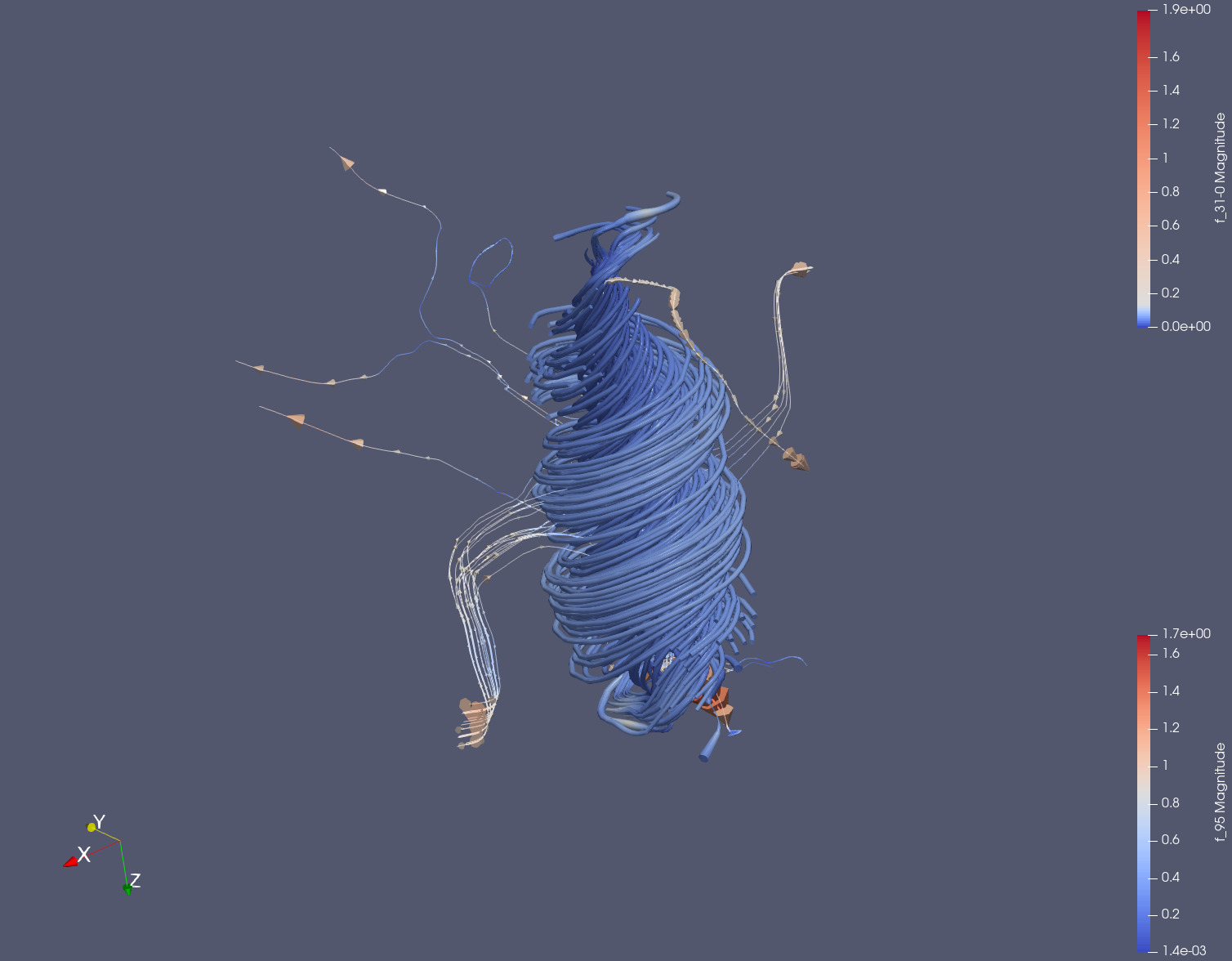}
\includegraphics[width=3.2cm, height=3.2cm]{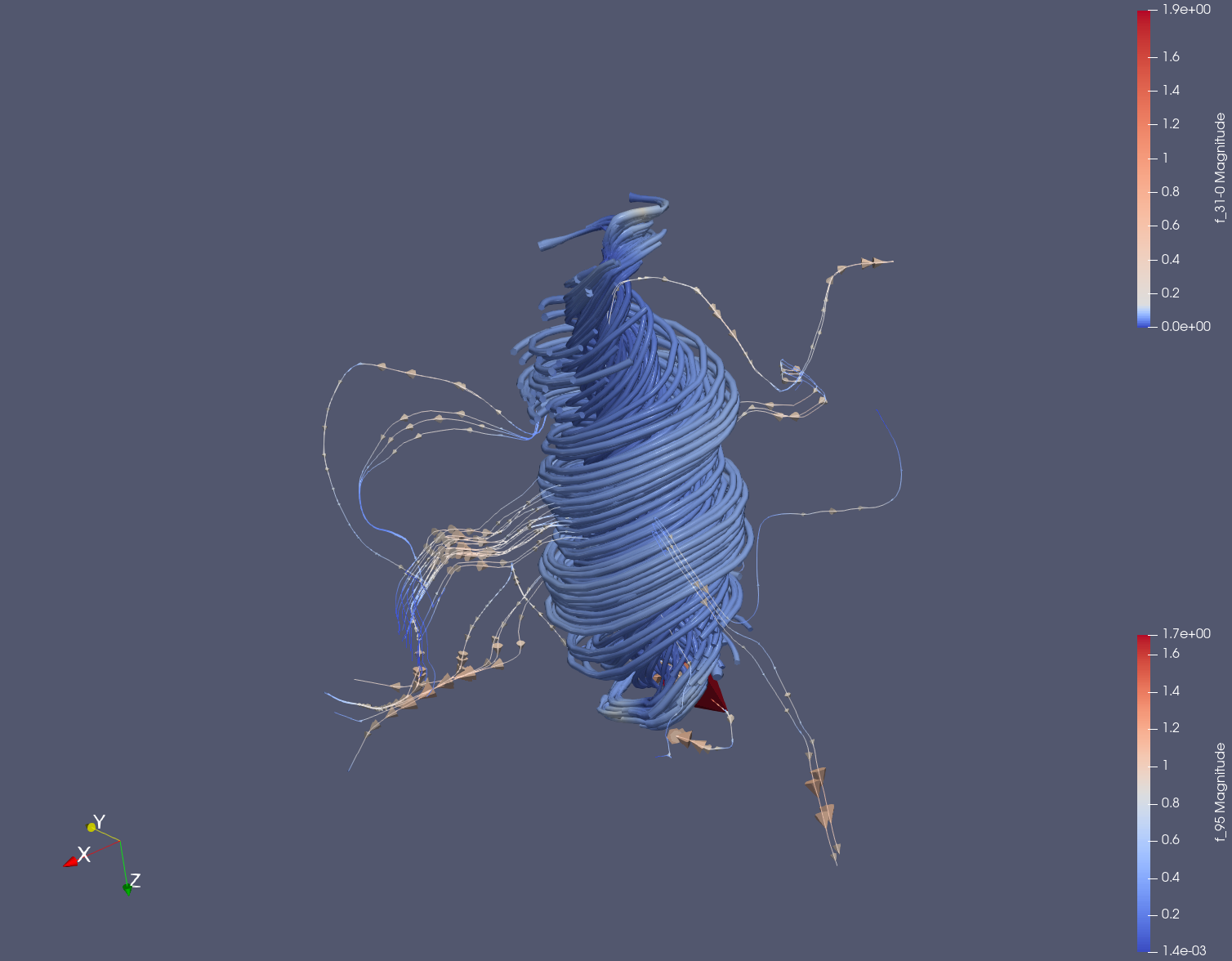} 
\includegraphics[width=3.2cm, height=3.2cm]{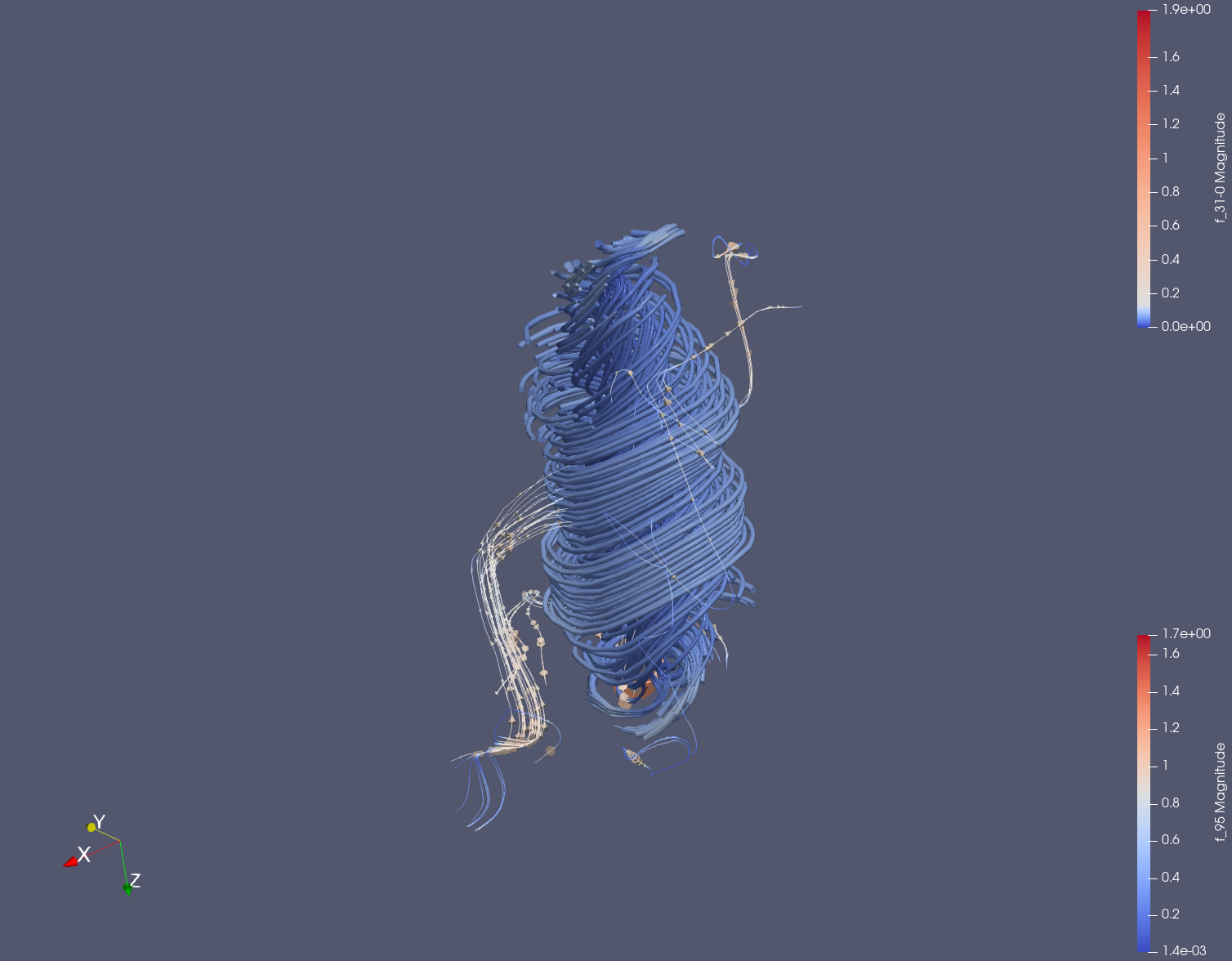}
\includegraphics[width=3.2cm, height=3.2cm]{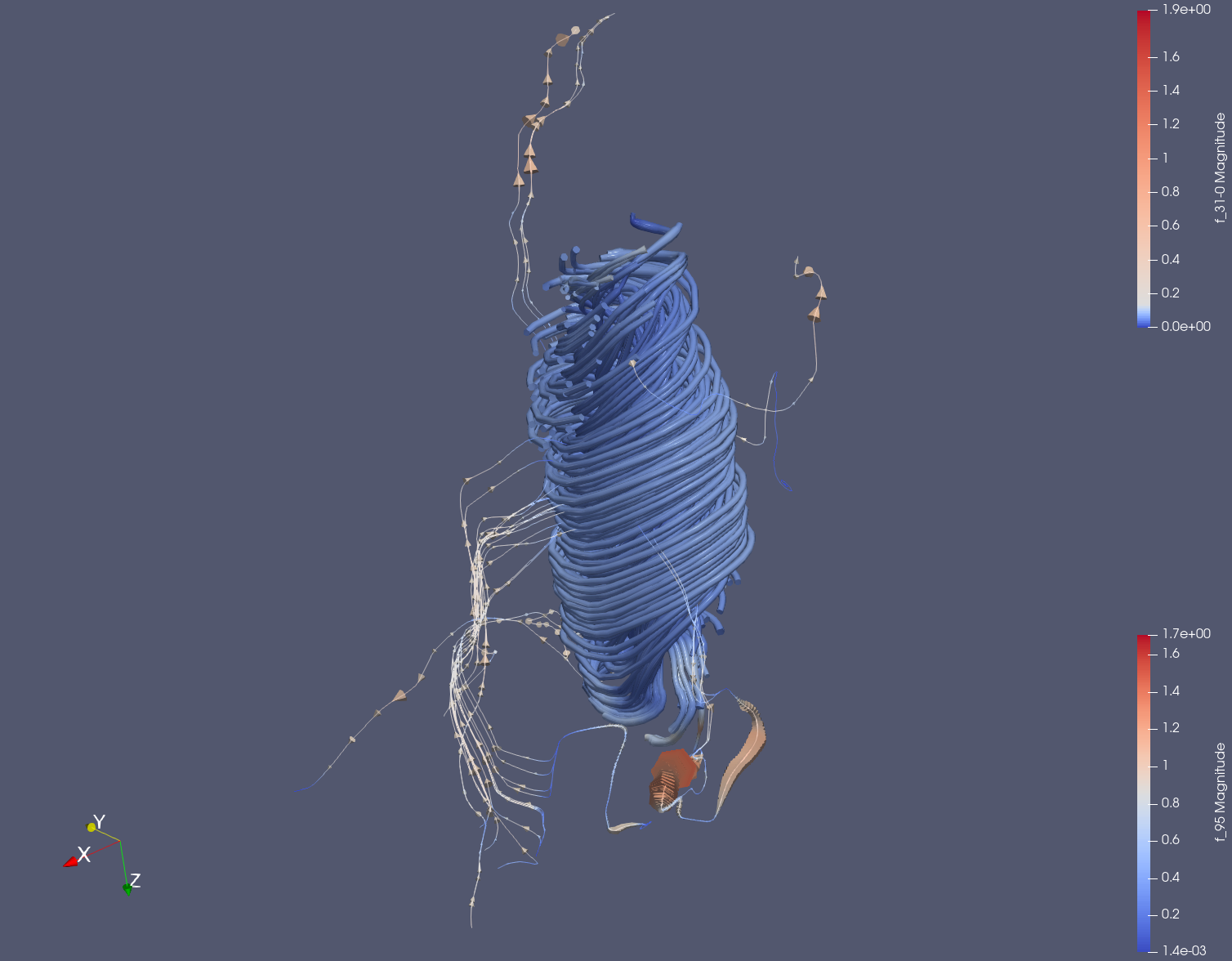} 
\includegraphics[width=3.2cm, height=3.2cm]{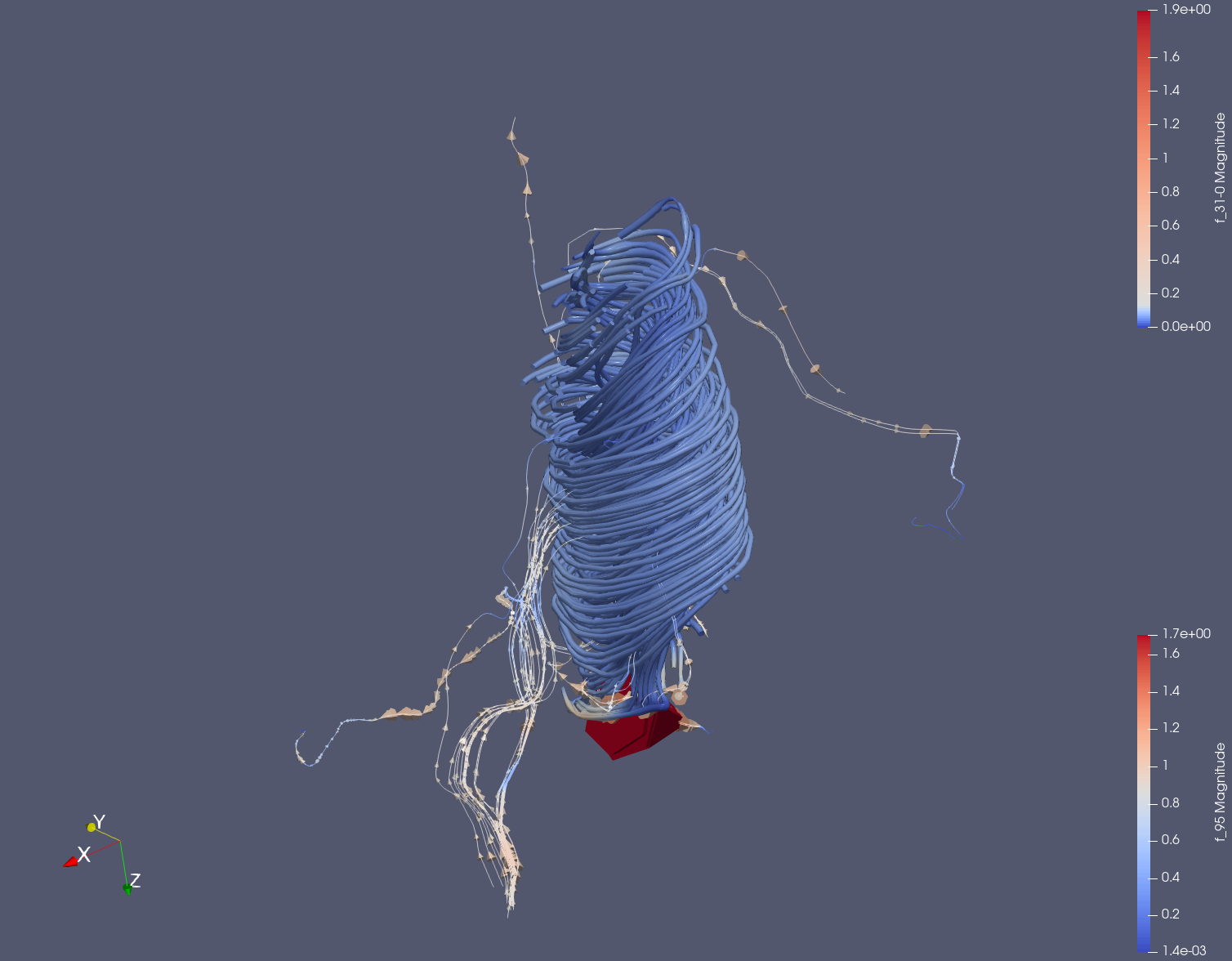}
\includegraphics[width=3.2cm, height=3.2cm]{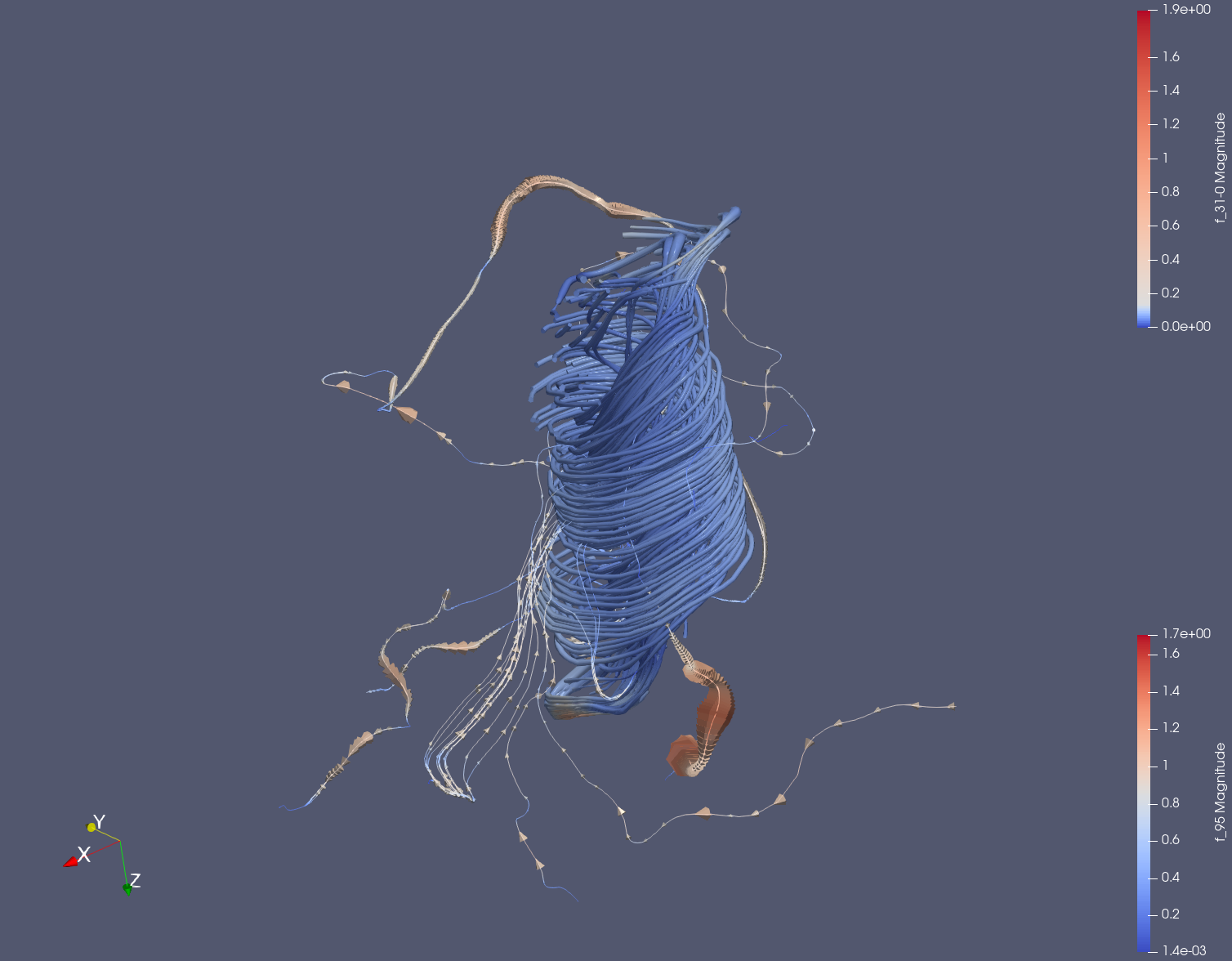} 
\includegraphics[width=3.2cm, height=3.2cm]{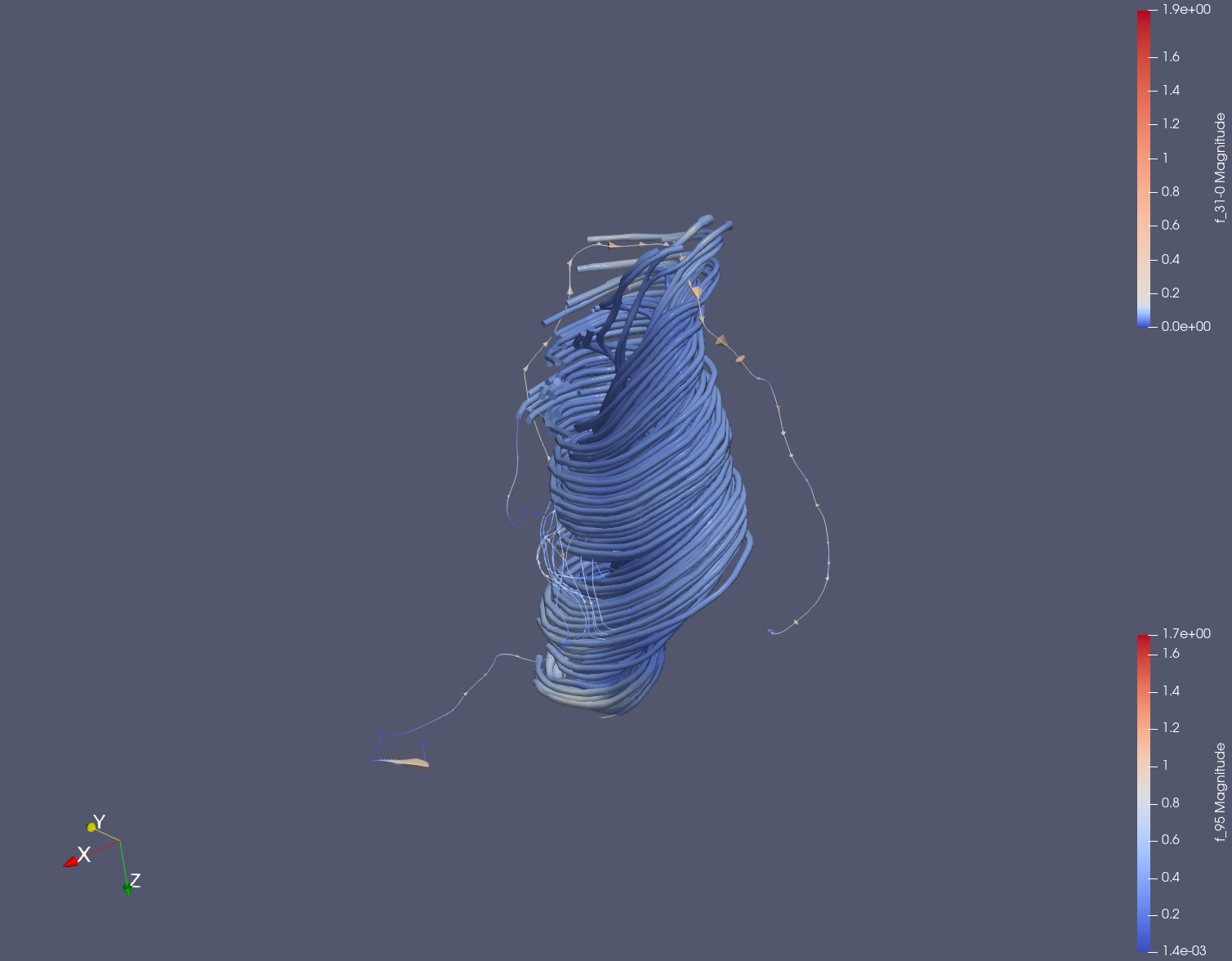}
\includegraphics[width=3.2cm, height=3.2cm]{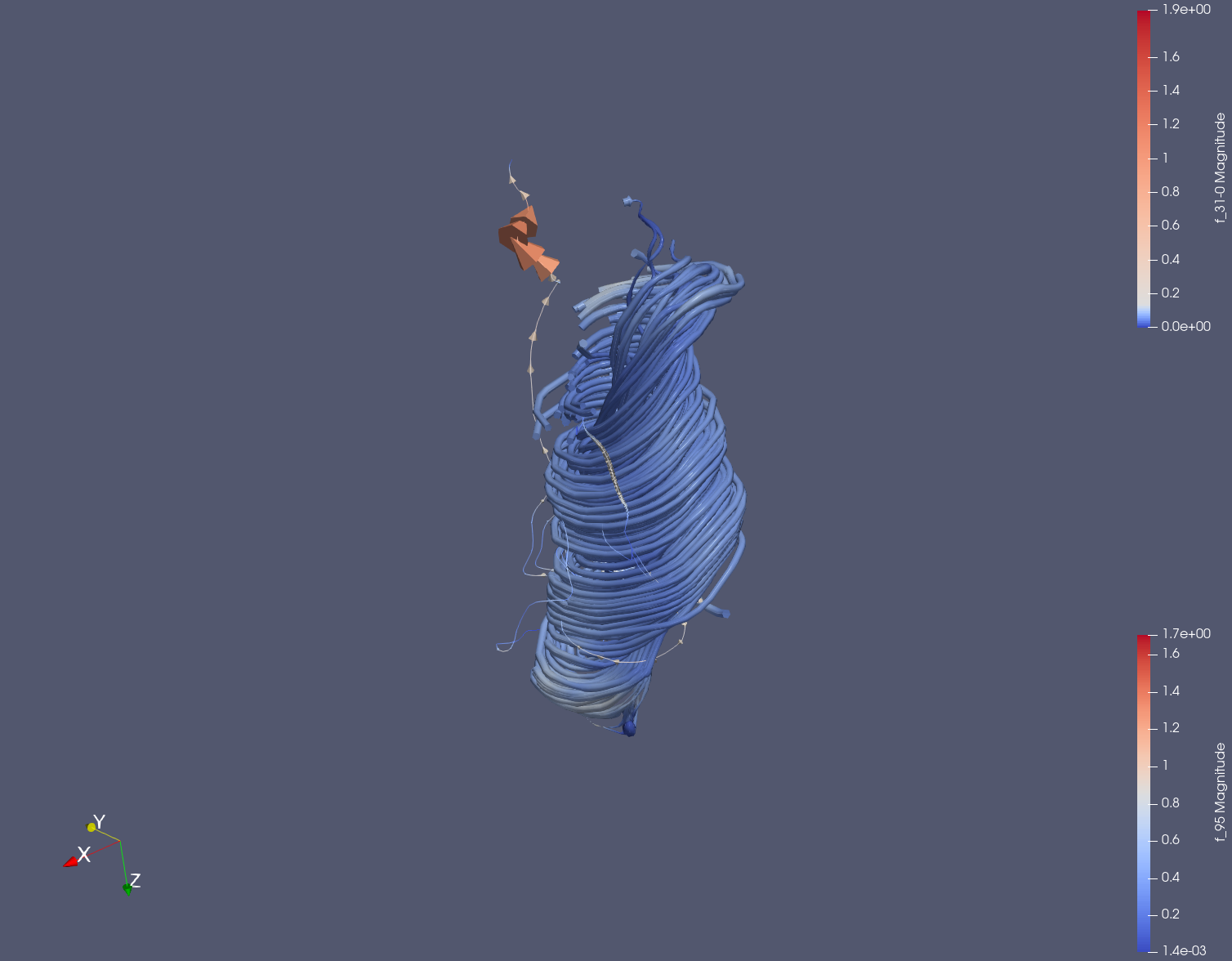}
\caption{Plot of stream tube of the magnetic field $\bm{B}_{h}$ obtained with Algorithm \ref{alg:main} with $R_e = \infty, R_m = \infty$, $h = 1/16$ and $\Delta t = 1/1000$. The lines with arrows show $\bm{u}_h$. The plot shows a sequence of magnetic tubes as time goes from top left to going right and then at the next row from left to right, each of which corresponds to time level, $T = 0.05, 0.1, 0.2, 0.3, 0.4, 0.5, 0.6, 0.7, 0.8, 0.9, 1.0$ and $1.1$.}
\label{fig:vel}
\end{figure}

\section{Conclusion}\label{sec:con}

We constructed finite element methods that preserve the discrete energy law, the magnetic Gauss law and the magnetic, cross helicity precisely at the discrete level. The construction relies on discrete de Rham complexes and mathematical properties of the MHD system.  In particular, the Lorentz force term and the magnetic advection term cancel with each other in the proof of the energy law, and the fluid and magnetic advection terms cancel with each other in the proof of the cross helicity conservation.  These cancelations reflect symmetry in the operator structures of the MHD system \cite{ma2016robust}, carried over from the continuous level to the discrete level, and can be important for the construction of efficient solvers \cite{ma2016robust}. 

For the spatial discretization, we used finite element differential forms, e.g., the N\'{e}d\'elec and Raviart-Thomas elements, in the numerical tests. The discussions in this paper also hold with other discrete de Rham complex, e.g., spline spaces \cite{buffa2011isogeometric}. 

To preserve the helicity and energy in the full discretization, we used the Crank-Nicolson scheme as the temporal discretization, c.f., \cite{rebholz2007energy}.  One can choose other temporal schemes that preserve quadratic invariants, c.f., \cite{hairer2006geometric}.

\section*{Acknowledgement}

Young-Ju Lee is supported in part by American Chemical Society PRF\# 57552-ND9.

The authors wish to thank Yang He, Ralf Hiptmair and Cecilia Pagliantini for helpful discussions.

 \appendix

 \section{Existence and uniqueness of solutions}



In this appendix, we discuss the existence and uniqueness of solutions to the nonlinear scheme. For technical issues and for simplicity, we will actually work on a slightly modified system other than \eqref{main:eq3d}. We first introduce some notation.

Define
$$
\bm{Z}_{h}:=\{\bm{v}\in H_{0}^{h}(\curl, \Omega) : \nabla_{h}\cdot \bm{v}_{h}=0\}, \quad H^{h}_{0}(\div0, \Omega):=\{\bm{C}_{h}\in H_{0}^{h}(\div, \Omega) : \nabla\cdot\bm{C}_{h}=0\}.
$$
\begin{lemma}\label{lem:divh0}
For $\bm{B}_{h}\in H_{0}^{h}(\div0, \Omega)$, we have $\nabla_{h}\cdot (\mathbb{Q}_{h}^{\curl}\bm{B}_{h})=0$.
\end{lemma}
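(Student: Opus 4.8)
The plan is to unwind the two discrete operators involved and use the defining adjoint relations together with the exactness of the discrete de~Rham complex. Recall $\nabla_{h}\cdot : H^{h}_{0}(\curl,\Omega)\to H^{h}_{0}(\grad,\Omega)$ is defined by $(\nabla_{h}\cdot \bm{v}_{h},\phi_{h}) = -(\bm{v}_{h},\nabla\phi_{h})$ for all $\phi_{h}\in H^{h}_{0}(\grad,\Omega)$, and $\mathbb{Q}_{h}^{\curl}$ is the $L^{2}$ projection onto $H^{h}_{0}(\curl,\Omega)$. So for any $\phi_{h}\in H^{h}_{0}(\grad,\Omega)$ I would compute
\begin{equation*}
(\nabla_{h}\cdot(\mathbb{Q}_{h}^{\curl}\bm{B}_{h}),\phi_{h}) = -(\mathbb{Q}_{h}^{\curl}\bm{B}_{h},\nabla\phi_{h}) = -(\bm{B}_{h},\mathbb{Q}_{h}^{\curl}\nabla\phi_{h}) = -(\bm{B}_{h},\nabla\phi_{h}),
\end{equation*}
where the middle step uses the self-adjointness property \eqref{move-P} of the $L^{2}$ projection and the last step uses $\nabla\phi_{h}\in H^{h}_{0}(\curl,\Omega)$ (since the discrete complex is a subcomplex of the continuous one, $\grad$ maps $H^{h}_{0}(\grad,\Omega)$ into $H^{h}_{0}(\curl,\Omega)$), so $\mathbb{Q}_{h}^{\curl}$ fixes it.

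It then remains to show $(\bm{B}_{h},\nabla\phi_{h}) = 0$ for all $\phi_{h}\in H^{h}_{0}(\grad,\Omega)$. Since $\bm{B}_{h}\in H_{0}^{h}(\div0,\Omega)$, integration by parts (or rather the defining property of the discrete/continuous gradient-divergence adjointness on the $H_0(\div)$ side) gives $(\bm{B}_{h},\nabla\phi_{h}) = -(\nabla\cdot\bm{B}_{h},\phi_{h}) = 0$ because $\nabla\cdot\bm{B}_{h}=0$ exactly; note $\phi_{h}\in H^{h}_{0}(\grad,\Omega)\subset L^{2}(\Omega)$ so the pairing makes sense, and the boundary term vanishes because $\phi_{h}$ has vanishing trace. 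Combining, $(\nabla_{h}\cdot(\mathbb{Q}_{h}^{\curl}\bm{B}_{h}),\phi_{h})=0$ for all $\phi_{h}\in H^{h}_{0}(\grad,\Omega)$, and since $\nabla_{h}\cdot(\mathbb{Q}_{h}^{\curl}\bm{B}_{h})$ lives in $H^{h}_{0}(\grad,\Omega)$, taking $\phi_{h}$ equal to it forces $\nabla_{h}\cdot(\mathbb{Q}_{h}^{\curl}\bm{B}_{h})=0$.

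There is essentially no obstacle here; the only point that needs a moment of care is the identity $\mathbb{Q}_{h}^{\curl}\nabla\phi_{h}=\nabla\phi_{h}$, which relies on the fact that the discrete de~Rham spaces form a genuine subcomplex so that $\nabla\phi_{h}\in H^{h}_{0}(\curl,\Omega)$ already — this is built into the finite element exterior calculus framework recalled earlier in the paper. Alternatively, one can avoid even that step by writing $(\mathbb{Q}_{h}^{\curl}\bm{B}_{h},\nabla\phi_{h})$ and noting $\nabla\phi_{h}$ is in the range of $\grad$ within the discrete complex; either way the argument is a short chain of adjointness identities.
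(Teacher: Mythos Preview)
Your proof is correct and follows essentially the same route as the paper's. The paper's version is simply the compressed form of yours: it directly tests with $\phi_{h}=\nabla_{h}\cdot\bm{H}_{h}$ (where $\bm{H}_{h}=\mathbb{Q}_{h}^{\curl}\bm{B}_{h}$) to compute $\|\nabla_{h}\cdot\bm{H}_{h}\|^{2}$, while you first establish $(\nabla_{h}\cdot(\mathbb{Q}_{h}^{\curl}\bm{B}_{h}),\phi_{h})=0$ for arbitrary $\phi_{h}$ and then specialize; the chain of adjointness identities and the use of $\nabla\phi_{h}\in H_{0}^{h}(\curl,\Omega)$ are identical in both.
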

\begin{proof}
Denote $\bm{H}_{h}:=\mathbb{Q}_{h}^{\curl}\bm{B}_{h}$. 
\begin{align*}
(\nabla_{h}\cdot \bm{H}_{h}, \nabla_{h}\cdot \bm{H}_{h})&=-(\nabla \nabla_{h}\cdot \bm{H}_{h}, \bm{H}_{h})=-(\nabla \nabla_{h}\cdot \bm{H}_{h}, \bm{B}_{h})\\&=(\nabla_{h}\cdot \bm{H}_{h}, \nabla\cdot \bm{B}_{h})=0.
\end{align*}
\end{proof}
For functions in $\bm{Z}_{h}$ and $H^{h}_{0}(\div0, \Omega)$, we recall the following Gaffney type inequalities \cite{he2019generalized}:  there exist positive constants $C$ such that
\begin{align}\label{poincare-1}
\|\bm{v}_{h}\|_{L^{3+\delta}}\leq C\|\nabla\times \bm{v}_{h}\|, \quad \forall \bm{v}_{h}\in \bm{Z}_{h}, 
\end{align}
\begin{align}\label{poincare-2}
\|\bm{B}_{h}\|_{L^{3+\delta}}\leq C\|\nabla_{h}\times \bm{B}_{h}\|, \quad \forall \bm{B}_{h}\in H^{h}_{0}(\div0). 
\end{align}
Here $\delta\in (0, 3]$ is a positive number depending on the regularity of the domain $\Omega\subset \mathbb{R}^{3}$. If $\Omega$ is convex or has $C^{1, 1}$ boundary, we can choose $\delta=3$. In the sequel, we assume that $\Omega$ is such that we can choose $\delta =1$, i.e., $\bm{Z}_{h}\hookrightarrow L^{4}(\Omega)$. 

{In the analysis below, we slightly modify the diffusion term in equations \eqref{main:eq3d} and assume that all the variables at the $n$-th time step are zero to avoid dealing with the cross terms between the $n$-th and the $(n+1)$-th time steps. The same analysis works for, e.g., a backward Euler time discretization.}

Consider the following variational form: find $(\bm{u}_{h}, \bm{B}_{h})\in \bm{Z}_{h}\times H^{h}_{0}(\div0, \Omega)$, such that for any $(\bm{v}_{h}, \bm{C}_{h})\in \bm{Z}_{h}\times H^{h}_{0}(\div0, \Omega)$,
\begin{align}\nonumber
(\Delta t)^{-1}&(\bm{u}_{h}, \bm{v}_{h})-(\bm{u}_{h}\times \mathbb{Q}_{h}^{\curl}(\nabla\times \bm{u}_{h}), \bm{v}_{h})+R_{e}^{-1}(\nabla\times \bm{u}_{h}, \nabla\times \bm{v}_{h})\\\label{reduced-1}
&\quad\quad-\text{c}((\nabla_{h}\times \bm{B}_{h})\times  \mathbb{Q}_{h}^{\curl}\bm{B}_{h}, \bm{v}_{h})=(\bm{F},  \bm{v}_{h}), \\\nonumber
(\Delta t)^{-1}(\bm{B}_{h}, \bm{C}_{h})-&(\bm{u}_{h}\times \mathbb{Q}_{h}^{\curl}\bm{B}_{h}, \nabla_{h}\times \bm{C}_{h})+1/2R_{m}^{-1}(\nabla\times \mathbb{Q}_{h}^{\curl}\bm{B}_{h}, \nabla\times \mathbb{Q}_{h}^{\curl} \bm{C}_{h})\\ 
\label{reduced-2} &\quad\quad\quad+\frac{1}{2}R_{m}^{-1}(\nabla_{h}\times \bm{B}_{h}, \nabla_{h}\times \bm{C}_{h})=(\bm{G}, \bm{C}_{h}).
\end{align}
Here $\bm{F}$ and $\bm{G}$ denote some general source terms. 

Comparing with \eqref{main:eq3d},  we modified the magnetic diffusion term in \eqref{reduced-2} by changing $\frac{1}{2}R_{m}^{-1}(\nabla_{h}\times \bm{B}_{h}, \nabla_{h}\times \bm{C}_{h})$ to $\frac{1}{2}R_{m}^{-1}(\nabla\times \mathbb{Q}_{h}^{\curl}\bm{B}_{h}, \nabla\times \mathbb{Q}_{h}^{\curl} \bm{C}_{h})$.

We write \eqref{reduced-1}-\eqref{reduced-2} in the following standard form: find $(\bm{u}_{h}, \bm{B}_{h})\in \bm{Z}_{h}\times H^{h}_{0}(\div0, \Omega)$, such that for any $(\bm{v}_{h}, \bm{C}_{h})\in \bm{Z}_{h}\times H^{h}_{0}(\div0, \Omega)$, 
\begin{equation}\label{trilinear}
a((\bm{u}_{h}, \bm{B}_{h}), (\bm{u}_{h}, \bm{B}_{h}); (\bm{v}_{h}, \bm{C}_{h})) =((\bm{F}, \text{c}\bm{G}), (\bm{v}_{h}, \bm{C}_{h})),
\end{equation}
where the trilinear form $a(\cdot; \cdot, \cdot)$ is defined by 
{
\begin{align*}
a((\bm{w}_{h}, \bm{K}_{h}), (\bm{u}_{h}, &\bm{B}_{h}); (\bm{v}_{h}, \bm{C}_{h})) :=(\Delta t)^{-1}(\bm{u}_{h}, \bm{v}_{h})-(\bm{w}_{h}\times \mathbb{Q}_{h}^{\curl}(\nabla\times \bm{u}_{h}), \bm{v}_{h})\\
&+R_{e}^{-1}(\nabla\times \bm{u}_{h}, \nabla\times \bm{v}_{h})-\text{c}((\nabla_{h}\times \bm{B}_{h})\times  \mathbb{Q}_{h}^{\curl}\bm{K}_{h}, \bm{v}_{h})+(\Delta t)^{-1}\text{c}(\bm{B}_{h}, \bm{C}_{h})\\&-\text{c}(\bm{w}_{h}\times \mathbb{Q}_{h}^{\curl}\bm{B}_{h}, \nabla_{h}\times \bm{C}_{h})+\frac{1}{2}\text{c}R_{m}^{-1}(\nabla_{h}\times \bm{B}_{h}, \nabla_{h}\times \bm{C}_{h})\\&+\frac{1}{2}\text{c}R_{m}^{-1}( \nabla\times \mathbb{Q}_{h}^{\curl}\bm{B}_{h}, \nabla_{h}\times \mathbb{Q}_{h}^{\curl} \bm{C}_{h}).
\end{align*}}
Introduce the following norm:
\begin{eqnarray*}
\|(\bm{u}_{h}, \bm{B}_{h})\|_{V}^{2} &:=& (\Delta t)^{-1}\|\bm{u}_{h}\|^{2}+(\Delta t)^{-1}\text{c}\|\bm{B}_{h}\|^{2} +R_{e}^{-1}\|\nabla\times \bm{u}_{h}\|^{2}\\ 
&&\quad + \text{c}R_{m}^{-1}\| \nabla_{h}\times \bm{B}_{h}\|^{2}+\text{c}R_{m}^{-1}\| \nabla\times \mathbb{Q}_{h}^{\curl}\bm{B}_{h}\|^{2}.
\end{eqnarray*} 
From Lemma \ref{lem:divh0}, \eqref{poincare-1} and \eqref{poincare-2}, we can further bound the following terms by the $\|\cdot\|_{V}$ norm:
$$
\|\bm{u}_{h}\|_{L^{4}}+\|\bm{B}_{h}\|_{L^{4}}+\|\mathbb{Q}_{h}^{\curl}\bm{B}_{h}\|_{L^{4}}\leq C\|(\bm{u}_{h}, \bm{B}_{h})\|_{V}.
$$

We include the existence theorem for nonlinear variational forms, which is given in, for example, \cite{Girault.V;Raviart.P.1986a}. Since we focus on the discrete level, we only state the results for finite dimensional problems.
\begin{theorem}\label{thm:nonlinear-monotone}
Assume that $\bm{V}$ is a finite dimensional vector space, and there exists a positive constant $\alpha$ such that a bounded trilinear form $a(\cdot;\cdot,\cdot)$ on $\bm{V}$ satisfies
$$
a(\bm{v}; \bm{v}, \bm{v})\geq \alpha \|\bm{v}\|^{2}, \quad \forall \bm{v}\in \bm{V}.
$$
Then the problem: given $\bm{f}\in \bm{V}^{\ast}$, find $\bm{u}\in \bm{V}$, such that for all $\bm{v}\in \bm{V}$, 
$$
a(\bm{u}; \bm{u}, \bm{v})=\bm{f}(\bm{v}),
$$ 
has at least one solution.
\end{theorem}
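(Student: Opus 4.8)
The plan is to recast the nonlinear variational problem as the problem of finding a zero of a continuous self-map of $\bm{V}$ and then to apply a standard consequence of Brouwer's fixed point theorem. Since $\bm{V}$ is finite dimensional, fix an inner product $(\cdot,\cdot)_{\bm{V}}$ inducing the norm $\|\cdot\|$, and use the Riesz representation to define $\Phi:\bm{V}\to\bm{V}$ by
$$
(\Phi(\bm{w}),\bm{v})_{\bm{V}}:=a(\bm{w};\bm{w},\bm{v})-\bm{f}(\bm{v}),\qquad\forall\,\bm{v}\in\bm{V}.
$$
A solution of the stated problem is precisely a zero of $\Phi$, so it suffices to show that $\Phi$ vanishes somewhere.

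First I would check that $\Phi$ is continuous. For each fixed $\bm{v}$, the map $\bm{w}\mapsto a(\bm{w};\bm{w},\bm{v})$ is a quadratic polynomial in the coordinates of $\bm{w}$ because $a$ is trilinear; combined with the boundedness of $a$ and the equivalence of norms on the finite dimensional space $\bm{V}$, this shows $\bm{w}\mapsto\Phi(\bm{w})$ is continuous (indeed locally Lipschitz). Next I would establish coercivity of $\Phi$ on large spheres: by the hypothesis on $a$,
$$
(\Phi(\bm{v}),\bm{v})_{\bm{V}}=a(\bm{v};\bm{v},\bm{v})-\bm{f}(\bm{v})\geq\alpha\|\bm{v}\|^{2}-\|\bm{f}\|_{\bm{V}^{\ast}}\|\bm{v}\|=\|\bm{v}\|\bigl(\alpha\|\bm{v}\|-\|\bm{f}\|_{\bm{V}^{\ast}}\bigr),
$$
so, setting $R:=\alpha^{-1}\|\bm{f}\|_{\bm{V}^{\ast}}+1$, we obtain $(\Phi(\bm{v}),\bm{v})_{\bm{V}}>0$ whenever $\|\bm{v}\|=R$.

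Finally I would invoke the topological lemma (a corollary of Brouwer's theorem, see \cite{Girault.V;Raviart.P.1986a}): if $\Phi$ is continuous on the closed ball $\bar{B}_{R}=\{\bm{v}\in\bm{V}:\|\bm{v}\|\leq R\}$ and $(\Phi(\bm{v}),\bm{v})_{\bm{V}}>0$ for all $\bm{v}$ with $\|\bm{v}\|=R$, then $\Phi$ has a zero in $\bar{B}_{R}$. For completeness I would include its short proof by contradiction: if $\Phi$ never vanishes on $\bar{B}_{R}$, then $\bm{v}\mapsto-R\,\Phi(\bm{v})/\|\Phi(\bm{v})\|$ is a continuous map of $\bar{B}_{R}$ into itself, hence has a fixed point $\bm{v}_{0}$ by Brouwer, which must satisfy $\|\bm{v}_{0}\|=R$; pairing the fixed-point identity with $\bm{v}_{0}$ then forces $(\Phi(\bm{v}_{0}),\bm{v}_{0})_{\bm{V}}=-\|\bm{v}_{0}\|^{2}/R=-R<0$, contradicting the sphere estimate. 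Applying the lemma yields $\bm{u}\in\bar{B}_{R}$ with $\Phi(\bm{u})=0$, i.e. $a(\bm{u};\bm{u},\bm{v})=\bm{f}(\bm{v})$ for all $\bm{v}\in\bm{V}$.

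I do not anticipate a genuine obstacle: the one step needing a little care is the passage through Brouwer's theorem (constructing the auxiliary self-map and tracking signs), and — as a byproduct worth recording — the argument also gives the a priori bound $\|\bm{u}\|\leq R=\alpha^{-1}\|\bm{f}\|_{\bm{V}^{\ast}}+1$. Uniqueness is not asserted, and for the trilinear form in \eqref{trilinear} it indeed cannot be expected without a smallness condition on the data relative to the viscosity parameters.
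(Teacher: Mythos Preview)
Your proof is correct in substance. Note that the paper itself does not prove this theorem: it merely states the result and cites \cite{Girault.V;Raviart.P.1986a}, so your argument is not being compared against an in-paper proof but rather supplies what the authors chose to quote. The route you take --- defining the Riesz representative $\Phi$, checking continuity and the sign of $(\Phi(\bm{v}),\bm{v})$ on a large sphere, and then invoking the Brouwer-type lemma --- is precisely the standard argument one finds in Girault--Raviart, so there is no methodological divergence to discuss.

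One small arithmetic slip worth fixing: in the contradiction step, pairing the fixed-point identity $\bm{v}_{0}=-R\,\Phi(\bm{v}_{0})/\|\Phi(\bm{v}_{0})\|$ with $\bm{v}_{0}$ actually gives
\[
(\Phi(\bm{v}_{0}),\bm{v}_{0})_{\bm{V}}=-\frac{\|\bm{v}_{0}\|^{2}\,\|\Phi(\bm{v}_{0})\|}{R}=-R\,\|\Phi(\bm{v}_{0})\|,
\]
not $-R$ as you wrote. The conclusion (negativity, hence contradiction) is unaffected. Also, the a priori bound you record as a byproduct can be sharpened to $\|\bm{u}\|\leq\alpha^{-1}\|\bm{f}\|_{\bm{V}^{\ast}}$ by testing the solved equation with $\bm{v}=\bm{u}$ and using the coercivity hypothesis directly; the extra $+1$ in your $R$ was only needed to make the sphere inequality strict.
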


\begin{lemma}
The trilinear form $a(\cdot, \cdot, \cdot)$ is bounded, i.e., there exists a positive constant $C$ such that
$$
\left|a((\bm{u}_{h}, \bm{B}_{h}); (\bm{v}_{h}, \bm{C}_{h}), (\bm{w}_{h}, \bm{K}_{h}))\right |\leq C \|(\bm{u}_{h}, \bm{B}_{h})\|_{V}\|(\bm{v}_{h}, \bm{C}_{h})\|_{V}\|(\bm{w}_{h}, \bm{K}_{h})\|_{V}.
$$
\end{lemma}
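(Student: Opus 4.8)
The plan is to bound the trilinear form term by term, classifying its eight summands into three groups and handling each group by a uniform strategy. First I would dispose of the \emph{linear-in-each-argument "mass" and "diffusion" terms}, namely $(\Delta t)^{-1}(\bm{u}_{h},\bm{v}_{h})$, $(\Delta t)^{-1}\text{c}(\bm{B}_{h},\bm{C}_{h})$, $R_{e}^{-1}(\nabla\times \bm{u}_{h},\nabla\times\bm{v}_{h})$, $\frac{1}{2}\text{c}R_{m}^{-1}(\nabla_{h}\times\bm{B}_{h},\nabla_{h}\times\bm{C}_{h})$, and $\frac{1}{2}\text{c}R_{m}^{-1}(\nabla\times\mathbb{Q}_{h}^{\curl}\bm{B}_{h},\nabla_{h}\times\mathbb{Q}_{h}^{\curl}\bm{C}_{h})$ (for the last one, using also $\|\nabla_{h}\times\mathbb{Q}_{h}^{\curl}\bm{C}_{h}\|\le C\|\nabla_{h}\times\bm{C}_{h}\|$, which follows from the identity $(\mathbb{Q}_{h}^{\curl}\nabla\times\bm{H}_{h},\mathbb{Q}_{h}^{\curl}\nabla\times\bm{H}_{h})=(\mathbb{Q}_{h}^{\curl}\nabla\times\bm{H}_{h},\nabla_{h}\times\bm{B}_{h})$ noted in the deleted remark). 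Each of these is bounded by Cauchy--Schwarz directly against two of the five squared quantities appearing in $\|\cdot\|_{V}^{2}$; since these terms do not involve the "frozen" first argument $(\bm{w}_{h},\bm{K}_{h})$ at all, bounding them by $C\|(\bm{u}_{h},\bm{B}_{h})\|_{V}\|(\bm{v}_{h},\bm{C}_{h})\|_{V}\le C\|(\bm{u}_{h},\bm{B}_{h})\|_{V}\|(\bm{v}_{h},\bm{C}_{h})\|_{V}\|(\bm{w}_{h},\bm{K}_{h})\|_{V}/\|(\bm{w}_{h},\bm{K}_{h})\|_{V}$ is immediate — more cleanly, one simply observes $|T|\le C\,\alpha\beta$ with $\alpha,\beta$ two of the $\|\cdot\|_{V}$-factors, which suffices since $\|\cdot\|_{V}$ controls each factor.

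The substantive work is in the \emph{three advection (genuinely trilinear) terms}: $(\bm{w}_{h}\times\mathbb{Q}_{h}^{\curl}(\nabla\times\bm{u}_{h}),\bm{v}_{h})$, $\text{c}((\nabla_{h}\times\bm{B}_{h})\times\mathbb{Q}_{h}^{\curl}\bm{K}_{h},\bm{v}_{h})$, and $\text{c}(\bm{w}_{h}\times\mathbb{Q}_{h}^{\curl}\bm{B}_{h},\nabla_{h}\times\bm{C}_{h})$. For each I would apply a Hölder split with exponents $(4,4,2)$: e.g.
\begin{align*}
\left|(\bm{w}_{h}\times\mathbb{Q}_{h}^{\curl}(\nabla\times\bm{u}_{h}),\bm{v}_{h})\right|
&\le \|\bm{w}_{h}\|_{L^{4}}\,\|\mathbb{Q}_{h}^{\curl}(\nabla\times\bm{u}_{h})\|_{L^{2}}\,\|\bm{v}_{h}\|_{L^{4}}\\
&\le C\,\|\bm{w}_{h}\|_{L^{4}}\,\|\nabla\times\bm{u}_{h}\|\,\|\bm{v}_{h}\|_{L^{4}},
\end{align*}
where in the second step I use that $\mathbb{Q}_{h}^{\curl}$ is an $L^{2}$ projection, hence $L^{2}$-nonexpansive. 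Then I invoke the embedding estimate recorded just before the lemma, $\|\bm{u}_{h}\|_{L^{4}}+\|\bm{B}_{h}\|_{L^{4}}+\|\mathbb{Q}_{h}^{\curl}\bm{B}_{h}\|_{L^{4}}\le C\|(\bm{u}_{h},\bm{B}_{h})\|_{V}$ (itself a consequence of Lemma \ref{lem:divh0}, \eqref{poincare-1} and \eqref{poincare-2} under the standing assumption $\delta=1$), to turn the three $L^{4}$ and one $L^{2}$ factors into $\|(\bm{w}_{h},\bm{K}_{h})\|_{V}$, $\|(\bm{u}_{h},\bm{B}_{h})\|_{V}$ (from $\|\nabla\times\bm{u}_{h}\|\le R_{e}^{1/2}\cdot R_{e}^{-1/2}\|\nabla\times\bm{u}_{h}\|\le R_{e}^{1/2}\|(\bm{u}_{h},\bm{B}_{h})\|_{V}$, absorbing the $R_{e}$-dependent constant into $C$), and $\|(\bm{v}_{h},\bm{C}_{h})\|_{V}$ respectively. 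The second advection term is handled identically with $\nabla_{h}\times\bm{B}_{h}$ playing the $L^{2}$ role (controlled by $\text{c}^{-1/2}R_{m}^{1/2}\|\cdot\|_{V}$) and $\mathbb{Q}_{h}^{\curl}\bm{K}_{h}$, $\bm{v}_{h}$ the $L^{4}$ roles; the third with $\bm{w}_{h}$, $\mathbb{Q}_{h}^{\curl}\bm{B}_{h}$ in $L^{4}$ and $\nabla_{h}\times\bm{C}_{h}$ in $L^{2}$.

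Summing the eight bounds and absorbing all mesh-independent but $\Delta t$-, $R_{e}$-, $R_{m}$-, $\text{c}$-dependent constants into a single $C$ yields the claim. The only real obstacle is conceptual rather than technical: it is essential that the modified diffusion in \eqref{reduced-2} supplies control of \emph{both} $\nabla_{h}\times\bm{B}_{h}$ and $\nabla\times\mathbb{Q}_{h}^{\curl}\bm{B}_{h}$ in the $\|\cdot\|_{V}$ norm, because the advection term $\text{c}(\bm{w}_{h}\times\mathbb{Q}_{h}^{\curl}\bm{B}_{h},\nabla_{h}\times\bm{C}_{h})$ needs the $L^{4}$-bound on $\mathbb{Q}_{h}^{\curl}\bm{B}_{h}$, which via \eqref{poincare-2} and Lemma \ref{lem:divh0} comes precisely from $\nabla_{h}\times(\mathbb{Q}_{h}^{\curl}\bm{B}_{h})$ being controlled — and the Gaffney inequality \eqref{poincare-2} is applicable only because $\mathbb{Q}_{h}^{\curl}\bm{B}_{h}\in H_{0}^{h}(\div0)$ by Lemma \ref{lem:divh0}. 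Everything else is a routine Hölder bookkeeping once the $L^{4}$ embeddings are in hand.
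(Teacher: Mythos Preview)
Your approach is essentially the paper's: bound the genuinely trilinear advection terms by a H\"older $(4,4,2)$ split, use the $L^{2}$-nonexpansivity of $\mathbb{Q}_{h}^{\curl}$ on the middle factor, and then invoke the discrete Gaffney embeddings to pass from $L^{4}$ to the $\|\cdot\|_{V}$ norm. The paper's proof is in fact shorter than yours---it simply says ``it suffices to bound the nonlinear terms'' and handles only those three, leaving the bilinear mass/diffusion pieces implicit.

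Two small slips to fix. First, your attempted justification for the bilinear terms is circular: you cannot get $|T|\le C\,\alpha\beta\gamma$ from $|T|\le C\,\alpha\beta$ alone (take $\gamma=0$). The lemma as literally stated is in fact not true for those terms; what is actually needed for the existence argument is that the bilinear part is bounded as a bilinear form and the trilinear part as a trilinear form, and the paper tacitly reads the statement that way. Second, in your final paragraph you invoke \eqref{poincare-2} for $\mathbb{Q}_{h}^{\curl}\bm{B}_{h}$, claiming it lies in $H_{0}^{h}(\div0,\Omega)$. It does not: $\mathbb{Q}_{h}^{\curl}\bm{B}_{h}\in H_{0}^{h}(\curl,\Omega)$, and Lemma~\ref{lem:divh0} places it in $\bm{Z}_{h}$, so the correct inequality is \eqref{poincare-1}, which bounds $\|\mathbb{Q}_{h}^{\curl}\bm{B}_{h}\|_{L^{4}}$ by $\|\nabla\times\mathbb{Q}_{h}^{\curl}\bm{B}_{h}\|$---precisely the quantity present in $\|\cdot\|_{V}$. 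The conclusion is unchanged.
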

\begin{proof}
It suffices to bound the nonlinear terms:
\begin{align*}
\left |(\bm{w}_{h}\times \mathbb{Q}_{h}^{\curl}(\nabla\times \bm{u}_{h}), \bm{v}_{h}) \right |&\leq \|\nabla\times \bm{u}_{h}\|\|\bm{w}_{h}\|_{L^{4}}\|\bm{v}_{h}\|_{L^{4}}\\
&\lesssim \|\nabla\times \bm{u}_{h}\|\|\nabla\times \bm{w}_{h}\|\|\nabla\times \bm{v}_{h}\|, 
\end{align*}
\begin{align*}
\left |((\nabla_{h}\times \bm{B}_{h})\times  \mathbb{Q}_{h}^{\curl}\bm{K}_{h}, \bm{v}_{h}) \right |&\leq \|\nabla_{h}\times \bm{B}_{h}\|\|\mathbb{Q}_{h}^{\curl}\bm{K}_{h}\|_{L^{4}}\|\bm{v}_{h}\|_{L^{4}}\\
&\lesssim \|\nabla_{h}\times \bm{B}_{h}\|\|\nabla\times \mathbb{Q}_{h}^{\curl}\bm{K}_{h}\|\|\nabla\times \bm{v}_{h}\|,
\end{align*}
and the estimate for $(\bm{w}_{h}\times \mathbb{Q}_{h}^{\curl}\bm{B}_{h}, \nabla_{h}\times \bm{C}_{h})$ is the same.

\end{proof}
\begin{lemma}
The trilinear form $a(\cdot, \cdot, \cdot)$ is coercive, i.e., there exists a positive constant $\alpha$ such that
$$
a((\bm{u}_{h}, \bm{B}_{h}); (\bm{u}_{h}, \bm{B}_{h}), (\bm{u}_{h}, \bm{B}_{h}))\geq \alpha \|(\bm{u}_{h}, \bm{B}_{h})\|_{V}^{2}.
$$
\end{lemma}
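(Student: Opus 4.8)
The plan is to insert $\bm{w}_{h}=\bm{u}_{h}$, $\bm{K}_{h}=\bm{B}_{h}$, $\bm{v}_{h}=\bm{u}_{h}$, $\bm{C}_{h}=\bm{B}_{h}$ into the definition of $a(\cdot;\cdot,\cdot)$ and inspect the resulting eight terms one by one. Five of them are the ``diagonal'' quadratic terms: $(\Delta t)^{-1}\|\bm{u}_{h}\|^{2}$, $R_{e}^{-1}\|\nabla\times\bm{u}_{h}\|^{2}$, $(\Delta t)^{-1}\textsf{c}\|\bm{B}_{h}\|^{2}$, $\tfrac{1}{2}\textsf{c}R_{m}^{-1}\|\nabla_{h}\times\bm{B}_{h}\|^{2}$, and $\tfrac{1}{2}\textsf{c}R_{m}^{-1}\|\nabla\times\mathbb{Q}_{h}^{\curl}\bm{B}_{h}\|^{2}$; together these already reproduce $\|(\bm{u}_{h},\bm{B}_{h})\|_{V}^{2}$ up to the factor $\tfrac12$ in front of the last two contributions.

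The remaining three terms are the nonlinear (convective and Lorentz-type) ones, and the heart of the argument is to show that they contribute nonnegatively — in fact, vanish. The fluid convection term is $-(\bm{u}_{h}\times\mathbb{Q}_{h}^{\curl}(\nabla\times\bm{u}_{h}),\bm{u}_{h})$, which is $0$ pointwise since $\bm{a}\times\bm{b}\perp\bm{a}$. For the two coupling terms, namely $-\textsf{c}\big((\nabla_{h}\times\bm{B}_{h})\times\mathbb{Q}_{h}^{\curl}\bm{B}_{h},\bm{u}_{h}\big)$ and $-\textsf{c}\big(\bm{u}_{h}\times\mathbb{Q}_{h}^{\curl}\bm{B}_{h},\nabla_{h}\times\bm{B}_{h}\big)$, I would use the cyclic invariance of the scalar triple product, $(\bm{a}\times\bm{b})\cdot\bm{c}=(\bm{b}\times\bm{c})\cdot\bm{a}=-(\bm{a}\times\bm{c})\cdot\bm{b}$, to rewrite the first integrand as $-\big(\bm{u}_{h}\times\mathbb{Q}_{h}^{\curl}\bm{B}_{h}\big)\cdot(\nabla_{h}\times\bm{B}_{h})$, so that the two terms are exact negatives of one another and cancel. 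This is precisely the discrete counterpart of the Lorentz-force / magnetic-advection cancellation underlying the energy law \eqref{energy-eq}, now carried over to the trilinear form.

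After these cancellations one is left with exactly
\[
a\big((\bm{u}_{h},\bm{B}_{h});(\bm{u}_{h},\bm{B}_{h}),(\bm{u}_{h},\bm{B}_{h})\big)
=(\Delta t)^{-1}\|\bm{u}_{h}\|^{2}+(\Delta t)^{-1}\textsf{c}\|\bm{B}_{h}\|^{2}
+R_{e}^{-1}\|\nabla\times\bm{u}_{h}\|^{2}
+\tfrac{1}{2}\textsf{c}R_{m}^{-1}\|\nabla_{h}\times\bm{B}_{h}\|^{2}
+\tfrac{1}{2}\textsf{c}R_{m}^{-1}\|\nabla\times\mathbb{Q}_{h}^{\curl}\bm{B}_{h}\|^{2},
\]
which is bounded below by $\tfrac{1}{2}\|(\bm{u}_{h},\bm{B}_{h})\|_{V}^{2}$, so the claim holds with $\alpha=\tfrac12$. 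I do not anticipate any real obstacle here; the only point requiring care is the sign bookkeeping in the triple-product rewriting that produces the cancellation of the two coupling terms, and checking that the factor $\tfrac12$ on the magnetic-diffusion pieces of the norm (introduced deliberately in \eqref{reduced-2}) is consistent with what the diagonal terms of $a$ deliver.
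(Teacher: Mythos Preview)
Your proposal is correct and follows essentially the same approach as the paper: substitute $(\bm{w}_{h},\bm{K}_{h})=(\bm{v}_{h},\bm{C}_{h})=(\bm{u}_{h},\bm{B}_{h})$, observe that the three nonlinear terms vanish (the convection term pointwise, the two coupling terms by the triple-product identity), and read off the five quadratic terms, yielding $\alpha=\tfrac12$. The paper's proof merely records the resulting identity without spelling out the cancellations; your write-up supplies exactly those details.
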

\begin{proof}
\begin{align*}
a((\bm{u}_{h}, \bm{B}_{h}); (\bm{u}_{h}, \bm{B}_{h}), (\bm{u}_{h},& \bm{B}_{h}))=(\Delta t)^{-1}\|\bm{u}_{h}\|^{2}+(\Delta t)^{-1}\text{c}\|\bm{B}_{h}\|^{2}+R_{e}^{-1}\|\nabla\times \bm{u}_{h}\|^{2}\\&
+\frac{1}{2}\text{c}R_{m}^{-1}\| \nabla_{h}\times \bm{B}_{h}\|^{2}+\frac{1}{2}\text{c}R_{m}^{-1}\| \nabla\times \mathbb{Q}_{h}^{\curl}\bm{B}_{h}\|^{2}.
\end{align*}
\end{proof}

We are now in a position to state the existence of the discrete variational form.
\begin{theorem}
For any $(\bm{F}, \bm{G})\in (\bm{Z}_{h})^{\ast}\times (H_{0}^{h}(\div0), \Omega)^{\ast}$, there exists at least one solution for \eqref{trilinear}. 
\end{theorem}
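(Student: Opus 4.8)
The plan is to invoke the abstract existence result Theorem~\ref{thm:nonlinear-monotone}, so essentially all the work has already been done in the two preceding lemmas. First I would take $\bm{V}$ to be the finite-dimensional space $\bm{Z}_{h}\times H^{h}_{0}(\div0,\Omega)$ equipped with the norm $\|\cdot\|_{V}$; this is legitimate since both factors are finite element spaces and hence finite-dimensional, and $\|\cdot\|_{V}$ is genuinely a norm on this space because the discrete Poincar\'e inequalities (used implicitly in the Gaffney-type bounds \eqref{poincare-1}--\eqref{poincare-2} and Lemma~\ref{lem:divh0}) control $\|\bm{u}_{h}\|$ and $\|\bm{B}_{h}\|$ by the seminorm contributions, so $\|(\bm{u}_{h},\bm{B}_{h})\|_{V}=0$ forces $(\bm{u}_{h},\bm{B}_{h})=0$.

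Next I would identify the right-hand side functional. The problem \eqref{trilinear} has right-hand side $((\bm{F},\text{c}\bm{G}),(\bm{v}_{h},\bm{C}_{h}))$, which defines a bounded linear functional on $\bm{V}$: boundedness follows because $\bm{F}\in(\bm{Z}_{h})^{\ast}$ and $\bm{G}\in (H^{h}_{0}(\div0,\Omega))^{\ast}$ pair against the $L^{2}$-parts of the test functions, and those $L^{2}$-norms are dominated by $\|\cdot\|_{V}$ (indeed by the $(\Delta t)^{-1}\|\cdot\|^{2}$ terms in the definition of $\|\cdot\|_{V}$). Hence $(\bm{F},\text{c}\bm{G})\in\bm{V}^{\ast}$, which is the data requirement of Theorem~\ref{thm:nonlinear-monotone}.

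Then I would simply assemble the two hypotheses of Theorem~\ref{thm:nonlinear-monotone} from the lemmas already proved: the trilinear form $a(\cdot;\cdot,\cdot)$ is bounded on $\bm{V}$ by the boundedness lemma, and it is coercive, i.e.\ $a(\bm{v};\bm{v},\bm{v})\geq\alpha\|\bm{v}\|_{V}^{2}$ with $\alpha=\min\{1,1/2\}=1/2$ (or any positive constant dominating the coercive quadratic terms), by the coercivity lemma, where the crucial point is that all the nonlinear convection/Lorentz terms are antisymmetric in the sense that they vanish when one sets all three arguments equal — specifically $(\bm{w}_{h}\times\mathbb{Q}_{h}^{\curl}(\nabla\times\bm{w}_{h}),\bm{w}_{h})=0$ pointwise, and the Lorentz term against $\bm{u}_{h}$ cancels the magnetic-advection term against $\bm{B}_{h}$ exactly as in the proof of the discrete energy law \eqref{energy-eq}. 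With both hypotheses verified, Theorem~\ref{thm:nonlinear-monotone} yields at least one solution $(\bm{u}_{h},\bm{B}_{h})\in\bm{V}$ of \eqref{trilinear}, which is the assertion.

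The only genuine subtlety — and the step I would be most careful about — is confirming that $\|\cdot\|_{V}$ is equivalent to the natural norm on $\bm{V}$ (so that ``coercive in $\|\cdot\|_{V}$'' really is coercivity in the sense Theorem~\ref{thm:nonlinear-monotone} requires, and boundedness of the data holds); this relies on the discrete Poincar\'e/Gaffney inequalities and on Lemma~\ref{lem:divh0} ensuring $\mathbb{Q}_{h}^{\curl}\bm{B}_{h}$ also lies in the $\nabla_{h}\cdot$-free subspace so that \eqref{poincare-1} applies to it. Everything else is bookkeeping: no uniqueness is claimed here (the convection terms preclude the usual monotonicity argument without a small-data assumption), so the proof ends once existence is produced.
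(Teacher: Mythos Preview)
Your proposal is correct and follows exactly the paper's approach: the paper simply states the theorem after establishing the boundedness and coercivity lemmas, implicitly invoking Theorem~\ref{thm:nonlinear-monotone} on the finite-dimensional space $\bm{Z}_{h}\times H^{h}_{0}(\div0,\Omega)$ with the $\|\cdot\|_{V}$ norm. One minor simplification: you need not appeal to Poincar\'e inequalities to see that $\|\cdot\|_{V}$ is a norm, since it already contains the $L^{2}$ terms $(\Delta t)^{-1}\|\bm{u}_{h}\|^{2}$ and $(\Delta t)^{-1}\text{c}\|\bm{B}_{h}\|^{2}$, which makes positive definiteness immediate.
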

The uniqueness of solutions of \eqref{trilinear} with small data follows from standard argument, c.f., \cite{Girault.V;Raviart.P.1986a}. 

\bibliographystyle{siam}      
\bibliography{helicity}{}   

\begin{thebibliography}{10}

\bibitem{alnaes2015fenics}
{\sc M.~S. Aln{\ae}s, J.~Blechta, J.~Hake, A.~Johansson, B.~Kehlet, A.~Logg,
  C.~Richardson, J.~Ring, M.~E. Rognes, and G.~N. Wells}, {\em The {FEniCS}
  project version 1.5}, Archive of Numerical Software, 3 (2015), pp.~9--23.

\bibitem{arnold2018finite}
{\sc D.~N. Arnold}, {\em Finite element exterior calculus}, vol.~93, SIAM,
  2018.

\bibitem{Arnold.D;Falk.R;Winther.R.2006a}
{\sc D.~N. Arnold, R.~S. Falk, and R.~Winther}, {\em {Finite element exterior
  calculus, homological techniques, and applications}}, Acta numerica, 15
  (2006), p.~1.

\bibitem{arnold1999topological}
{\sc V.~I. Arnold and B.~A. Khesin}, {\em {Topological methods in
  hydrodynamics}}, vol.~125, Springer Science \& Business Media, 1999.

\bibitem{berger1984topological}
{\sc M.~A. Berger and G.~B. Field}, {\em The topological properties of magnetic
  helicity}, Journal of Fluid Mechanics, 147 (1984), pp.~133--148.

\bibitem{Boffi.D;Brezzi.F;Fortin.M.2013a}
{\sc D.~Boffi, F.~Brezzi, and M.~Fortin}, {\em {Mixed Finite Element Methods
  and Applications}}, Springer, 2013.

\bibitem{Bossavit.A.1998a}
{\sc A.~Bossavit}, {\em Computational Electromagnetism}, Academic Press
  (Boston), 1998.

\bibitem{brackbill1980effect}
{\sc J.~U. Brackbill and D.~C. Barnes}, {\em The effect of nonzero
  {$\nabla\cdot B$} on the numerical solution of the magnetohydrodynamic
  equations}, Journal of Computational Physics, 35 (1980), pp.~426--430.

\bibitem{buffa2011isogeometric}
{\sc A.~Buffa, J.~Rivas, G.~Sangalli, and R.~V{\'a}zquez}, {\em Isogeometric
  discrete differential forms in three dimensions}, SIAM Journal on Numerical
  Analysis, 49 (2011), pp.~818--844.

\bibitem{cantarella1999influence}
{\sc J.~Cantarella, D.~DeTurck, H.~Gluck, and M.~Teytel}, {\em Influence of
  geometry and topology on helicity}, Geophysical Monograph-American
  Geophysical Union, 111 (1999), pp.~17--24.

\bibitem{frisch1975possibility}
{\sc U.~Frisch, A.~Pouquet, J.~L{\'e}orat, and A.~Mazure}, {\em Possibility of
  an inverse cascade of magnetic helicity in magnetohydrodynamic turbulence},
  Journal of Fluid Mechanics, 68 (1975), pp.~769--778.

\bibitem{gawlik2019variational}
{\sc E.~S. Gawlik and F.~Gay-Balmaz}, {\em A variational finite element
  discretization of compressible flow}, arXiv preprint arXiv:1910.05648,
  (2019).

\bibitem{gawlik2020conservative}
\leavevmode\vrule height 2pt depth -1.6pt width 23pt, {\em {A conservative
  finite element method for the incompressible Euler equations with variable
  density}}, Journal of Computational Physics,  (2020), p.~109439.

\bibitem{girault1990curl}
{\sc V.~Girault}, {\em {Curl-conforming finite element methods for
  Navier-Stokes equations with non-standard boundary conditions in
  $\mathbb{R}^{3}$}}, in The Navier-Stokes Equations Theory and Numerical
  Methods, Springer, 1990, pp.~201--218.

\bibitem{Girault.V;Raviart.P.1986a}
{\sc V.~Girault and P.~Raviart}, {\em {Finite Element Methods for Navier-Stokes
  Equations: Theory and Algorithms}}, Springer, 1986.

\bibitem{gunzburger1991existence}
{\sc M.~D. Gunzburger, A.~J. Meir, and J.~S. Peterson}, {\em {On the existence,
  uniqueness, and finite element approximation of solutions of the equations of
  stationary, incompressible magnetohydrodynamics}}, Mathematics of
  Computation, 56 (1991), pp.~523--563.

\bibitem{hairer2006geometric}
{\sc E.~Hairer, C.~Lubich, and G.~Wanner}, {\em {Geometric numerical
  integration: structure-preserving algorithms for ordinary differential
  equations}}, vol.~31, Springer Science \& Business Media, 2006.

\bibitem{he2019generalized}
{\sc J.~He, K.~Hu, and J.~Xu}, {\em {Generalized Gaffney inequality and
  discrete compactness for discrete differential forms}}, Numerische
  Mathematik, 143 (2019), pp.~781--795.

\bibitem{Hiptmair.R.2002a}
{\sc R.~Hiptmair}, {\em {Finite elements in computational electromagnetism}},
  Acta Numerica, 11 (2002), pp.~237--339.

\bibitem{hiptmair2018fully}
{\sc R.~Hiptmair, L.~Li, S.~Mao, and W.~Zheng}, {\em A fully divergence-free
  finite element method for magnetohydrodynamic equations}, Mathematical Models
  and Methods in Applied Sciences, 28 (2018), pp.~659--695.

\bibitem{hiptmair2018splitting}
{\sc R.~Hiptmair and C.~Pagliantini}, {\em Splitting-based structure preserving
  discretizations for magnetohydrodynamics}, The SMAI journal of computational
  mathematics, 4 (2018), pp.~225--257.

\bibitem{phdthesis}
{\sc K.~Hu}, {\em Finite element exterior calculus for multiphysics systems},
  PhD thesis, Peking University, 2017.

\bibitem{hu2014stable}
{\sc K.~Hu, Y.~Ma, and J.~Xu}, {\em {Stable finite element methods preserving
  $\nabla\cdot\bm{B}=0$ exactly for MHD models}}, Numerische Mathematik,
  (2014), pp.~1--26.

\bibitem{hu2019structure}
{\sc K.~Hu and J.~Xu}, {\em {Structure-preserving finite element methods for
  stationary MHD models}}, Mathematics of Computation, 88 (2019), pp.~553--581.

\bibitem{hughes1987new}
{\sc T.~J. Hughes and L.~P. Franca}, {\em A new finite element formulation for
  computational fluid dynamics: Vii. the stokes problem with various well-posed
  boundary conditions: symmetric formulations that converge for all
  velocity/pressure spaces}, Computer Methods in Applied Mechanics and
  Engineering, 65 (1987), pp.~85--96.

\bibitem{ipsen1998idea}
{\sc I.~C. Ipsen and C.~D. Meyer}, {\em The idea behind krylov methods}, The
  American mathematical monthly, 105 (1998), pp.~889--899.

\bibitem{kraus2017variational}
{\sc M.~Kraus and O.~Maj}, {\em Variational integrators for ideal
  magnetohydrodynamics}, arXiv preprint arXiv:1707.03227,  (2017).

\bibitem{lamb1932hydrodynamics}
{\sc H.~Lamb}, {\em {Hydrodynamics}}, Cambridge university press, 1932.

\bibitem{layton2008helicity}
{\sc W.~J. Layton, C.~C. Manica, M.~Neda, and L.~G. Rebholz}, {\em Helicity and
  energy conservation and dissipation in approximate deconvolution {LES} models
  of turbulence}, Advances and Applications in Fluid Mechanics, 4 (2008),
  pp.~1--46.

\bibitem{lee2008sharp}
{\sc Y.-J. Lee, J.~Wu, J.~Xu, and L.~Zikatanov}, {\em A sharp convergence
  estimate for the method of subspace corrections for singular systems of
  equations}, Mathematics of computation, 77 (2008), pp.~831--850.

\bibitem{liu2009introduction}
{\sc C.~Liu}, {\em An introduction of elastic complex fluids: an energetic
  variational approach}, in Multi-Scale Phenomena in Complex Fluids: Modeling,
  Analysis and Numerical Simulation, World Scientific, 2009, pp.~286--337.

\bibitem{liu2004energy}
{\sc J.-G. Liu and W.-C. Wang}, {\em Energy and helicity preserving schemes for
  hydro-and magnetohydro-dynamics flows with symmetry}, Journal of
  Computational Physics, 200 (2004), pp.~8--33.

\bibitem{ma2016robust}
{\sc Y.~Ma, K.~Hu, X.~Hu, and J.~Xu}, {\em {Robust preconditioners for
  incompressible MHD models}}, Journal of Computational Physics, 316 (2016),
  pp.~721--746.

\bibitem{maj2017mathematical}
{\sc O.~Maj}, {\em A mathematical introduction to magnetohydrodynamics}, in
  Vorlesung (SS 2017), 2017, pp.~1--222.

\bibitem{moffatt1981some}
{\sc H.~Moffatt}, {\em Some developments in the theory of turbulence}, Journal
  of Fluid Mechanics, 106 (1981), pp.~27--47.

\bibitem{moffatt1992helicity}
{\sc H.~Moffatt and A.~Tsinober}, {\em Helicity in laminar and turbulent flow},
  Annual review of fluid mechanics, 24 (1992), pp.~281--312.

\bibitem{moffatt2014helicity}
{\sc H.~K. Moffatt}, {\em Helicity and singular structures in fluid dynamics},
  Proceedings of the National Academy of Sciences, 111 (2014), pp.~3663--3670.

\bibitem{olshanskii2010note}
{\sc M.~Olshanskii and L.~G. Rebholz}, {\em {Note on helicity balance of the
  Galerkin method for the 3D Navier-Stokes equations}}, Computer Methods in
  Applied Mechanics and Engineering, 199 (2010), pp.~1032--1035.

\bibitem{perez2009role}
{\sc J.~C. Perez and S.~Boldyrev}, {\em Role of cross-helicity in
  magnetohydrodynamic turbulence}, Physical review letters, 102 (2009),
  p.~025003.

\bibitem{rebholz2007energy}
{\sc L.~G. Rebholz}, {\em An energy-and helicity-conserving finite element
  scheme for the {Navier-Stokes} equations}, SIAM Journal on Numerical
  Analysis, 45 (2007), pp.~1622--1638.

\bibitem{schotzau2004mixed}
{\sc D.~Sch{\"o}tzau}, {\em Mixed finite element methods for stationary
  incompressible magneto--hydrodynamics}, Numerische Mathematik, 96 (2004),
  pp.~771--800.

\bibitem{smiet2017knots}
{\sc C.~B. Smiet et~al.}, {\em Knots in plasma}, PhD thesis, Leiden University,
  2017.

\end{thebibliography}

\end{document}